\documentclass{article}
\input{macros-arXiv.sty}

\usepackage{url}            % simple URL typesetting
\usepackage{booktabs}       % professional-quality tables
\usepackage{amsfonts, bm, amssymb}       % blackboard math symbols
\usepackage{nicefrac}       % compact symbols for 1/2, etc.
\usepackage{microtype}      % microtypography
\usepackage{tikz} 
\usepackage{float}

\usepackage{array}
\usepackage{diagbox}
\usepackage{mathtools}
\usepackage{multirow, makecell}
\usepackage{rotating}
\usepackage{graphicx}
\usepackage{subfigure}
\usepackage{mathdots}

\usepackage{float}
\usepackage{caption}
\usepackage{setspace}
\linespread{1.1}

% For modification of page size
% \usepackage{geometry}
\usepackage[letterpaper, left=0.9in, right=0.9in, top=0.8in, bottom=1in]{geometry}

% Order matters, https://tex.stackexchange.com/questions/71664/why-are-all-of-my-footnotes-hyperlinked-to-the-titlepage
\usepackage[bottom]{footmisc}
\usepackage[colorlinks,linkcolor={blue!75!black},urlcolor=red,citecolor=ForestGreen]{hyperref}
\usepackage{footnotebackref}

% made for tablenotes
\usepackage{threeparttable}

% Table dash line
\usepackage{arydshln}

\usepackage{enumerate}
\usepackage{enumitem} 

% May cause the backward search feature to fail
% https://tex.stackexchange.com/questions/479146/how-to-solve-no-synchronization-info-at-this-position-issue#comment1210452_479146
% \usepackage[todonotes={textsize=footnotesize}]{changes}
% %\usepackage[final]{changes}
% \definechangesauthor[name={Per cusse}, color=red]{imp}

% If you use natbib package, activate the following three lines:
% \usepackage[round]{natbib}
\usepackage{natbib}

\title{\bf The Complexity of Nonconvex-Strongly-Concave\\
Minimax Optimization}

\author{
Siqi Zhang \footnote{Equal contribution.} \thanks{Coordinated Science Laboratory (CSL), Department of Industrial and Enterprise Systems Engineering (ISE), University of Illinois at Urbana-Champaign (UIUC), Urbana, IL, USA. \texttt{siqiz4@illinois.edu}, \texttt{junchiy2@illinois.edu}}
\and
Junchi Yang \footnotemark[1]\ \footnotemark[2]
\and
Crist\'{o}bal Guzm\'{a}n \thanks{Department of Applied Mathematics, University of Twente, The Netherlands. And 
Institute for Mathematical and Computational Engineering, Pontificia Universidad Cat\'olica de Chile. \texttt{c.a.guzmanparedes@utwente.nl}}
\and
Negar Kiyavash
\thanks{College of Management of Technology, EPFL, Switzerland. \texttt{negar.kiyavash@epfl.ch}}
\and
Niao He \thanks{Department of Computer Science, ETH Z\"urich, Switzerland. \texttt{niao.he@inf.ethz.ch}}
}

\begin{document}
\maketitle

\begin{abstract}
    This paper studies the complexity for finding approximate stationary points of \textit{nonconvex-strongly-concave (NC-SC)} smooth minimax problems, in both \emph{general} and  \emph{averaged smooth finite-sum} settings. We establish nontrivial lower complexity bounds of $\Omega(\sqrt{\kappa}\Delta L\epsilon^{-2})$ and $\Omega(n+\sqrt{n\kappa}\Delta L\epsilon^{-2})$ for the two settings, respectively, where $\kappa$ is the condition number, $L$ is the smoothness constant, and $\Delta$ is the initial gap. Our result reveals substantial gaps between these limits and best-known upper bounds in the literature. To  close these gaps, we introduce a \textit{generic acceleration scheme} that deploys existing gradient-based methods to solve a sequence of crafted strongly-convex-strongly-concave subproblems. In the general setting, the complexity of our proposed algorithm nearly matches the lower bound; in particular, it removes an additional poly-logarithmic dependence on accuracy present in previous works. In the averaged smooth finite-sum setting, our proposed algorithm improves over previous algorithms by providing a nearly-tight dependence on the condition number.
\end{abstract}

\section{Introduction}

In this paper, we consider general minimax problems of the form ($n, d_1, d_2\in\mathbb{N}^+$):
\begin{equation}\label{eq:main_problems}
    \min_{x\in \mathbb{R}^{d_1}}\max_{y\in \mathbb{R}^{d_2}}\ f(x,y),
\end{equation}
as well as their finite-sum counterpart:%the finite-sum structure:
\begin{equation}\label{eq:main_problems_2}
    \min_{x\in \mathbb{R}^{d_1}}\max_{y\in \mathbb{R}^{d_2}}\ f(x,y)\triangleq\frac{1}{n}\sum_{i=1}^{n}f_i(x,y),
\end{equation}
where $f,f_i$ are continuously differentiable and $f$ is $L$-Lipschitz smooth jointly in $x$ and $y$. We focus on the setting when the function $f$ is $\mu$-strongly concave in $y$ and perhaps nonconvex in $x$, i.e., $f$ is \textit{nonconvex-strongly-concave (NC-SC)}. Such problems arise ubiquitously in machine learning, e.g., GANs with regularization \citep{sanjabi2018convergence,lei2020sgd}, Wasserstein robust models \citep{sinha2018certifying}, robust learning over multiple domains \citep{qian2019robust}, and off-policy reinforcement learning \citep{dai2017learning,dai2018sbeed,huang2020convergence}. Since the problem is nonconvex in general, a natural goal is to find an approximate stationary point $\Bar{x}$, such that $\autonorm{\nabla\Phi(\Bar{x})}\leq\epsilon$, for a given accuracy $\epsilon$, where $\Phi(x)\triangleq\max_y f(x,y)$ is the primal function. This goal is meaningful for the aforementioned applications, e.g.,~in adversarial models the primal function quantifies the worst-case loss for the learner, with respect to adversary's actions.

There exists a number of algorithms for solving NC-SC problems in the general setting, including GDmax \citep{nouiehed2019solving}, GDA~\citep{lin2020gradient}, alternating GDA~\citep{yang2020global,boct2020alternating,xu2020unified}, Minimax-PPA~\citep{lin2020near}. 
Specifically, GDA and its alternating variant both achieve the complexity of $ O(\kappa^2 \Delta L\epsilon^{-2}) $  \citep{lin2020gradient,yang2020global}, where $\kappa\triangleq \frac{L}{\mu}$ is the condition number and $\Delta\triangleq\Phi(x_0)-\inf_{x}\Phi(x)$ is the initial function gap.  Recently, \citep{lin2020near} provided the best-known complexity of $ O\autopar{\sqrt{\kappa} \Delta L \epsilon^{-2}\cdot \log^2(\frac{\kappa L}{\epsilon})}$ achieved by Minimax-PPA, which improves the dependence on the condition number but suffers from an extra poly-logarithmic factor in $\frac{1}{\epsilon}$.

In the finite-sum setting, several algorithms have been proposed recently, e.g., SGDmax~\citep{jin2020local}, PGSMD~\citep{rafique2018non}, Stochastic GDA~\citep{lin2020gradient}, SREDA and its variants~\citep{luo2020stochastic,xu2020enhanced}. In particular, 
\citep{lin2020gradient} proved that Stochastic GDA attains the complexity of $O(\kappa^3\epsilon^{-4})$. \citep{luo2020stochastic} recently showed the state-of-the-art result achieved by SREDA: when $n \geq \kappa^2$, the complexity is $\tilde{O}\autopar{n\log\frac{\kappa}{\epsilon}+\sqrt{n}\kappa^2 \Delta L\epsilon^{-2}}$, which is sharper than the batch Minimax-PPA algorithm;  when $n \leq \kappa^2$, the complexity is $O\autopar{\autopar{n\kappa+\kappa^2} \Delta L\epsilon^{-2}}$, which is sharper than Stochastic GDA. 

\begin{table*}
	\centering
	\small
    % \footnotesize
	\renewcommand{\arraystretch}{1.5}
	\begin{threeparttable}[b]
		\begin{tabular}{l | c | c | c}
			\hline
			\hline
			\textbf{Setting} & \textbf{Our Lower Bound} & \textbf{Our Upper Bound} & \textbf{Previous Upper Bound}
            \\
			\hline 
			\hline
			\multirow{2}{*}{NC-SC, general}
			& \multirow{2}{*}{
    			\makecell[c]{
    			$ \Omega\big(\sqrt{\kappa} \Delta L\epsilon^{-2}\big) $\vspace{0.1em}\\
    			Theorem \ref{THM:LB_NCSC_DETER}
    			}
			}
			& \multirow{2}{*}{
    			\makecell[c]{$ \Tilde{O}(\sqrt{\kappa} \Delta L\epsilon^{-2}) $ \vspace{0.1em}\\ Section \ref{subsec algorithms}
    			}
			}
			& \multirow{2}{*}{
			    \makecell[c]{
			    $ O(\kappa^2 \Delta L\epsilon^{-2}) $ \citep{lin2020gradient}\vspace{0.25em}\\
			    $ \tilde{O}\autopar{\sqrt{\kappa} \Delta L \epsilon^{-2}\log^2\frac{1}{\epsilon}} $ \citep{lin2020near}
			    }
			}
			\\
			& & &
			\\
			\hline
			\multirow{3}{*}{NC-SC, FS, AS\tnote{1}\ \ }
			& \multirow{3}{*}{
			    \makecell[c]{
			    $ \Omega\autopar{n+\sqrt{n\kappa} \Delta L \epsilon^{-2}} $\vspace{0.1em}\\
			    Theorem \ref{THM:LB_NCSC_FS_AS}
			    }
			}
			& \multirow{3}{*}{\makecell[c]{$ \tilde{O}\autopar{\autopar{n+n^{\frac{3}{4}}\sqrt{\kappa}} \Delta L \epsilon^{-2}} $\vspace{0.1em} \\ Section \ref{subsec algorithms}}}
			& \multirow{3}{*}{
			    \makecell[c]{
			    $
    			\begin{cases}
    			    \tilde{O}(n+\sqrt{n}\kappa^2 \Delta L\epsilon^{-2}) & n \geq \kappa^2\\
    			    O\autopar{\autopar{n\kappa+\kappa^2} \Delta L\epsilon^{-2}} & n \leq \kappa^2
    			\end{cases}
    			$\vspace{0.1em}\\ 
    			\citep{luo2020stochastic,xu2020enhanced}
			    }
			}
			\\
			& & &
			\\
			& & &
			\\
			\hline
			\hline
		\end{tabular}
		\begin{tablenotes}
 			\item[1] FS: finite-sum, AS: averaged smooth; see Section \ref{SEC:MAIN_PRELIM} for definitions.
 		\end{tablenotes}
	\end{threeparttable}
	\caption{Upper and lower complexity bounds for finding an approximate stationary point. Here $\tilde{O}(\cdot)$ hides poly-logarithmic factor in $L,\mu$ and $\kappa$. $L$: Lipschitz smoothness parameter, $\mu$: strong concavity parameter, $\kappa$: condition number $\frac{L}{\mu}$; $\Delta$: initial gap of the primal function.}
    \label{table:summary_results}
\end{table*}

Despite this active line of research,  whether these state-of-the-art complexity bounds can be further improved remains elusive. 
As a special case by restricting the domain of $y$ to a singleton, lower bounds for nonconvex smooth minimization, e.g., \citep{carmon2019lower,carmon2019lowerII,fang2018spider,zhou2019lower,arjevani2019lower}, hardly capture the dependence on the condition number $\kappa$, which plays a crucial role in the complexity for general NC-SC smooth minimax problems. 
In many of the aforementioned machine learning applications, the condition number is often proportional to the inverse of the regularization parameter, and could be quite large in practice. For example, in statistical learning, where $n$ represents the sample size, the optimal regularization parameter (i.e.~with optimal empirical/generalization trade-off) leads to $\kappa=\Omega(\sqrt{n})$ \citep{shalev2014understanding}.

This motivates the following fundamental questions: \textit{What is the complexity limit for NC-SC problems in the general and finite-sum settings? Can we design new algorithms to meet the performance limits and attain optimal dependence on the condition number?}

\subsection{Contributions} 
\label{sec:contribution}
Our contributions, summarized in Table \ref{table:summary_results}, are  as follows:
\begin{itemize}
    \item We establish nontrivial lower complexity bounds for finding an approximate stationary point of nonconvex-strongly-concave (NC-SC) minimax problems. In the general setting, we prove an $ \Omega\big(\sqrt{\kappa} \Delta L\epsilon^{-2}\big) $ lower complexity bound which applies to arbitrary deterministic linear-span algorithms interacting with the classical first-order oracle. In the finite-sum setting, we prove an $ \Omega\autopar{n+\sqrt{n\kappa} \Delta L \epsilon^{-2}} $ lower complexity bound (when $\kappa=\Omega(n)$)\footnote{A concurrent work by \cite{han2021lower} appeared on arXiv two weeks ago, and provided a similar lower bound result for finite-sum NC-SC problems under probabilistic arguments based on geometric random variables.} for the class of {\em averaged smooth functions} and arbitrary linear-span algorithms interacting with a (randomized) incremental first-order oracle (precise definitions in Sections \ref{SEC:MAIN_PRELIM} and \ref{sec:main_LB_NCSC}). 
    
    Our lower bounds build upon two main ideas: first, we start from an NC-SC function whose primal function mimics the lower bound construction in smooth nonconvex minimization \citep{carmon2019lower}. Crucially, the smoothness parameter of this primal function is boosted by an $\Omega(\kappa)$ factor, which strengthens the lower bound. 
    Second, the NC-SC function has an alternating zero-chain structure, as utilized in lower bounds for convex-concave settings \citep{ouyang2019lower}. The combination of these features leads to a hard instance for our problem.

    \item To bridge the gap between the lower bounds and existing upper bounds in both settings, we introduce a generic Catalyst acceleration framework for NC-SC minimax problems, inspired by~\citep{lin2018catalyst,yang2020catalyst}, which applies existing gradient-based methods to solving a sequence of crafted strongly-convex-strongly-concave (SC-SC) minimax subproblems. When combined with the extragradient method, the resulting algorithm achieves an $ \tilde{O}(\sqrt{\kappa} \Delta L\epsilon^{-2}) $ complexity in terms of gradient evaluations, which tightly matches the lower bound in the general setting (up to logarithmic terms in constants) and shaves off the extra poly-logarithmic term in $\frac{1}{\epsilon}$ required by the state-of-the-art \citep{lin2020near}. When combined with stochastic variance-reduced method, the resulting algorithm achieves an overall $ \tilde{O}\autopar{(n+n^{3/4}\sqrt{\kappa}) \Delta L \epsilon^{-2}} $ complexity for averaged smooth finite-sum problems, which has nearly-tight dependence on the condition number and improves on the best-known upper bound when $n\leq \kappa^4$. 
\end{itemize}

\subsection{Related Work}

\paragraph{Lower bounds for minimax problems.} 
Information-based complexity (IBC) theory \citep{traub1988information}, which derives the minimal number of oracle calls to attain an approximate solution with a desired accuracy, is often used in lower bound analysis of optimization algorithms. Unlike the case of minimization \citep{blair1985problem,nesterov2018lectures,agarwal2009information,woodworth2016tight,foster2019complexity,carmon2019lower,carmon2019lowerII,fang2018spider,zhou2019lower,arjevani2019lower}, lower bounds for minimax optimization are far less understood; only a few recent works provided lower bounds for finding an approximate saddle point of (strongly)-convex-(strongly)-concave minimax problems \citep{ouyang2019lower,zhang2019lower, ibrahim2020linear,xie2020lower,yoon2021accelerated}. Instead, this paper considers lower bounds for NC-SC minimax problems of finding an approximate stationary point, which requires different techniques for constructing zero-chain properties. Note that there exists another line of research on the purely stochastic setting, e.g., \citep{rafique2018non,luo2020stochastic,xu2020enhanced}; constructing lower bounds in that setting is out of the scope of this paper.

\paragraph{Complexity of making gradient small.} 
In nonconvex optimization, most lower and upper complexity bound results are presented in terms of the gradient norm (see a recent survey \citep{danilova2020recent} and references therein for more details). For convex optimization, the optimality gap based on the objective value is commonly used as the convergence criterion. The convergence in terms of gradient norm, albeit easier to check, are far less studied in the literature until recently; see e.g., \citep{nesterov2012make,allen2018make,foster2019complexity,carmon2019lowerII, diakonikolasguzman2021} for convex minimization and \citep{diakonikolas2020halpern,diakonikolas2021potential,yoon2021accelerated} for convex-concave smooth minimax problems. 

\paragraph{Nonconvex minimax optimization.} 
In NC-SC setting, as we mentioned, there has been several substantial works. Among them, \citep{lin2020near} achieved the best dependency on condition number by combining proximal point algorithm with accelerated gradient descent. \citep{luo2020stochastic} introduced a variance reduction algorithm, SREDA, and \citep{xu2020enhanced} enhanced the analysis to allow bigger stepsize. \citep{yuan2021federated,guo2020communication} provided algorithms for NC-SC minimax formulation of AUC maximization problems with an additional assumption that the primal function satisfies Polyak-{\L}ojasiewicz condition. 
In addition, nonconvex-concave minimax optimization, i.e., the function $f$ is only concave in $y$, is extensively explored by \citep{zhang2020single, ostrovskii2020efficient, thekumparampil2019efficient, zhao2020primal,nouiehed2019solving,yang2020catalyst}.  Recently, \citep{daskalakis2020complexity} showed that for general smooth nonconvex-nonconcave objectives the computation of approximate first-order locally optimal solutions is intractable. Therefore, another line of research is devoted to searching for solutions under additional structural properties \citep{yang2020devil,zhou2017stochastic, yang2020global, song2020optimistic,mertikopoulos2019optimistic,malitsky2019golden,diakonikolas2020efficient, lin2018solving}.

\paragraph{Catalyst acceleration.} 
The catalyst framework was initially studied in \citep{lin2015universal} for convex minimization and extended to nonconvex minimization in \citep{paquette2018catalyst} to obtain accelerated algorithms. A similar idea to accelerate SVRG appeared in \citep{frostig2015regularizing}. These work are rooted on the proximal point algorithm (PPA) \citep{rockafellar1976monotone,guler1991convergence} and inexact accelerated PPA \citep{guler1992new}. Recently, \citep{yang2020catalyst} generalized the idea  and obtained state-of-the-art results for solving strongly-convex-concave and nonconvex-concave minimax problems. In contrast, this paper introduces a new catalyst acceleration scheme in the nonconvex-strongly-concave setting, which relies on completely different parameter settings and stopping criterion.

\section{Preliminaries} \label{SEC:MAIN_PRELIM}

\paragraph{Notations} 
Throughout the paper, we use $\dom F$ as the domain of a function $F$,   $\nabla F=\autopar{\nabla_x F, \nabla_y F}$ as the full gradient, $\autonorm{\cdot}$ as the $\ell_2$-norm. We use $0$ to represent zero vectors or scalars, $e_i$ to represent unit vector with the $i$-th element being $1$. For nonnegative functions $f(x)$ and $g(x)$, we say $f=O\autopar{g}$ if $f(x)\leq cg(x)$ for some $c>0$, and further write $f=\tilde{O}\autopar{g}$ to omit poly-logarithmic terms on constants $L,\mu$ and $\kappa$, while $f=\Omega\autopar{g}$ if $f(x)\geq cg(x)$ (see more in Appendix \ref{sec:Apdx_notations}).

We introduce definitions and assumptions used throughout.

\begin{definition}[Primal and Dual Functions]
    For a function $f(x,y)$, we define $\Phi(x)\triangleq\max_y f(x,y)$ as the primal function, and $\Psi(y)\triangleq\min_x f(x,y)$ as the dual function. We also define the primal-dual gap at a point $(\Bar{x},\Bar{y})$ as $
    \gap_f(\Bar{x}, \Bar{y}) \triangleq \max_{y \in\mathbb{R}^{d_2}} f(\Bar{x},y) - \min_{x \in \mathbb{R}^{d_1}} f(x,\Bar{y}).
$
\end{definition}

\begin{definition}[Lipschitz Smoothness]
    We say a function $f(x,y)$ is $L$-Lipschitz smooth ($L$-S) jointly in $x$ and $y$ if it is differentiable and for any $(x_1,y_1), (x_2,y_2)\in\mathbb{R}^{d_1}\times\mathbb{R}^{d_2}$, $\autonorm{\nabla_x f(x_1,y_1)-\nabla_x f(x_2,y_2)}\leq L(\|x_1-x_2\|+\|y_1-y_2\|)$ and $\autonorm{\nabla_y f(x_1,y_1)-\nabla_y f(x_2,y_2)}\leq L(\|x_1-x_2\|+\|y_1-y_2\|)$, for some $L>0$.
\end{definition}

\begin{definition}[Average / Individual Smoothness]
    \label{defn:AS_IS}
    We say $f(x,y)=\frac{1}{n}\sum_{i=1}^{n}f_i(x,y)$ or $\{f_i\}_{i=1}^n$ is $L$-averaged smooth ($L$-AS) if each $f_i$ is differentiable, and for any $(x_1,y_1), (x_2,y_2)\in\mathbb{R}^{d_1}\times\mathbb{R}^{d_2}$, we have
    \begin{equation}
        \begin{split}
            \frac{1}{n}\sum_{i=1}^n\autonorm{\nabla f_i(x_1,y_1)-\nabla f_i(x_2,y_2)}^2\leq
        	L^2\autopar{\autonorm{x_1-x_2}^2+\autonorm{y_1-y_2}^2}.
        \end{split}
    \end{equation}
    We say $f$ or $\autobigpar{f_i}_{i=1}^n$ is $L$-individually smooth ($L$-IS) if each $f_i$ is L-Lipschitz smooth.
\end{definition}

Average smoothness is a weaker condition than the common Lipschitz smoothness assumption of each component in finite-sum~/~stochastic minimization \citep{fang2018spider,zhou2019lower}. Similarly in minimax problems, the following proposition summarizes the relationship among these different notions of smoothness. 

\begin{prop} \label{prop regularized smooth} Let $f(x,y)=\frac1n\sum_{i=1}^n f_i(x,y)$. Then we have: (a) If the function $f$ is $L$-IS or $L$-AS, then it is $L$-S. (b) If $f$ is $L$-IS, then it is  $(2L)$-AS. (c) If $f$ %(x,y) = \frac{1}{n}\sum_{i=1}^{n}f_i(x,y)$ 
is L-AS, then $f(x,y) + \frac{\tau_x}{2}\|x-\Tilde{x}\|^2 - \frac{\tau_y}{2}\|y-\Tilde{y}\|^2$ is $\sqrt{2}(L+\max\{\tau_x, \tau_y\})$-AS for any $\Tilde{x}$ and $\Tilde{y}$.
\end{prop}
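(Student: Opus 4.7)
The plan is to verify each of the three claims by direct computation, leveraging the triangle inequality, Cauchy--Schwarz/Jensen's inequality, and the simple observation that for any vector $(u,v)$, $\|(u,v)\|\le\|u\|+\|v\|$ and $\|u\|^2+\|v\|^2=\|(u,v)\|^2$. None of the three parts is deep; the main care is in keeping track of constants so that (c) comes out at exactly $\sqrt{2}(L+\max\{\tau_x,\tau_y\})$.

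For part (a), in the $L$-IS case I would write $\nabla f = \frac{1}{n}\sum_i \nabla f_i$, then apply the triangle inequality to each of $\nabla_x f$ and $\nabla_y f$ separately and invoke the component-wise Lipschitz smoothness of each $f_i$ to obtain the $L$-S bound. In the $L$-AS case I would apply Jensen's inequality to pull the squared norm inside the average, i.e. $\|\nabla f(x_1,y_1)-\nabla f(x_2,y_2)\|^2 \le \frac{1}{n}\sum_i\|\nabla f_i(x_1,y_1)-\nabla f_i(x_2,y_2)\|^2$, then bound this by $L^2(\|x_1-x_2\|^2+\|y_1-y_2\|^2)$, take square roots, and use $\sqrt{a^2+b^2}\le a+b$ together with the fact that partial-gradient norms are dominated by the full-gradient norm.

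For part (b), the key identity is $\|\nabla f_i\|^2=\|\nabla_x f_i\|^2+\|\nabla_y f_i\|^2$. From $L$-IS, each of $\|\nabla_x f_i(x_1,y_1)-\nabla_x f_i(x_2,y_2)\|$ and $\|\nabla_y f_i(x_1,y_1)-\nabla_y f_i(x_2,y_2)\|$ is at most $L(\|x_1-x_2\|+\|y_1-y_2\|)$, so their squares sum to at most $2L^2(\|x_1-x_2\|+\|y_1-y_2\|)^2\le 4L^2(\|x_1-x_2\|^2+\|y_1-y_2\|^2)$ by the AM--QM inequality. Averaging over $i$ gives exactly the $(2L)$-AS bound.

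For part (c), I would define $g_i(x,y)=f_i(x,y)+\frac{\tau_x}{2}\|x-\tilde{x}\|^2-\frac{\tau_y}{2}\|y-\tilde{y}\|^2$, so that $\nabla g_i(x,y)-\nabla f_i(x,y)=(\tau_x(x-\tilde{x}),-\tau_y(y-\tilde{y}))$, which is a linear function of $(x,y)$ independent of $i$. Using the inequality $\|a+b\|^2\le 2\|a\|^2+2\|b\|^2$, I would bound
\begin{equation*}
\frac{1}{n}\sum_{i=1}^n\|\nabla g_i(x_1,y_1)-\nabla g_i(x_2,y_2)\|^2 \le 2L^2(\|x_1-x_2\|^2+\|y_1-y_2\|^2) + 2\max\{\tau_x,\tau_y\}^2(\|x_1-x_2\|^2+\|y_1-y_2\|^2).
\end{equation*}
Since $L^2+\max\{\tau_x,\tau_y\}^2\le(L+\max\{\tau_x,\tau_y\})^2$, the right-hand side is at most $[\sqrt{2}(L+\max\{\tau_x,\tau_y\})]^2(\|x_1-x_2\|^2+\|y_1-y_2\|^2)$, which is the stated bound. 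The only mild subtlety, which I would flag, is the choice of the splitting constant in $\|a+b\|^2\le 2\|a\|^2+2\|b\|^2$: a tighter weighted split would give a slightly sharper constant, but the factor $\sqrt{2}$ in the statement makes the symmetric split the natural choice and keeps the expression clean. No step presents a real obstacle.
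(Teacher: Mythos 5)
Your proposal is correct and follows essentially the same route as the paper: the paper dismisses (a) and (b) as immediate from the definitions (your triangle-inequality/Jensen and AM--QM details are exactly the standard fill-in), and for (c) the paper uses the identical splitting $\|a+b\|^2\le 2\|a\|^2+2\|b\|^2$ on each partial gradient followed by $2L^2+2\max\{\tau_x^2,\tau_y^2\}\le 2(L+\max\{\tau_x,\tau_y\})^2$.
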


\begin{definition}[Strong Convexity]
    A differentiable function $g:\mathbb{R}^{d_1}\rightarrow\mathbb{R}$ is convex if $g(x_2)\geq g(x_1)+\autoprod{\nabla g(x_1), x_2-x_1}$ for any $x_1, x_2\in\mathbb{R}^{d_1}$. Given $\mu\geq 0$, 
    we say $f$ is 
    %, it is said to be 
    $\mu$-strongly convex if $ g(x)-\frac{\mu}{2}\autonorm{x}^2 $ is convex, and it is $\mu$-strongly concave if $-g$ is $\mu$-strongly convex.
\end{definition}

Next we introduce main assumptions throughout this paper.

% \paragraph{Main Assumptions}
\begin{assume}[Main Settings] \label{main assumption}
% \begin{assume} \label{main assumption}
    We assume that $f(x,y)$ in \eqref{eq:main_problems} is a \textit{nonconvex-strongly-concave (NC-SC)} function such that $f$ is $L$-S, and $f(x,\cdot)$ is $\mu$-strongly concave for any fixed $x\in\mathbb{R}^{d_1}$; for the finite-sum case, we further assume that $\autobigpar{f_i}_{i=1}^n$ is $L$-AS. We assume that the initial primal suboptimality is bounded: $\Phi(x_0)-\inf_{x}\Phi(x)\leq \Delta$.
\end{assume}

Under Assumption \ref{main assumption}, the primal function $\Phi(\cdot)$ is differentiable and $2\kappa L$-Lipschitz smooth \citep[Lemma 23]{lin2020near} where $\kappa\triangleq\frac{L}{\mu}$. Throughout this paper, we use the stationarity of the primal function as the convergence criterion. 

\begin{definition}[Convergence Criterion]
    For a differentiable function $\Phi$, a point $\Bar{x}\in\dom\Phi$ is called an $\epsilon$-stationary point of $\Phi$ if $\autonorm{\nabla\Phi(\Bar{x})}\leq\epsilon$.
\end{definition}

Another commonly used criterion is the stationarity of $f$, i.e., $\autonorm{\nabla_x f(\Bar{x},\Bar{y})}\leq \epsilon, \autonorm{\nabla_y f(\Bar{x},\Bar{y})}\leq \epsilon$. This is a weaker convergence criterion. We refer readers to \citep[Section 4.3]{lin2020gradient} for the comparison of these two criteria.

\section{Lower Bounds for NC-SC Minimax Problems}
\label{sec:main_LB_NCSC}
In this section, we establish lower complexity bounds (LB) for finding approximate stationary points of NC-SC minimax problems, in both general and finite-sum settings. We first present the basic components of the oracle complexity framework \citep{blair1985problem} and then proceed to the details for each case. For simplicity, in this section only, we denote $x_d$ as the $d$-th coordinate of $x$ and $x^t$ as the variable $x$ in the $t$-th iteration.

\subsection{Framework and Setup}
We study the lower bound of finding primal stationary point under the well-known oracle complexity framework \citep{blair1985problem}, here we first present the basics of the framework.

\paragraph{Function class}
We consider the \textit{nonconvex-strongly-concave (NC-SC)} function class, as defined in Assumption \ref{main assumption}, with parameters $L,\mu,\Delta>0$, denoted by $ \mathcal{F}_{\mathrm{NCSC}}^{L,\mu,\Delta} $.

\paragraph{Oracle class}
We consider different oracles for the general and finite-sum settings. Define $z\triangleq(x,y)$. 
\begin{itemize}
    \item For the general setting, we consider the \textit{first-order oracle (FO)}, denoted as $ \mathbb{O}_{\mathrm{FO}}(f,\cdot) $, that for each query on point $ z $, it returns the gradient 
    $ \mathbb{O}_{\mathrm{FO}}(f,z)\triangleq\autopar{\nabla_x f(x,y), \nabla_y f(x,y)}. $
    \item For the finite-sum setting, \textit{incremental first-order oracle (IFO)} is often used in lower bound analysis \citep{agarwal2015lower}. This oracle for a function $f(x,y)=\frac{1}{n}\sum_{i=1}^nf_i(x,y)$, is such that for each query on point $ z $ and given $i\in[n]$, it %, as a stochastic mapping, will 
    returns the gradient of the $i$-th component, i.e.,
    $ \mathbb{O}_{\mathrm{IFO}}(f,z,i)\triangleq\autopar{\nabla_x f_i(x,y), \nabla_y f_i(x,y)}, $. Here, we consider \textit{averaged smooth IFO} and \textit{individually smooth IFO}, denoted as $ \mathbb{O}_{\mathrm{IFO}}^{L,\mathrm{AS}}(f) $ and $ \mathbb{O}_{\mathrm{IFO}}^{L,\mathrm{IS}}(f) $, where $\autobigpar{f_i}_{i=1}^n$ is $L$-AS or $L$-IS, respectively.
\end{itemize}

\paragraph{Algorithm class}
In this work, we consider \colorword{the class of {\em linear-span algorithms}}{black} interacting with oracle $\mathbb{O}$, denoted as ${\cal A}(\mathbb{O})$. 
%Here we consider linear-span algorithm class $ \mathcal{A} $. 
These algorithms satisfy the following property: if we let $(z^t)_t$ be the sequence of queries by the algorithm, where $z^t=(x^t,y^t)$; %and $\mathbb{O}$ is the used oracle, 
then for all $t$, we have 
\begin{equation}
    \label{eq:alg_protocol_deter}
    z^{t+1}\in\mathrm{Span}\autobigpar{z^0,\cdots,z^t;\mathbb{O}\autopar{f,z^0},\cdots,\mathbb{O}\autopar{f,z^t}}.
\end{equation}
For the finite-sum case, the above protocol fits with many existing deterministic and randomized linear-span algorithms. We distinguish the general and finite-sum setting by specifying the used oracle, which is $\mathbb{O}_{\mathrm{FO}}$ or $\mathbb{O}_{\mathrm{IFO}}$, respectively. Most existing first-order algorithms, including simultaneous and alternating update algorithms, can be formulated as linear-span algorithms. It is worth pointing out that typically the linear span assumption 
%in our setting, the linear-span assumption
is used without loss of generality, 
since there is a standard reduction from deterministic linear-span algorithms to arbitrary oracle based deterministic algorithms \citep{Nemirovski:1991, Nemirovski:1992, ouyang2019lower}. We defer this extension for future work.

\paragraph{Complexity measures}
The efficiency of algorithms is quantified by the \textit{oracle complexity} \citep{blair1985problem} of finding an $ \epsilon $-stationary point of the primal function: for an algorithm $\mathtt{A}\in\mathcal{A}(\mathbb{O})$ interacting with a FO oracle $\mathbb{O}$, an instance $f\in\mathcal{F}$, 
we define 
\begin{equation}
    T_{\epsilon}(f,\mathtt{A})\triangleq\inf
	\autobigpar{T\in\mathbb{N}|\|\nabla \Phi\autopar{x^T}\|\leq\epsilon}
\end{equation}
as the minimum number of oracle calls $\mathtt{A}$ makes to reach stationarity convergence. For the general case, we define the {\em worst-case complexity}
%following the protocol \eqref{eq:alg_protocol_deter}, 
\begin{equation}
    \label{eq:defn_LB_deter}
    % \small
    \mathrm{Compl}_\epsilon\autopar{\mathcal{F},\mathcal{A},\mathbb{O}}
    \triangleq
	\underset{f\in\mathcal{F}}{\sup}\ 
	\underset{\mathtt{A}\in{\mathcal{A}(\mathbb{O})}}{\inf}\ 
	T_{\epsilon}(f,\mathtt{A}).
\end{equation}
For the finite-sum case, we consider the {\em randomized complexity} \citep{braun2017lower}:
\begin{equation}
    \label{eq:defn_LB_FS}
    % \small
	\mathrm{Compl}_{\epsilon}\autopar{\mathcal{F},\mathcal{A},\mathbb{O}}
    \triangleq
	\underset{f\in\mathcal{F}}{\sup}\ 
	\underset{\mathtt{A}\in{\mathcal{A}(\mathbb{O})}}{\inf}\ 
	\mathbb{E}\ T_{\epsilon}(f,\mathtt{A}).
\end{equation}

Following the motivation of analysis discussed in Section \ref{sec:contribution}, we will use the zero-chain argument for the analysis. First we define the notion of (first-order) zero-chain \citep{carmon2019lowerII} and activation as follows.

\begin{definition}[Zero Chain, Activation] A function $ f:\mathbb{R}^d\rightarrow\mathbb{R} $ is a first-order zero-chain if for any $ x\in\mathbb{R}^d $, 
    \begin{equation}
        \mathrm{supp}\{x\}\subseteq\{1,\cdots,i-1\}\ \Rightarrow\ \mathrm{supp}\{\nabla f(x)\}\subseteq\{1,\cdots,i\},
    \end{equation}
    where $ \mathrm{supp}\{x\}\triangleq\{i\in[d]\ | \ x_i\neq 0\} $ and $ [d]=\{1,\cdots,d\} $. 
    \textcolor{black}{For an algorithm initialized at $0\in\mathbb{R}^d$, with iterates $\{x^t\}_t$, 
    we say coordinate $i$ is activated at $x^t$, if $x_i^t\neq 0$ and $x_i^s= 0$, for any $s<t$.}
\end{definition}

\subsection{General NC-SC Problems}
First we consider the \textit{general NC-SC (Gen-NC-SC)} minimax optimization problems. Following the above framework, we choose function class $\mathcal{F}_{\mathrm{NCSC}}^{L,\mu,\Delta}$, oracle $\mathbb{O}_{\mathrm{FO}}$, linear-span algorithms ${\cal A}$, and we analyze the complexity defined in \eqref{eq:defn_LB_deter}. 

\paragraph{Hard instance construction} 
Inspired by the hard instances constructed in~\citep{ouyang2019lower,carmon2019lowerII}, we introduce the following function $ F_d:\mathbb{R}^{d+1}\times\mathbb{R}^{d+2}\rightarrow\mathbb{R} $ ($d\in\mathbb{N}^+$) and 
\begin{equation}
    \label{eq:LB_hard_instance_deter}
	\begin{split}
	    F_d(x,y;\lambda,\alpha)
    	\triangleq
    	\lambda_1\autoprod{B_dx,y}-
    	\lambda_2\|y\|^2-\frac{\lambda_1^2\sqrt{\alpha}}{2\lambda_2}\autoprod{e_1,x}+
    	\frac{\lambda_1^2\alpha}{2\lambda_2}\sum_{i=1}^{d}\Gamma(x_i)-
    	\frac{\lambda_1^2\alpha}{4\lambda_2}x_{d+1}^2+
    	\frac{\lambda_1^2\sqrt{\alpha}}{4\lambda_2},
	\end{split}
\end{equation}
where $ \lambda=(\lambda_1,\lambda_2)\in\mathbb{R}^2 $ is the parameter vector, $ e_1\in\mathbb{R}^{d+1} $ is the unit vector with the only non-zero element in the first dimension, $ \Gamma:\mathbb{R}\rightarrow\mathbb{R} $ and $B_d\in\mathbb{R}^{(d+2)\times(d+1)}$ are
\begin{equation}
    B_d=
	\begin{bmatrix}
		& & & & 1\\
		& & & 1 & -1\\
		& & \iddots & \iddots &\\
		& 1 & -1 & &\\
		1 & -1 & & &\\
		\sqrt[4]{\alpha} & & & &
	\end{bmatrix},
	\quad
	\Gamma(x)
	=
	120\int_{1}^{x}\frac{t^2(t-1)}{1+t^2}dt.
\end{equation}
Matrix $ B_d $ essentially triggers the activation of variables at each iteration, and function $ \Gamma $ introduces nonconvexity in $x$ to the  instance. By the first-order optimality condition of $ F_d(x,\cdot;\lambda,\alpha) $, we can %it is easy to 
compute its primal function, $\Phi_d$:
\begin{equation}
    \label{eq:LB_hard_instance_deter_Phi}
	\begin{split}
	    \Phi_d(x;\lambda,\alpha)
	    \triangleq\ &
	    \max_{y\in\mathbb{R}^{d+1}}F_d\left(x,y;\lambda,\alpha\right)\\
	    =\ &
    	\frac{\lambda_1^2}{2\lambda_2}
    	\autopar{
    	\frac{1}{2}x^\top A_dx-
    	\sqrt{\alpha}x_1+
    	\frac{\sqrt{\alpha}}{2}+
    	\alpha\sum_{i=1}^{d}\Gamma(x_i)+
    	\frac{1-\alpha}{2}x_{d+1}^2
    	},
	\end{split}
\end{equation}
where $A_d\in\mathbb{R}^{(d+1)\times(d+1)}$ is
\begin{equation}
    \label{eq:matrix_A_d_definition}
	\begin{split}
	    A_d
	    =
	    \left(B_d^\top B_d-e_{d+1}e_{d+1}^\top\right)
	    =
    	\begin{bmatrix}
    		1+\sqrt{\alpha} & -1 & & & \\
    		-1 & 2 & -1 & & & \\
    		& -1 & 2 & \ddots & & \\
    		& & \ddots & \ddots & -1 & \\
    		& & & \ddots & 2 & -1 \\
    		& & & & -1 & 1
    	\end{bmatrix}.
	\end{split}
\end{equation}
The resulting primal function resembles the worst-case functions used in lower bound analysis of minimization problems \citep{nesterov2018lectures,carmon2019lowerII}.

\paragraph{Zero-Chain Construction}
First we summarize key properties of the instance and its zero-chain mechanism. We further denote $\hat{e}_i\in\mathbb{R}^{d+2}$ as the unit vector for the variable $y$ and define ($k\geq 1$)
\begin{equation}
	\begin{split}
		\mathcal{X}_k&\triangleq\mathrm{Span}\{e_1,e_2,\cdots,e_k\},\ \ \mathcal{X}_0\triangleq\{0\},
		\\
		\mathcal{Y}_k&\triangleq\mathrm{Span}\{\hat{e}_{d+2},\hat{e}_{d+1},\cdots,\hat{e}_{d-k+2}\},\ \ \mathcal{Y}_0\triangleq\{0\},
	\end{split}
\end{equation}
then we have the following properties for $F_d$.

\begin{lemma}[Properties of $ F_d $]
	\label{LM:NCSC_LB_F_D}
	For any $ d\in\mathbb{N}^+ $ and $\alpha\in\automedpar{0,1}$, $ F_d(x,y;\lambda,\alpha) $ in \eqref{eq:LB_hard_instance_deter} satisfies:
	\begin{itemize}
		\item[(i)] The function $ F_d(x,\cdot;\lambda,\alpha) $ is $ L_F $-Lipschitz smooth where $L_F=\max\autobigpar{\frac{200\lambda_1^2\alpha}{\lambda_2},2\lambda_1,2\lambda_2}$.
		\item[(ii)] For each fixed $ x\in\mathbb{R}^{d+1} $, $ F_d(x,\cdot;\lambda,\alpha) $ is $ \mu_F $-strongly concave where $ \mu_F=2\lambda_2 $.
		\item[(iii)] The following properties hold:
		\begin{enumerate}
			\item[a)] $ x=y=0\quad \Longrightarrow\quad \nabla_x F_d\in\mathcal{X}_1,\ \nabla_y F_d=0 $.
			\item[b)] $ x\in\mathcal{X}_k,\ y\in\mathcal{Y}_k\quad \Longrightarrow\quad \nabla_x F_d\in\mathcal{X}_{k+1},\ \nabla_y F_d\in\mathcal{Y}_k $.
			\item[c)] $ x\in\mathcal{X}_{k+1},\ y\in\mathcal{Y}_k\quad \Longrightarrow\quad \nabla_x F_d\in\mathcal{X}_{k+1},\ \nabla_y F_d\in\mathcal{Y}_{k+1} $.
		\end{enumerate}
		\item[(iv)] For $ L\geq\mu>0 $, if $ \lambda=\lambda^*=(\lambda_1^*,\lambda_2^*)=(\frac{L}{2},\frac{\mu}{2}) $ and $ \alpha\leq\frac{\mu}{100L} $, then $ F_d $ is $ L $-Lipschitz smooth. Moreover for any fixed $ x\in\mathbb{R}^{d+1} $, $ F_d(x,\cdot;\lambda,\alpha) $ is $ \mu $-strongly concave.
	\end{itemize}
\end{lemma}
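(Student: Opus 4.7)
The plan is to verify the four properties by direct computation from the explicit form of $F_d$, relying on two basic ingredients: (1) bounds on the operator norms of the Hessian blocks (for (i) and (iv)), and (2) a careful tracking of which coordinates of $x,y$ can become nonzero through the structure of $B_d$, $B_d^\top$ and $\Gamma'$ (for (iii)).

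First I would write down the gradient and Hessian explicitly. From \eqref{eq:LB_hard_instance_deter},
\begin{equation*}
    \nabla_x F_d = \lambda_1 B_d^\top y - \frac{\lambda_1^2\sqrt{\alpha}}{2\lambda_2}e_1 + \frac{\lambda_1^2\alpha}{2\lambda_2}\sum_{i=1}^d \Gamma'(x_i)e_i - \frac{\lambda_1^2\alpha}{2\lambda_2}x_{d+1}e_{d+1},\qquad \nabla_y F_d = \lambda_1 B_d x - 2\lambda_2 y.
\end{equation*}
The Hessian blocks are $\nabla_{yy}^2 F_d = -2\lambda_2 I$, $\nabla_{xy}^2 F_d = \lambda_1 B_d^\top$, and $\nabla_{xx}^2 F_d$ is diagonal with entries $\frac{\lambda_1^2\alpha}{2\lambda_2}\Gamma''(x_i)$ for $i\le d$ and $-\frac{\lambda_1^2\alpha}{2\lambda_2}$ at $(d+1,d+1)$. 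For (i), I would bound each block: a short calculus computation on $\Gamma(t)=120\int_1^t s^2(s-1)/(1+s^2)\,ds$ gives $\sup_t|\Gamma''(t)|\le C$ for a moderate constant $C<200$; and the tridiagonal-plus-one-extra-row structure of $B_d$, with entries of absolute value at most $1$ (since $\sqrt[4]{\alpha}\le 1$), yields $\|B_d\|_{\mathrm{op}}\le 2$. Combining the three blocks and accounting for the $L(\|x_1-x_2\|+\|y_1-y_2\|)$ form in the smoothness definition gives the $L_F$ bound claimed. Item (ii) is immediate since $F_d(x,\cdot;\lambda,\alpha)=\lambda_1\langle B_dx,y\rangle-\lambda_2\|y\|^2+\text{const}(y)$, whose $y$-Hessian is $-2\lambda_2 I$. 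Item (iv) is then a direct substitution: with $\lambda_1^*=L/2$, $\lambda_2^*=\mu/2$, one checks $2\lambda_1=L$, $2\lambda_2=\mu\le L$, and $\frac{200\lambda_1^2\alpha}{\lambda_2}=\frac{100L^2\alpha}{\mu}\le L$ exactly when $\alpha\le \mu/(100L)$, and strong concavity $\mu_F=2\lambda_2=\mu$ is read off directly.

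The main content is (iii), the zero-chain. For (a), at $x=y=0$ we have $B_d^\top y=0$, $B_d x=0$, and $\Gamma'(0)=0$ (direct from the integral representation), so only the constant vector $-\frac{\lambda_1^2\sqrt\alpha}{2\lambda_2}e_1\in\mathcal{X}_1$ survives in $\nabla_x F_d$, and $\nabla_y F_d=0$. For (b) and (c), I would establish two algebraic inclusions from the definition of $B_d$:
\begin{equation*}
    B_d\mathcal{X}_k\subseteq \mathcal{Y}_k \quad(k\ge 1),\qquad B_d^\top\mathcal{Y}_k\subseteq \mathcal{X}_{k+1}\quad(k\ge 1),
\end{equation*}
which follow from the fact that column $j$ of $B_d$ for $j\le d$ has nonzero entries only in rows $d+2-j$ and $d+3-j$, column $d+1$ only in rows $1$ and $2$, together with the extra $\sqrt[4]{\alpha}$ entry at $(d+2,1)$ that sends $e_1\mapsto \hat e_{d+1}+\sqrt[4]{\alpha}\hat e_{d+2}\in\mathcal{Y}_1$, and similarly for the transpose. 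Combined with the support-preserving property $x\in\mathcal{X}_k\Rightarrow \sum_{i=1}^d\Gamma'(x_i)e_i\in\mathcal{X}_k$ (using $\Gamma'(0)=0$ again) and $x_{d+1}e_{d+1}\in\mathcal{X}_{d+1}$ being consistent (the $(d+1)$-st coordinate only becomes active once $x_{d+1}\ne 0$), parts (b) and (c) follow by simply plugging $x\in\mathcal{X}_k$ or $\mathcal{X}_{k+1}$ and $y\in\mathcal{Y}_k$ into the gradient formulas and applying these inclusions term by term.

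The main obstacle I anticipate is being careful about the indexing mismatch between $x\in\mathbb{R}^{d+1}$ and $y\in\mathbb{R}^{d+2}$, and in particular the asymmetric roles of the coordinates $x_1$ (kick-started by the linear $-\frac{\lambda_1^2\sqrt{\alpha}}{2\lambda_2}\langle e_1,x\rangle$ term through the $\sqrt[4]{\alpha}$-entry in $B_d$) and $x_{d+1}$ (fed only by $B_d^\top \hat e_1$). Verifying that this asymmetry is consistent with the inclusions $B_d\mathcal{X}_k\subseteq\mathcal{Y}_k$ and $B_d^\top\mathcal{Y}_k\subseteq\mathcal{X}_{k+1}$ for every $k$, and thereby that each gradient oracle call advances the chain by exactly one coordinate on one side, is the step requiring the most bookkeeping; once it is done, (i), (ii) and (iv) reduce to routine norm bounds and substitution.
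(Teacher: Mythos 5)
Your proposal is correct and follows essentially the same route as the paper's proof: bounding the Hessian blocks (using $\|B_d\|\le 2$ and a uniform bound on $\Gamma''$) for (i) and (ii), tracking column supports of $B_d$ and $B_d^\top$ together with $\Gamma'(0)=0$ to get the alternating zero-chain inclusions in (iii), and direct parameter substitution for (iv). The only cosmetic difference is that you bound $\sup_t|\Gamma''(t)|$ directly (your $C<200$ is indeed valid) whereas the paper imports the looser constant $C_\gamma=360$ from prior work; both yield the stated $L_F$.
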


The proof of Lemma \ref{LM:NCSC_LB_F_D} is deferred to Appendix \ref{sec:Apdx_LM_NCSC_LB_F_D}. The first two properties show that  function $F_d$ is Lipschitz smooth and NC-SC; the third property above suggests that, starting from $ (x,y)=(0,0) $, the activation process follows an "alternating zero-chain" form \citep{ouyang2019lower}. That is, for a linear-span algorithm, when $ x\in\mathcal{X}_k,\ y\in\mathcal{Y}_k$, the next iterate will at most activate the $(k+1)$-th coordinate of $x$ while keeping $y$ fixed; similarly when $x\in\mathcal{X}_{k+1},\ y\in\mathcal{Y}_k$, the next iterate will at most activate the $(d-k+1)$-th element of $y$. We need the following properties of $ \Phi_d $ for the lower bound argument.

\begin{lemma}[Properties of $ \Phi_d $]
	\label{LM:NCSC_LB_PHI}
	For any $ \alpha\in\automedpar{0,1} $ and $ x\in\mathbb{R}^{d+1} $, if $ x_d=x_{d+1}=0 $, we have: 
	\begin{itemize}
	    \item[(i)] $ \autonorm{\nabla\Phi_d(x;\lambda,\alpha)}\geq\frac{\lambda_1^2}{8\lambda_2}\alpha^{3/4} $.
	    \item[(ii)] $\Phi_d\autopar{0;\lambda,\alpha}
        	-
        	\inf_{x\in\mathbb{R}^{d+1}}\Phi_d(x;\lambda,\alpha)
        	\leq
        	\frac{\lambda_1^2}{2\lambda_2}\left(\frac{\sqrt{\alpha}}{2}+10\alpha d\right)$.
	\end{itemize}
\end{lemma}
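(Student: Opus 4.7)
The plan is to work directly with the closed-form primal function in \eqref{eq:LB_hard_instance_deter_Phi}, splitting it into the tridiagonal quadratic-linear part $\frac{1}{2}x^\top A_d x-\sqrt{\alpha}x_1$, the nonconvex $\Gamma$-perturbation $\alpha\sum_{i=1}^d\Gamma(x_i)$, and the boundary quadratic $\frac{1-\alpha}{2}x_{d+1}^2$. Part~(ii) then reduces to an explicit minimization of the quadratic-linear piece together with a bound on $\Gamma(0)$, while part~(i) is handled by a duality/test-vector argument that exploits the boundary condition $x_d=x_{d+1}=0$.

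For~(ii), I would first evaluate $\Phi_d(0;\lambda,\alpha)$: plugging $x=0$ into \eqref{eq:LB_hard_instance_deter_Phi} leaves only $\sqrt{\alpha}/2$ and $\sum_{i=1}^d\Gamma(0)=d\,\Gamma(0)$, giving $\Phi_d(0;\lambda,\alpha)=\frac{\lambda_1^2}{2\lambda_2}\bigl(\frac{\sqrt{\alpha}}{2}+\alpha d\,\Gamma(0)\bigr)$. Next, I would lower bound $\inf_x\Phi_d$ via three observations: (a) $\Gamma\geq 0$ on $\mathbb{R}$, since $\Gamma'(x)=120\,x^2(x-1)/(1+x^2)$ is $\leq 0$ on $(-\infty,1)$ and $\geq 0$ on $(1,\infty)$ with $\Gamma(1)=0$; (b) $\frac{1-\alpha}{2}x_{d+1}^2\geq 0$ for $\alpha\leq 1$; (c) $\min_x[\frac{1}{2}x^\top A_d x-\sqrt{\alpha}x_1]=-\frac{\alpha}{2}(A_d^{-1})_{11}$. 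The key sub-computation is $(A_d^{-1})_{11}=1/\sqrt{\alpha}$: solving $A_d v=e_1$, the interior rows of \eqref{eq:matrix_A_d_definition} give the recursion $v_{i+1}=2v_i-v_{i-1}$, and the Neumann-type last row $-v_d+v_{d+1}=0$ forces the resulting arithmetic progression to be constant, so $v_1=\cdots=v_{d+1}=b$; the first row then yields $\sqrt{\alpha}\,b=1$, hence $b=1/\sqrt{\alpha}$. Assembling these gives $\inf\Phi_d\geq 0$, so $\Phi_d(0)-\inf\Phi_d\leq\frac{\lambda_1^2}{2\lambda_2}\bigl(\frac{\sqrt{\alpha}}{2}+\alpha d\,\Gamma(0)\bigr)$. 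Finally, a direct evaluation using $\frac{t^2}{1+t^2}=1-\frac{1}{1+t^2}$ yields $\Gamma(0)=120\bigl(\frac{1}{2}-\frac{\pi}{4}+\frac{\ln 2}{2}\bigr)<10$, which closes the bound.

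For~(i), I would write $\nabla\Phi_d(x)=\frac{\lambda_1^2}{2\lambda_2}g(x)$ with $g(x)=A_dx+(1-\alpha)x_{d+1}e_{d+1}+\alpha\nabla_\Gamma(x)-\sqrt{\alpha}e_1$ and $\nabla_\Gamma(x)_i=\Gamma'(x_i)\mathbf{1}_{i\leq d}$. Under $x_d=x_{d+1}=0$, one immediately gets $g_{d+1}=0$ and $g_d=-x_{d-1}$. I would apply the test-vector bound $\|g\|\geq|\langle w,g\rangle|/\|w\|$ with $w\in\mathbb{R}^{d+1}$ defined by $w_i=1+(i-1)\sqrt{\alpha}$: this $w$ satisfies the interior recursion $-w_{i-1}+2w_i-w_{i+1}=0$ and is matched to the corner entry $(A_d)_{11}=1+\sqrt{\alpha}$, giving $A_dw=\sqrt{\alpha}\,e_{d+1}$. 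Consequently $\langle w,A_dx\rangle=\sqrt{\alpha}\,x_{d+1}=0$, $(1-\alpha)x_{d+1}w_{d+1}=0$, and $\langle w,e_1\rangle=w_1=1$, yielding $\langle w,g\rangle=-\sqrt{\alpha}+\alpha\sum_{i=1}^dw_i\Gamma'(x_i)$. The nonconvex correction is controlled via a dichotomy: if $\|x\|$ is large the term $A_dx$ already makes $\|g\|$ large directly, whereas for bounded $\|x\|$ the estimate $|\Gamma'(x_i)|\leq 120\,x_i^2$ renders the correction small relative to $\sqrt{\alpha}$. Together with $\|w\|=O(\sqrt{d}(1+d\sqrt{\alpha}))$, this delivers $\|g\|\gtrsim\sqrt{\alpha/d}\geq\alpha^{3/4}/4$ in the regime $d\lesssim\alpha^{-1/2}$ that governs the final lower bound proof.

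The main obstacle is formalizing the test-vector argument in~(i): bounding $\alpha\langle w,\nabla_\Gamma(x)\rangle$ uniformly over $\{x:x_d=x_{d+1}=0\}$, handling both small- and large-$\|x\|$ regimes cleanly, and pinning down the precise constant $\alpha^{3/4}/4$ rather than a looser estimate. Part~(ii), in contrast, is essentially a bookkeeping exercise once the identity $(A_d^{-1})_{11}=1/\sqrt{\alpha}$ is recognized and the numerical estimate $\Gamma(0)<10$ is obtained.
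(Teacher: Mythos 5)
Your part (ii) is correct and is actually more self-contained than the paper's: the paper simply observes that $\Phi_{d2}\ge 0$ and cites \citep[Lemma~4]{carmon2019lowerII} for $\Phi_{d1}(0)-\inf\Phi_{d1}\le\frac{\lambda_1^2}{2\lambda_2}(\frac{\sqrt{\alpha}}{2}+10\alpha d)$, whereas you reprove that lemma directly. Your computations check out: $(A_d^{-1})_{11}=1/\sqrt{\alpha}$ (equivalently, $\frac12 x^\top A_dx-\sqrt{\alpha}x_1+\frac{\sqrt{\alpha}}{2}=\frac{\sqrt{\alpha}}{2}(x_1-1)^2+\frac12\sum_{i=1}^d(x_i-x_{i+1})^2\ge 0$), $\Gamma\ge 0$ with minimum at $x=1$, and $\Gamma(0)=120(\tfrac12-\tfrac{\pi}{4}+\tfrac{\ln 2}{2})\approx 7.35<10$.

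Part (i) is where the gap lies. The paper again proceeds by citation: under $x_{d+1}=0$ the gradient of $\Phi_{d2}$ vanishes, $\Phi_{d1}$ is $\frac{\lambda_1^2}{2\lambda_2}$ times the hard function of \citep{carmon2019lowerII}, and their Lemma~3 gives $\|\nabla\Phi_{d1}\|\ge\frac{\lambda_1^2}{2\lambda_2}\cdot\frac{\alpha^{3/4}}{4}$ for \emph{all} $d$. Your global test-vector argument cannot deliver this. First, even granting perfect control of the $\Gamma'$ term, $\|w\|^2=\sum_{i=1}^{d+1}(1+(i-1)\sqrt{\alpha})^2=\Theta(d+d^3\alpha)$, so $|\langle w,g\rangle|/\|w\|\lesssim\sqrt{\alpha}/(\sqrt{d}(1+d\sqrt{\alpha}))$ only matches $\alpha^{3/4}$ when $d=O(\alpha^{-1/2})$; but the lemma has no such restriction and is invoked in Theorem~\ref{THM:LB_NCSC_DETER} with $d=\lfloor\Delta L\sqrt{\kappa}\epsilon^{-2}/12800\rfloor\to\infty$ while $\alpha=\mu/(100L)$ stays fixed, so $d\gg\alpha^{-1/2}$ for small $\epsilon$. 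A correct proof must be \emph{local} (Carmon et al.\ work near the first index $j$ with $|x_j|<1$), not a single inner product against a vector supported on all coordinates. Second, your dichotomy is unsound: $A_d$ satisfies $x^\top A_dx=\sqrt{\alpha}x_1^2+\sum_i(x_i-x_{i+1})^2$, so slowly varying $x$ of arbitrarily large norm (e.g.\ $x\approx(1,\dots,1)$, where $\nabla\Phi_{d1}\approx 0$) gives tiny $\|A_dx\|$ — large $\|x\|$ does not force $\|g\|$ to be large, and the whole difficulty of the lemma is precisely that the constraint $x_d=x_{d+1}=0$ forces a descent from $\approx 1$ to $0$ somewhere. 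Finally, a minor error: $|\Gamma'(x)|\le 120x^2$ fails (e.g.\ $|\Gamma'(-1/2)|=36>30$); the sharp constant is $60(1+\sqrt{2})\approx 145$. To close part (i) you should either reproduce Carmon et al.'s case analysis on $j=\min\{i:|x_i|<1\}$ or, as the paper does, invoke their Lemma~3 directly.
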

We defer the proof of Lemma \ref{LM:NCSC_LB_PHI} to Appendix \ref{sec:Apdx_LM_NCSC_LB_PHI}. This lemma indicates that, starting from $ (x,y)=(0,0) $ with appropriate parameter settings, the primal function $ \Phi_d $ will not approximate stationarity until the last two coordinates are activated. Now we are ready to present our final lower bound result for the general NC-SC case.

\begin{theorem}[LB for Gen-NC-SC] 
    \label{THM:LB_NCSC_DETER}
    For the linear-span first-order algorithm class $ \mathcal{A} $, parameters $ L,\mu,\Delta>0 $, 
    % then for small enough accuracy $ \epsilon>0 $,
    and accuracy $\epsilon$ satisfying $\epsilon^2\leq\min\left(\frac{\Delta L}{6400}, \frac{\Delta L\sqrt{\kappa}}{38400}\right)$,
    we have
    \begin{equation}
    % \small
        \mathrm{Compl}_\epsilon
        \autopar{
        \mathcal{F}_{\mathrm{NCSC}}^{L,\mu,\Delta},\mathcal{A},\mathbb{O}_{\mathrm{FO}}
        }
        =
        \Omega\autopar{\sqrt{\kappa} \Delta L \epsilon^{-2}}.
    \end{equation}
\end{theorem}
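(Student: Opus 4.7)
The strategy is to exhibit a single hard instance $F_d(\,\cdot\,;\lambda^{*},\alpha)$ from \eqref{eq:LB_hard_instance_deter} with $\lambda^{*}=(L/2,\mu/2)$, and to choose the dimension $d$ and scale $\alpha>0$ as functions of $(L,\mu,\Delta,\epsilon)$ so that the three properties in Lemmas~\ref{LM:NCSC_LB_F_D} and~\ref{LM:NCSC_LB_PHI} conspire to force any linear-span algorithm to spend $\Omega(\sqrt{\kappa}\Delta L\epsilon^{-2})$ oracle calls. By Lemma~\ref{LM:NCSC_LB_F_D}(iv), any $\alpha\leq \mu/(100L)=1/(100\kappa)$ already makes $F_d$ be $L$-smooth and $\mu$-strongly concave in $y$; the only extra requirement for $F_d\in \mathcal{F}_{\mathrm{NCSC}}^{L,\mu,\Delta}$ is the initial-gap bound $\frac{L^{2}}{4\mu}\bigl(\frac{\sqrt{\alpha}}{2}+10\alpha d\bigr)\leq \Delta$ furnished by Lemma~\ref{LM:NCSC_LB_PHI}(ii). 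I will select $(d,\alpha)$ at the boundary of these feasibility constraints.

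The engine of the lower bound is then the alternating zero-chain property of Lemma~\ref{LM:NCSC_LB_F_D}(iii). For any $\mathtt{A}\in\mathcal{A}(\mathbb{O}_{\mathrm{FO}})$ started at $z^{0}=0$, a straightforward induction on the iteration index $t$, combining the span rule \eqref{eq:alg_protocol_deter} with parts (a)--(c) of that lemma, shows that the iterates satisfy $x^{2k}\in\mathcal{X}_{k}$, $y^{2k}\in\mathcal{Y}_{k}$, $x^{2k+1}\in\mathcal{X}_{k+1}$, $y^{2k+1}\in\mathcal{Y}_{k}$. In particular $x_{d}^{t}=x_{d+1}^{t}=0$ for every $t\leq 2d-2$, so Lemma~\ref{LM:NCSC_LB_PHI}(i) gives the uniform estimate $\|\nabla\Phi_{d}(x^{t};\lambda^{*},\alpha)\|\geq \frac{L^{2}}{16\mu}\alpha^{3/4}=\frac{L\kappa}{16}\alpha^{3/4}$ throughout this range. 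Imposing $\frac{L\kappa}{16}\alpha^{3/4}>\epsilon$, i.e.\ $\alpha\geq (16\epsilon/(L\kappa))^{4/3}$, therefore ensures that no iterate before time $2d-1$ is $\epsilon$-stationary, so $T_{\epsilon}(F_{d},\mathtt{A})\geq 2d-1$.

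It remains to maximize $d$ subject to the three coupled inequalities: the smoothness ceiling $\alpha\leq 1/(100\kappa)$, the gap budget $\frac{L\kappa}{4}\bigl(\frac{\sqrt{\alpha}}{2}+10\alpha d\bigr)\leq\Delta$, and the gradient floor $\alpha\geq(16\epsilon/(L\kappa))^{4/3}$. The two hypotheses $\epsilon^{2}\leq \Delta L/6400$ and $\epsilon^{2}\leq \Delta L\sqrt{\kappa}/38400$ in the theorem are exactly what is needed to, respectively, (a) guarantee that the $\sqrt{\alpha}/2$ term in the gap estimate is dominated by $10\alpha d$ so that the gap constraint collapses to $d=O(\Delta\mu/(L^{2}\alpha))$, and (b) guarantee that the $\alpha$ forced from below by the gradient floor still lies under the smoothness ceiling. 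Substituting the feasible $\alpha$ into $d$ and invoking $T_{\epsilon}\geq 2d-1$ with the arbitrariness of $\mathtt{A}$ and $f\in\mathcal{F}$ in \eqref{eq:defn_LB_deter} then yields $\mathrm{Compl}_{\epsilon}(\mathcal{F}_{\mathrm{NCSC}}^{L,\mu,\Delta},\mathcal{A},\mathbb{O}_{\mathrm{FO}})=\Omega(\sqrt{\kappa}\Delta L\epsilon^{-2})$.

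The main obstacle I anticipate is precisely this final algebraic balancing. Because $F_{d}$ is constrained to be only $L$-smooth while its primal $\Phi_{d}$ carries smoothness $\Theta(\kappa L)$, the nonconvex ``Carmon part'' of $\Phi_{d}$ must be rescaled by $\alpha$, which is why the gradient floor in Lemma~\ref{LM:NCSC_LB_PHI}(i) scales only as $\alpha^{3/4}$ and not as a constant. Reconciling this $\alpha^{3/4}$ rate with the linear-in-$\alpha$ behaviour of the gap so as to produce the sharp $\sqrt{\kappa}$ factor, rather than either the naive $\kappa$ that a Carmon-style argument applied directly to $\Phi_{d}$ would give or the much weaker $(L\kappa)^{1/3}\epsilon^{-4/3}$ one gets from the most literal reading of the three constraints, is the delicate computation. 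Confirming that the feasible interval for $\alpha$ is nonempty throughout the $\epsilon$-regime specified in the theorem is the accompanying bookkeeping.
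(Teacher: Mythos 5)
Your setup (the instance $F_d$ with $\lambda^*=(L/2,\mu/2)$, the alternating zero-chain induction giving $T_\epsilon\geq 2d-1$, and the three constraints you list) is the right skeleton, but the final ``algebraic balancing'' you defer is not merely delicate — with the \emph{unscaled} instance it cannot produce the claimed bound, and you essentially concede this yourself when you note that the literal reading of your three constraints yields only $d=\Theta\bigl(\Delta (L\kappa)^{1/3}\epsilon^{-4/3}\bigr)$. Concretely: your only free parameter besides $d$ is $\alpha$. Taking $\alpha$ at the smoothness ceiling $1/(100\kappa)$ makes the gap budget force $d=O(\Delta/L)$, a constant in $\epsilon$; taking $\alpha$ at the gradient floor $(16\epsilon/(L\kappa))^{4/3}$ gives $d=O(\Delta(L\kappa)^{1/3}\epsilon^{-4/3})$, which is asymptotically \emph{smaller} than the target $\sqrt{\kappa}\,\Delta L\epsilon^{-2}$ as $\epsilon\to 0$. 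No choice of $\alpha$ in between does better, because the gradient threshold $\frac{L\kappa}{16}\alpha^{3/4}$ and the per-coordinate gap cost $\frac{L\kappa}{4}\cdot 10\alpha$ are rigidly coupled through the single parameter $\alpha$. You need a second degree of freedom.

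The missing ingredient is the scaling trick (Lemmas \ref{lm:scaling} and \ref{lm:Primal_Scaled_Deter_Hard_Instance} in the paper): the actual hard instance is $f(x,y)=\eta^2 F_d(x/\eta,y/\eta;\lambda^*,\alpha)$, which preserves $L$-smoothness and $\mu$-strong concavity for any $\eta$, multiplies the primal gradient norm at non-activated points by $\eta$, and multiplies the initial gap by $\eta^2$. One then keeps $\alpha=\mu/(100L)$ at its ceiling (which is optimal for the gap-per-activated-coordinate trade-off) and sets $\eta=\frac{16\mu}{L^2}\alpha^{-3/4}\epsilon$ so that the gradient lower bound becomes exactly $\epsilon$; the gap condition then reads $\frac{64\epsilon^2}{L}\bigl(50+100d/\sqrt{\kappa}\bigr)\leq\Delta$, which permits $d=\Theta(\sqrt{\kappa}\,\Delta L\epsilon^{-2})$, and the two hypotheses on $\epsilon^2$ are used precisely to absorb the constant term and to guarantee $d\geq 3$ — not, as you suggest, to reconcile a gradient floor on $\alpha$ with the smoothness ceiling. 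Without introducing $\eta$ (or an equivalent renormalization) your argument proves a strictly weaker lower bound.
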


The hard instance in the proof is established based on $F_d$ in \eqref{eq:LB_hard_instance_deter}. We choose the scaled function $f(x,y)=\eta^2F_d(\frac{x}{\eta},\frac{y}{\eta};\lambda^*,\alpha)$ as the final hard instance, which  preserves the smoothness and strong convexity (by Lemma \ref{lm:scaling}), while appropriate setting of $\eta$ will help to fulfill the requirements on the initial gap and large gradient norm (before thorough activation) of the primal function. The detailed statement and proof of Theorem \ref{THM:LB_NCSC_DETER} are presented in Appendix \ref{sec:Apdx_THM_LB_NCSC_DETER}.

\begin{remark}[Tightness of Theorem \ref{THM:LB_NCSC_DETER}]
	The best-known upper bounds for general NC-SC problems are $ O(\Delta L\kappa^2\epsilon^{-2}) $ \citep{lin2020gradient, boct2020alternating} and $ \Tilde{O}\autopar{\Delta \sqrt{\kappa}L\epsilon^{-2}\log^2\frac{1}{\epsilon}} $ \citep{lin2020near}. Therefore, our result exhibits significant %so there are 
	gaps in terms of the dependence on $ \epsilon $ and $ \kappa $. In order to mitigate these gaps, we propose faster algorithms in Section \ref{sec:main_Catalyst}. On the other hand, compared to the $\Omega(\Delta L\epsilon^{-2})$ lower bound for nonconvex smooth minimization \citep{carmon2019lower}, our result reveals an explicit dependence on $\kappa$.
\end{remark}

\subsection{Finite-Sum NC-SC Problems}
The second case we consider is \textit{finite-sum NC-SC (FS-NC-SC)} minimax problems, for the function class $\mathcal{F}_{\mathrm{NCSC}}^{L,\mu,\Delta}$, the linear-span algorithm class $\mathcal{A}$ and the averaged smooth  IFO class $\mathbb{O}_{\mathrm{IFO}}^{L,\mathrm{AS}}$. The complexity is defined in \eqref{eq:defn_LB_FS}.

\paragraph{Hard instance construction} To derive the finite-sum hard instance, we modify $F_d$ in \eqref{eq:LB_hard_instance_deter} with orthogonal matrices defined as follows.
\begin{definition}[Orthogonal Matrices]
    For positive integers $a,b,n\in\mathbb{N}^+$, we define a matrix sequence $ \{\mathbf{U}^{(i)}\}_{i=1}^n \in \mathrm{\mathbf{Orth}}(a,b,n) $ if for each $ i, j\in\autobigpar{1,\cdots, n} $ and $ i\neq j $, $ \mathbf{U}^{(i)},\mathbf{U}^{(j)}\in\mathbb{R}^{a\times b} $ and  $ \mathbf{U}^{(i)}(\mathbf{U}^{(i)})^\top=\mathbf{I}\in\mathbb{R}^{a\times a} $ and $\mathbf{U}^{(i)}(\mathbf{U}^{(j)})^\top=\mathbf{0}\in\mathbb{R}^{a\times a}$.
\end{definition}

Here the intuition for the finite-sum hard instance is combining $n$ independent copies of the hard instance in the general case \eqref{eq:LB_hard_instance_deter}, then appropriate orthogonal matrices will convert the $n$ independent variables with dimension $d$ into one variable with dimension $n\times d$, which results in the desired hard instance. To preserve the zero chain property, for $\{\mathbf{U}^{(i)}\}_{i=1}^n \in \mathrm{\mathbf{Orth}}(d+1,n(d+1),n)$, $\{\mathbf{V}^{(i)}\}_{i=1}^n \in \mathrm{\mathbf{Orth}}(d+2,n(d+2),n)$, $\forall n,d\in\mathbb{N}^+$ and $x\in\mathbb{R}^{n(d+1)}$, $y\in\mathbb{R}^{n(d+2)}$, we set $\mathbf{U}^{(i)}$ and $\mathbf{V}^{(i)}$ by concatenating $n$ matrices:
\begin{equation}
    \label{eq:LB_FS_Orthogonal_Matrix}
    \begin{split}
        \mathbf{U}^{(i)}
        =\ &
        \begin{bmatrix}
            \mathbf{0}_{d+1} & \cdots & \mathbf{0}_{d+1} & \mathbf{I}_{d+1} & \mathbf{0}_{d+1} &  \cdots & \mathbf{0}_{d+1}
        \end{bmatrix},\\
        \mathbf{V}^{(i)}
        =\ &
        \begin{bmatrix}
            \mathbf{0}_{d+2} & \cdots & \mathbf{0}_{d+2} & \mathbf{I}_{d+2} & \mathbf{0}_{d+2} &  \cdots & \mathbf{0}_{d+2}
        \end{bmatrix},
    \end{split}
\end{equation}
where $\mathbf{0}_d, \mathbf{I}_d\in\mathbb{R}^{d\times d}$ are the zero and identity matrices respectively, while the $i$-th matrix above is the identity matrix. Hence, $\mathbf{U}^{(i)}x$ will be the $(id-d+1)$-th to the $(id)$-th elements of $x$, similar property also holds for $\mathbf{V}^{(i)}y$. 

The hard instance construction here follows the idea of that in the deterministic hard instance \eqref{eq:LB_hard_instance_deter}, the basic motivation is that its primal function will be a finite-sum form of the primal function $\Phi_d$ defined in the deterministic case \eqref{eq:LB_hard_instance_deter_Phi}. We choose the following functions $H_d:\mathbb{R}^{d+1}\times\mathbb{R}^{d+2}\rightarrow\mathbb{R}$, $\Gamma_d^n:\mathbb{R}^{n(d+1)}\rightarrow\mathbb{R}$ and
\begin{equation}
    \begin{split}
        H_d(x,y;\lambda,\alpha)
    	\triangleq\ &
    	\lambda_1\autoprod{B_dx,y}-
    	\lambda_2\|y\|^2-\frac{\lambda_1^2\sqrt{\alpha}}{2\lambda_2}\autoprod{e_1,x}-
    	\frac{\lambda_1^2\alpha}{4\lambda_2}x_{d+1}^2+
    	\frac{\lambda_1^2\sqrt{\alpha}}{4\lambda_2},\\
    	\Gamma_d^n(x)
    	\triangleq\ &
    	\sum_{i=1}^n\sum_{j=i(d+1)-d}^{i(d+1)-1}\Gamma(x_j),
    \end{split}
\end{equation}
then $\bar{f}_i, \bar{f}: \mathbb{R}^{n(d+1)}\times\mathbb{R}^{n(d+2)}\rightarrow\mathbb{R}$, $\{\mathbf{U}^{(i)}\}_{i=1}^n \in \mathrm{\mathbf{Orth}}(d+1,n(d+1),n)$, $\{\mathbf{V}^{(i)}\}_{i=1}^n \in \mathrm{\mathbf{Orth}}(d+2,n(d+2),n)$ and
\begin{equation}
    \label{eq:LB_hard_instance_FS}
	\begin{split}
	    \bar{f}_i(x,y)
    	\triangleq\ &
    	H_d\autopar{\mathbf{U}^{(i)}x,\mathbf{V}^{(i)}y;\lambda,\alpha}+\frac{\lambda_1^2\alpha}{2n\lambda_2}\Gamma_d^n(x),\\
    	\bar{f}(x,y)
    	\triangleq\ &
    	\frac{1}{n}\sum_{i=1}^{n}\bar{f}_i(x,y)
    	=
    	\frac{1}{n}\sum_{i=1}^{n}\automedpar{H_d\autopar{\mathbf{U}^{(i)}x,\mathbf{V}^{(i)}y;\lambda,\alpha}+\frac{\lambda_1^2\alpha}{2n\lambda_2}\Gamma_d^n(x)},
	\end{split}
\end{equation}
note that by denoting $\Gamma_d(x)\triangleq\sum_{i=1}^{d}\Gamma(x_i)$, it is easy to find that
\begin{equation}
    \label{eq:Gamma_nd_equivalent}
    \Gamma_d^n(x)=\sum_{i=1}^n\sum_{j=i(d+1)-d}^{i(d+1)-1}\Gamma(x_j)=\sum_{i=1}^n\Gamma_d\autopar{\mathbf{U}^{(i)}x}=\sum_{i=1}^n\sum_{j=1}^{d}\Gamma\autopar{\autopar{\mathbf{U}^{(i)}x}_j}.
\end{equation}
Define $ u^{(i)}\triangleq\mathbf{U}^{(i)}x $, we summarize the properties of the above functions in the following lemma.

\begin{lemma}[Properties of $\bar{f}$]
    \label{lm:properties_hard_instance_FS_AS}
    For the above functions $\{\bar{f}_i\}_i$ and $\bar{f}$ in \eqref{eq:LB_hard_instance_FS}, they satisfy that:
    \begin{itemize}
        \item[(i)] $\{\bar{f}_i\}_i$ is $L_F$-AS where $L_F=\sqrt{\frac{1}{n}\max\autobigpar{16\lambda_1^2+8\lambda_2^2, \frac{C_\gamma^2\lambda_1^4\alpha^2}{n\lambda_2^2}+\frac{\lambda_1^4\alpha^2}{\lambda_2^2}+8\lambda_1^2}}$.
        \item[(ii)] $\bar{f}$ is $\mu_F$-strongly concave on $y$ where $\mu_F=\frac{2\lambda_2}{n}$.
        \item[(iii)] For $n\in\mathbb{N}^+$, $ L\geq 2n\mu>0 $, if we set $ \lambda=\lambda^*=(\lambda_1^*,\lambda_2^*)=\autopar{\sqrt{\frac{n}{40}}L,\frac{n\mu}{2}} $,
        $\alpha=\frac{n\mu}{50L}\in\automedpar{0,1}$, then $\{\bar{f}_i\}_i$ is $L$-AS and $\bar{f}$ is $\mu$-strongly concave on $y$.
        \item[(iv)] Define $ \bar{\Phi}(x)\triangleq \max_y \bar{f}(x,y)$, then we have
        \begin{equation}
            \bar{\Phi}(x)=\frac{1}{n}\sum_{i=1}^{n}\bar{\Phi}_i(x),
            \quad \text{where}\quad
            \bar{\Phi}_i(x)\triangleq\Phi_d(\mathbf{U}^{(i)}x),
        \end{equation}
    while $\Phi_d$ is defined in \eqref{eq:LB_hard_instance_deter_Phi}.
    \end{itemize}
\end{lemma}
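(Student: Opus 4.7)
The plan is to verify the four claims by exploiting two structural features of the construction: the orthogonality identities $\mathbf{U}^{(i)}(\mathbf{U}^{(j)})^\top=\delta_{ij}\mathbf{I}$, $\mathbf{V}^{(i)}(\mathbf{V}^{(j)})^\top=\delta_{ij}\mathbf{I}$ with $\sum_i (\mathbf{U}^{(i)})^\top\mathbf{U}^{(i)}=\mathbf{I}_{n(d+1)}$ and $\sum_i (\mathbf{V}^{(i)})^\top\mathbf{V}^{(i)}=\mathbf{I}_{n(d+2)}$; and the fact that the extra $\frac{\lambda_1^2\alpha}{2n\lambda_2}\Gamma_d^n$ term was added precisely so that the $\Gamma$ contributions of $\Phi_d$ are regenerated after the maximization over $y$. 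I would address (iv) first, since it dictates the choice of the extra $\Gamma_d^n$ term. Starting from $H_d$ I would compute $\max_y H_d(x,y)$ by closed form (since $H_d(x,\cdot)$ is quadratic strongly concave), obtain $y^\star=\frac{\lambda_1}{2\lambda_2}B_d x$, substitute, and use $B_d^\top B_d = A_d + e_{d+1}e_{d+1}^\top$ to recognize the result as $\Phi_d(x;\lambda,\alpha) - \frac{\lambda_1^2\alpha}{2\lambda_2}\Gamma_d(x)$. Because the blocks $\mathbf{V}^{(i)}y$ cover disjoint coordinates of $y$, the outer maximization over $y\in\mathbb R^{n(d+2)}$ of $\frac{1}{n}\sum_i H_d(\mathbf{U}^{(i)}x,\mathbf{V}^{(i)}y)$ decouples into $n$ independent maximizations, giving $\frac{1}{n}\sum_i[\Phi_d(\mathbf{U}^{(i)}x)-\frac{\lambda_1^2\alpha}{2\lambda_2}\Gamma_d(\mathbf{U}^{(i)}x)]$. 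Combined with the identity \eqref{eq:Gamma_nd_equivalent}, the $\Gamma$ contributions from the maximization exactly cancel with $\frac{\lambda_1^2\alpha}{2n\lambda_2}\Gamma_d^n(x)$, and the claimed decomposition $\bar\Phi(x)=\frac{1}{n}\sum_i \Phi_d(\mathbf{U}^{(i)}x)$ follows.

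For (ii), I would simply differentiate: $\nabla^2_y\bar f(x,y)=\frac{1}{n}\sum_i(\mathbf{V}^{(i)})^\top(-2\lambda_2\mathbf{I})\mathbf{V}^{(i)}=-\frac{2\lambda_2}{n}\mathbf{I}$, using $\sum_i(\mathbf{V}^{(i)})^\top\mathbf{V}^{(i)}=\mathbf{I}$; this yields $\mu_F=\frac{2\lambda_2}{n}$.

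For (i), I would compute $\nabla\bar f_i$ explicitly: the $H_d$ part contributes $(\mathbf{U}^{(i)})^\top\nabla_xH_d(\mathbf{U}^{(i)}x,\mathbf{V}^{(i)}y)$ and $(\mathbf{V}^{(i)})^\top\nabla_yH_d(\mathbf{U}^{(i)}x,\mathbf{V}^{(i)}y)$, while the $\Gamma_d^n$ part contributes $\frac{\lambda_1^2\alpha}{2n\lambda_2}\sum_j(\mathbf{U}^{(j)})^\top\nabla\Gamma_d(\mathbf{U}^{(j)}x)$, identically across $i$. Applying $(a+b)^2\le 2a^2+2b^2$ to split the two pieces, orthonormality gives $\|(\mathbf{U}^{(i)})^\top v\|=\|v\|$ and $\|(\mathbf{V}^{(i)})^\top w\|=\|w\|$, and the cross terms in the common $\Gamma_d^n$ part collapse via $\mathbf{U}^{(j)}(\mathbf{U}^{(k)})^\top=\mathbf{0}$ for $j\ne k$ to yield $\|\sum_j(\mathbf{U}^{(j)})^\top v_j\|^2=\sum_j\|v_j\|^2$. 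Then I would bound $\|\nabla H_d(z_1)-\nabla H_d(z_2)\|^2$ using $\|B_d\|^2\le 8$ (which follows from $B_d^\top B_d=A_d+e_{d+1}e_{d+1}^\top$ and standard tridiagonal eigenvalue bounds for $\alpha\le1$) and bound $\|\nabla\Gamma_d(u_1)-\nabla\Gamma_d(u_2)\|^2\le C_\gamma^2\|u_1-u_2\|^2$ via the uniform bound on $\Gamma''$ established in \citep{carmon2019lowerII}. Summing over $i$ and dividing by $n$ produces two contributions whose larger one gives the claimed $L_F^2$: the pure-$H_d$ piece contributes roughly $\frac{1}{n}(16\lambda_1^2+8\lambda_2^2)$ for the $y$-direction, while the $x$-direction absorbs both an $H_d$ piece $\frac{1}{n}\cdot 8\lambda_1^2$, the diagonal $\frac{\lambda_1^4\alpha^2}{\lambda_2^2}$ from the $x_{d+1}$ term of $H_d$, and $\frac{C_\gamma^2\lambda_1^4\alpha^2}{n^2\lambda_2^2}$ from the common $\Gamma_d^n$ piece (with an $n$ factor from summing $\sum_i\|\cdot\|^2$). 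Taking the max over the two directions yields the stated expression.

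Claim (iii) is then an algebraic verification: substituting $\lambda_1^2=\tfrac{nL^2}{40}$, $\lambda_2=\tfrac{n\mu}{2}$ and $\alpha=\tfrac{n\mu}{50L}$, the hypothesis $L\ge 2n\mu$ ensures $\alpha\le 1$; each summand inside the $\max$ defining $L_F^2$ simplifies to at most $L^2$, and $\mu_F=2\lambda_2/n=\mu$. The main obstacle is the bookkeeping in (i): keeping the $H_d$ and $\Gamma_d^n$ contributions separate so that the common (non-finite-sum) $\Gamma_d^n$ term does not destroy the average-smoothness rate, and correctly tracking the $\|B_d\|$ and $|\Gamma''|$ constants so that the constants match those stated; this is precisely where the orthogonality identities and the $\frac{1}{n}$ rescaling of $\Gamma_d^n$ in $\bar f_i$ have to work together.
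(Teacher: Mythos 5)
Your route is essentially the paper's: explicit gradient computations plus the orthogonality identities for (i)--(ii), closed-form maximization over $y$ with block decoupling and the identity \eqref{eq:Gamma_nd_equivalent} for (iv), and parameter substitution for (iii). The one concrete problem is your operator-norm bound $\|B_d\|^2\le 8$. The paper establishes the sharper bound $\|B_d x\|\le 2\|x\|$ (directly from $\|B_dx\|^2=x_{d+1}^2+\sum_i(x_i-x_{i+1})^2+\sqrt{\alpha}\,x_1^2\le 4\|x\|^2$), and this factor of $4$ rather than $8$ is exactly what produces the constants $16\lambda_1^2$ and $8\lambda_1^2$ in the stated $L_F$. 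With your looser bound the first entry of the max becomes $\tfrac1n(32\lambda_1^2+8\lambda_2^2)$, which under the substitution $\lambda_1^2=\tfrac{nL^2}{40}$, $\lambda_2=\tfrac{n\mu}{2}$, $L\ge 2n\mu$ evaluates to at most $\tfrac{4L^2}{5}+\tfrac{L^2}{2}>L^2$, so part (iii) would fail to verify. This is easily repaired by computing $\|B_d\|$ directly rather than via a generic tridiagonal eigenvalue bound, but as written the constants do not close. The remaining steps (the decoupled maximization, the compensation of the $-\frac{\lambda_1^2\alpha}{2\lambda_2}\Gamma_d$ deficit of $\max_y H_d$ by the added $\frac{\lambda_1^2\alpha}{2n\lambda_2}\Gamma_d^n$ term, and the Hessian computation for strong concavity) all match the paper's argument.
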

We defer the proof of Lemma \ref{lm:properties_hard_instance_FS_AS} to Appendix \ref{sec:Apdx_hard_instance_FS_AS_properties}. From Lemma \ref{LM:NCSC_LB_PHI}, we have
\begin{equation}
    \label{eq:hard_instance_FS_initial_gap}
	\begin{split}
		\bar{\Phi}(0)-\inf_{x\in\mathbb{R}^{n(d+1)}}\bar{\Phi}(x)
		=\ &
		\sup_{x\in\mathbb{R}^{n(d+1)}}\frac{1}{n}\sum_{i=1}^{n}\autopar{\bar{\Phi}(0)-\bar{\Phi}_i(x)}
		\leq
		\frac{1}{n}\sum_{i=1}^{n}\sup_{x\in\mathbb{R}^{d+1}}\autopar{\bar{\Phi}(0)-\bar{\Phi}_i(x)}\\
		=\ &
		\frac{1}{n}\sum_{i=1}^{n}\autopar{\sup_{x\in\mathbb{R}^{d+1}}\autopar{\Phi_d(0)-\Phi_d(\mathbf{U}^{(i)}x)}}
		\leq
		\frac{\lambda_1^2}{2\lambda_2}\autopar{\frac{\sqrt{\alpha}}{2}+10\alpha d}.
	\end{split}
\end{equation}
Define the index set $ \mathcal{I} $ as all the indices $i\in[n]$ such that $ u^{(i)}_d=u^{(i)}_{d+1}=0,\ \forall i\in\mathcal{I} $. Suppose that $ |\mathcal{I}|>\frac{n}{2} $, by orthogonality and Lemma \ref{LM:NCSC_LB_PHI} we have
\begin{equation}
    \label{eq:LB_FS_nonconvergence}
	\begin{split}
		\autonorm{\nabla \bar{\Phi}(x)}^2
		=\ &
		\autonorm{\frac{1}{n}\sum_{i=1}^n\nabla\bar{\Phi}_i(x)}^2
		=
		\autonorm{\frac{1}{n}\sum_{i=1}^n\nabla\autopar{\Phi_d\autopar{\mathbf{U}^{(i)}x}}}^2
		= 
		\frac{1}{n^2}\autonorm{\sum_{i=1}^n\autopar{\mathbf{U}^{(i)}}^\top\nabla\Phi_d\autopar{\mathbf{U}^{(i)}x}}^2\\
		=\ &
		\frac{1}{n^2}\sum_{i=1}^n\autonorm{\autopar{\mathbf{U}^{(i)}}^\top\nabla\Phi_d\autopar{\mathbf{U}^{(i)}x}}^2
		=
		\frac{1}{n^2}\sum_{i=1}^n\autonorm{\nabla\Phi_d\autopar{u^{(i)}}}^2
		\geq
		\frac{1}{n^2}\sum_{i\in\mathcal{I}}\autonorm{\nabla\Phi_d\autopar{u^{(i)}}}^2\\
		\geq\ &
		\frac{1}{n^2}\frac{n}{2}\autopar{\frac{\lambda_1^2}{8\lambda_2}\alpha^{\frac{3}{4}}}^2
		=
		\frac{\lambda_1^4}{128n\lambda_2^2}\alpha^{\frac{3}{2}}.
	\end{split}
\end{equation}
Now we arrive at our final theorem for the averaged smooth FS-NC-SC case as follows.

\begin{theorem}[LB for AS FS-NC-SC] 
\label{THM:LB_NCSC_FS_AS}
For the linear-span algorithm class $ \mathcal{A} $, parameters $ L,\mu,\Delta>0 $ and component size $n\in\mathbb{N}^+$, if $ L\geq 2n\mu>0 $, the accuracy $\epsilon$ satisfies that 
$
\epsilon^2
\leq
\min\autopar{
\frac{\sqrt{\alpha}L^2\Delta}{76800n\mu}, 
\frac{\alpha L^2\Delta}{1280n\mu}, 
\frac{L^2\Delta}{\mu}
}
$ where $\alpha=\frac{n\mu}{50L}\in\automedpar{0,1}$,
then we have
\begin{equation}
    \mathrm{Compl}_\epsilon\autopar{\mathcal{F}_{\mathrm{NCSC}}^{L,\mu,\Delta},\mathcal{A},\mathbb{O}_{\mathrm{IFO}}^{L,\mathrm{AS}}}
    =
    \Omega\autopar{n+\sqrt{n\kappa} \Delta L \epsilon^{-2}}.
\end{equation}
\end{theorem}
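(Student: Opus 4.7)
The plan is to construct a scaled version of the finite-sum hard instance $\bar{f}$ from \eqref{eq:LB_hard_instance_FS} and to lift the alternating zero-chain argument of Lemma \ref{LM:NCSC_LB_F_D} from a single function to an oracle-counting argument over the $n$ components. Concretely, I will work with $f(x,y)=\eta^2 \bar{f}(x/\eta,y/\eta)$, where $\lambda^*$ and $\alpha$ are as in Lemma \ref{lm:properties_hard_instance_FS_AS}(iii) and $\eta>0$ is an auxiliary scale to be tuned. Averaged smoothness, strong concavity and the finite-sum structure are preserved under this $\eta$-rescaling, so $f\in \mathcal{F}_{\mathrm{NCSC}}^{L,\mu,\Delta}$ once $\eta$ and $d$ are chosen so that $\eta^2(\bar{\Phi}(0)-\inf \bar{\Phi})\leq \Delta$ via \eqref{eq:hard_instance_FS_initial_gap}. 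The rescaling also gives $\nabla \Phi_f(x)=\eta\,\nabla \bar{\Phi}(x/\eta)$ and thus supplies the extra degree of freedom needed to trade the initial gap against the minimum nonzero gradient norm along the chain; without it one cannot pick up an $\epsilon^{-2}$ factor in the lower bound.

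Next I will extend the alternating zero-chain property of Lemma \ref{LM:NCSC_LB_F_D}(iii) to the IFO setting. The block form of $\mathbf{U}^{(i)}$ and $\mathbf{V}^{(i)}$ in \eqref{eq:LB_FS_Orthogonal_Matrix} ensures that $\nabla H_d(\mathbf{U}^{(i)}x,\mathbf{V}^{(i)}y;\lambda,\alpha)$ touches only coordinates inside the $i$-th chunk, and because $\Gamma'(0)=0$ the shared penalty $\Gamma_d^n$ preserves the zero-set of $x$. Hence one call to $\mathbb{O}_{\mathrm{IFO}}(\bar{f},\cdot,i)$ activates at most one new coordinate of $u^{(i)}\triangleq \mathbf{U}^{(i)}x$ (alternating with $v^{(i)}\triangleq \mathbf{V}^{(i)}y$) and leaves $u^{(j)},v^{(j)}$ for $j\neq i$ inside the span they already occupy. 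Letting $k_i(T)$ denote the number of IFO calls to component $i$ within $T$ total queries, this gives $u^{(i)}_d=u^{(i)}_{d+1}=0$ whenever $k_i(T)\leq d-1$; so if $T<(n/2)\,d$ then at least half the indices remain in $\mathcal{I}$ and \eqref{eq:LB_FS_nonconvergence} forces $\|\nabla\bar{\Phi}(x^T)\|^2\geq (\lambda_1^*)^4\alpha^{3/2}/(128\,n\,(\lambda_2^*)^2)$. To handle randomized linear-span algorithms under the complexity \eqref{eq:defn_LB_FS}, I will invoke Yao's minimax principle together with a uniformly random permutation of the component labels, which is admissible because $\bar{f}$ is permutation-invariant in its component indices; this reduces the randomized expected bound to the deterministic counting inequality above.

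In the final step I will tune parameters. Combining the non-convergence condition $\eta^2(\lambda_1^*)^4\alpha^{3/2}/(128\,n\,(\lambda_2^*)^2)>\epsilon^2$ with the initial-gap bound $\eta^2((\lambda_1^*)^2/(2\lambda_2^*))(\sqrt{\alpha}/2+10\alpha d)\leq \Delta$, eliminating $\eta^2$, and substituting $\lambda_1^*=\sqrt{n/40}\,L$, $\lambda_2^*=n\mu/2$, $\alpha=n/(50\kappa)$ yields $d=\Omega(\sqrt{\kappa}\,\Delta L/(\sqrt{n}\,\epsilon^2))$; the three upper bounds on $\epsilon^2$ in the theorem are exactly what is needed to guarantee that the feasible range for $\eta^2$ is non-empty and that the $10\alpha d$ term dominates $\sqrt{\alpha}/2$ in the initial-gap inequality. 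Combined with the preceding counting step, this delivers $T\geq (n/2)\,d=\Omega(\sqrt{n\kappa}\,\Delta L\,\epsilon^{-2})$. The additive $\Omega(n)$ term will come from a standard argument: if the algorithm makes fewer than $n$ IFO calls, some component $\bar{f}_j$ is never queried, and one can adversarially perturb that component to displace the primal stationary point while keeping the algorithm's view of the instance unchanged. The principal obstacle will be the randomization step, since unlike in Theorem \ref{THM:LB_NCSC_DETER} a randomized algorithm may adaptively select which component to query; the deterministic counting must be lifted to expectation via Yao's principle with careful coupling between the permutation and the algorithm's internal randomness. A secondary subtlety is verifying that the shared $\Gamma_d^n$ term does not leak activations across components, which relies crucially on $\Gamma'(0)=0$ together with the aligned block form of $\mathbf{U}^{(i)}$.
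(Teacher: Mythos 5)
Your proposal follows the paper's proof of the dominant $\Omega\left(\sqrt{n\kappa}\,\Delta L\,\epsilon^{-2}\right)$ term essentially verbatim: the same scaled instance $f=\eta^2\bar{f}(\cdot/\eta,\cdot/\eta)$ with $\lambda^*$, $\alpha=\frac{n\mu}{50L}$, the same observation that each IFO call to component $i$ advances only the alternating zero-chain of block $i$ (with $\Gamma'(0)=0$ preventing leakage through the shared $\Gamma_d^n$ term), the same counting $T\geq \frac{n}{2}\cdot\Theta(d)$ with $d=\Theta\left(\sqrt{\alpha}L^2\Delta/(n\mu\epsilon^2)\right)$, the same trade-off between $\eta^2$, the initial gap \eqref{eq:hard_instance_FS_initial_gap}, and the non-convergence bound \eqref{eq:LB_FS_nonconvergence}, and the same appeal to Yao's principle for the randomized complexity \eqref{eq:defn_LB_FS} (the paper is no more detailed than you are on the adaptive-query subtlety you flag).

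The one place you diverge is the additive $\Omega(n)$ term. The paper does not use an indistinguishability/perturbation argument on an unqueried component; it builds a second explicit instance $h_i(x,y)=\theta\langle v_i,x\rangle+L\langle x,y\rangle-\frac{\mu}{2}\|y\|^2$ with disjointly supported $v_i$, and argues that after at most $n/2$ IFO calls the iterate lies in the span of at most $n/2$ of the $v_i$, so at least half the coordinates of the (strongly convex) primal's gradient retain magnitude $\theta/n$, giving $\|\nabla\varphi(\hat{x}_t)\|\geq\frac{\theta}{n}\sqrt{d/2}\geq\epsilon$; the condition $\epsilon^2\leq L^2\Delta/\mu$ is exactly what makes $\theta$ large enough while keeping the initial gap below $\Delta$. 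Your resisting-oracle sketch can be made to work, but you would still have to verify that the perturbation of the unqueried component keeps the whole collection $L$-AS and keeps $\Phi(x_0)-\inf_x\Phi\leq\Delta$ while moving $\|\nabla\Phi\|$ at the output by more than $2\epsilon$; since a linear perturbation of one component shifts $\nabla\Phi$ by only a $1/n$ fraction of its magnitude, the required perturbation size is $\Theta(n\epsilon)$ and the gap constraint then forces precisely a condition of the form $\epsilon^2\lesssim L^2\Delta/\mu$ — so the paper's explicit construction is really the worked-out form of your sketch. Also note a small misattribution: you claim all three upper bounds on $\epsilon^2$ serve the $\eta$-tuning in the main term, but the third one, $\epsilon^2\leq L^2\Delta/\mu$, is consumed by the $\Omega(n)$ case, not by the zero-chain case.
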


The theorem above indicates that for any $\mathtt{A}\in\mathcal{A}$, we can construct a function $f(x,y)=\frac{1}{n}\sum_{i=1}^{n}f_i(x,y)$, such that $ f\in\mathcal{F}_{\mathrm{NCSC}}^{L,\mu,\Delta} $ and $ \{f_i\}_i $ is $ L $-AS, and $ \mathtt{A} $ requires at least $\Omega\autopar{n+\sqrt{n\kappa} \Delta L \epsilon^{-2}}$ IFO calls to attain an approximate stationary point of its primal function (in terms of expectation). 
The hard instance construction is based on $\bar{f}$ and $\bar{f}_i$ above \eqref{eq:LB_hard_instance_FS}, combined with a scaling trick similar to the one in the general case. Also we remark that lower bound holds for small enough $\epsilon$, while the requirement on $\epsilon$ is comparable to those in existing literature, e.g. \citep{zhou2019lower,han2021lower}. The detailed statement and proof of the theorem are deferred to Appendix \ref{sec:Apdx_THM_LB_NCSC_FS_AS}.

\begin{remark}[Tightness of Theorem \ref{THM:LB_NCSC_FS_AS}]
    The state-of-the-art upper bound for NC-SC finite-sum problems is $\tilde{O}(n+\sqrt{n}\kappa^2 \Delta L\epsilon^{-2})$ when $n \geq \kappa^2$ and $O\autopar{\autopar{n\kappa+\kappa^2} \Delta L\epsilon^{-2}}$ when $n \leq \kappa^2$ \citep{luo2020stochastic,xu2020enhanced}. Note that there is still a large gap between upper and lower bounds on the dependence in terms of $\kappa$ and $n$, which motivates the design of faster algorithms for FS-NC-SC case, we address this in Section \ref{sec:main_Catalyst}. \colorword{Note that a weaker result on the lower bound of nonconvex finite-sum averaged smooth minimization is $\Omega(\sqrt{n}\Delta L\epsilon^{-2})$ \citep{fang2018spider,zhou2019lower,li2020page}; here, our result presents explicitly the dependence on $\kappa$.}{black}
\end{remark}

\section{Faster Algorithms for NC-SC Minimax Problems}
\label{sec:main_Catalyst}

In this section, we introduce a generic Catalyst acceleration scheme that turns existing optimizers for (finite-sum) SC-SC minimax problems into efficient, near-optimal algorithms for (finite-sum) NC-SC minimax optimization.  Rooted in the inexact accelerated proximal point algorithm, the idea of Catalyst acceleration was introduced in \cite{lin2015universal} for convex minimization and later extended to nonconvex minimization in \cite{paquette2018catalyst} and nonconvex-concave minimax optimization in~\cite{yang2020catalyst}.
In stark contrast, we focus on NC-SC minimax problems. 

The backbone of our Catalyst framework is to repeatedly solve regularized subproblems of the form:
$$\min_{x\in \mathbb{R}^{d_1}}\max_{y\in \mathbb{R}^{d_2}} f(x, y) + L\|x-\Tilde{x}_t\|^2 - \frac{\tau}{2}\|y-\Tilde{y}_t\|^2,$$
where $\Tilde{x}_t$ and $\Tilde{y}_t$ are carefully chosen prox-centers, and the parameter $\tau\geq 0$ is selected such that the condition numbers for $x$-component and $y$-component of these subproblems are well-balanced.  Since $f$ is $L$-Lipschitz smooth and $\mu$-strongly concave in $y$,  the above auxiliary problem is $L$-strongly convex in $x$ and $(\mu+\tau)$-strongly concave in $y$. 
Therefore, it can be easily solved by a wide family of off-the-shelf first-order algorithms with linear convergence rate. 

Our Catalyst framework, presented in Algorithm \ref{catalyst ncc 1}, consists of three crucial components: an inexact proximal point step for primal update, an inexact accelerated proximal point step for dual update, and a linear-convergent algorithm for solving the subproblems.

\paragraph{Inexact proximal point step in the primal.} The $x$-update in the outer loop, $\{x_0^t\}_{t=1}^T$, can be viewed as applying an inexact proximal point method to the primal function $\Phi(x)$, requiring to solve the following sequence of auxiliary problems:
%a series of auxiliary problems as follows:
\begin{equation*}  \label{auxiliary prob ncc}
    \min_{x\in \mathbb{R}^{d_1}}\max_{y\in \mathbb{R}^{d_2}} \left[\hat{f}_{t}(x,y)\triangleq  f(x,y) + L\Vert x - x^t_{0}\Vert^2 \right].   \tag{$\star$} 
\end{equation*}
Inexact proximal point methods have been explored in minimax optimization in several work, e.g. \citep{lin2020near, rafique2018non}.  Our scheme is distinct from these work in two aspects: (i) we introduce a new subroutine to approximately solve the auxiliary problems (\ref{auxiliary prob ncc}) with near-optimal complexity, and (ii) the inexactness is measured by an adaptive stopping criterion using the gradient norms:
\begin{equation} \label{ncc criteion}
  \|\nabla \hat{f}_t(x^{t+1}_{0}, y^{t+1}_{0})\|^2 \leq \alpha_t\|\nabla \hat{f}_t(x^t_{0}, y^t_{0})\|^2, 
\end{equation}
where $\{\alpha_t\}_t$ is carefully chosen. Using the adaptive stopping criterion significantly reduces the complexity of solving the auxiliary problems. %As we will show in the next section, 
We will show that 
the number of steps required is only logarithmic in $L, \mu$ without any dependence on target accuracy $\epsilon$.   
 % (b) getting rid of requirement on boundedness of the dual domain, unlike \citep{lin2020near}.
Although the auxiliary problem is $(L, \mu)$-SC-SC and can be solved with linear convergence by algorithms such as extragradient, OGDA, etc., these algorithms are not optimal in terms of the dependency on the condition number when $L>\mu$ \citep{zhang2019lower}. 

\paragraph{Inexact accelerated proximal point step in the dual.}  To solve the auxiliary problem with optimal complexity,  we introduce an inexact accelerated proximal point scheme. The key idea is to add an extra regularization in $y$ to the objective such that the strong-convexity and strong-concavity are well-balanced. Therefore, we propose to iteratively solve the subproblems:
\begin{equation*}  \label{subprob}
    \min_{x\in \mathbb{R}^{d_1}}\max_{y\in \mathbb{R}^{d_2}} \left[\Tilde{f}_{t,k}(x,y)\triangleq  \hat{f}_t(x,y) - \frac{\tau}{2}\Vert y - z_k\Vert^2 \right],  \tag{$\star\star$} 
\end{equation*}
where $\{z_k\}_k$ is updated analogously to Nesterov's accelerated method 
%comes from Nesterov's acceleration 
\citep{nesterov2005smooth} and $\tau\geq 0$ is the regularization parameter. For example, by setting $\tau=L-\mu$, the subproblems become $(L,L)$-SC-SC and can be approximately solved by extragradient method with optimal complexity, to be discussed in more details in next section. Finally, when solving these subproblems, we use the following stopping criterion 
$\|\nabla \Tilde{f}_{t,k}(x, y)\|^2 \leq \epsilon^t_k$
 with time-varying accuracy $\epsilon^t_k$ that decays exponentially with $k$.  
 % The choice of $\tau$, warm-start and stopping criterion of the subproblem separate us from MINIMAX-APPA algorithm in \citep{lin2020near}, which require prefixing the target accuracy in solving the subproblems. We will see such difference enables us to solve (\ref{auxiliary prob ncc}) with the complexity of  $\log\frac{1}{\alpha_t}$ w.r.t. $\alpha_t$,
% %, which is not the case in \citep{lin2020near}, 
% and leveraging existing algorithms in both general and finite-sum problems. 

\begin{algorithm}[t] 
    \caption{Catalyst for NC-SC Minimax Problems}
    \setstretch{1.25}
    \begin{algorithmic}[1] \label{catalyst ncc 1}
        \REQUIRE objective $f$, initial point $(x_0, y_0)$, smoothness constant $L$, strong-concavity const.~$\mu$, and param.~$\tau>0$.
        \STATE Let $(x_{0}^0, y_{0}^0) = (x_0, y_0)$ and $q = \frac{\mu}{\mu+\tau}$.
        \FORALL{$t = 0,1,..., T$}
            \STATE Let $z_1 = y^t_{0}$ and $k=1$.
            \STATE Let $\hat{f}_{t}(x,y)\triangleq  f(x,y) + L\Vert x - x^t_{0}\Vert^2$.
            \REPEAT
               \STATE Find inexact solution $(x^t_{k}, y^t_{k})$ to the problem below by algorithm $\mathcal{M}$ with initial point $(x^t_{k-1},y^t_{k-1})$:
            %   \vspace{-3mm}
             \begin{equation*} 
             \begin{split}
                \min_{x\in \mathbb{R}^{d_1}}\max_{y\in \mathbb{R}^{d_2}}
                \automedpar{\Tilde{f}_{t,k}(x,y)\triangleq  f(x,y) +L\|x-x^t_{0}\|^2- \frac{\tau}{2}\Vert y - z_k\Vert^2}
            \end{split}\tag{$\star\star$} 
            \end{equation*}
            such that $\|\nabla \Tilde{f}_{t,k}(x^t_{k}, y^t_{k})\|^2 \leq \epsilon^t_k$.
               \STATE Let $z_{k+1} = y^t_{k} + \frac{\sqrt{q}-q}{\sqrt{q}+q}(y^t_{k}-y^t_{k-1}), k=k+1$.
            \UNTIL{$\|\nabla \hat{f}_t(x^t_{k}, y^t_{k})\|^2 \leq \alpha_t\|\nabla \hat{f}_t(x^t_{0}, y^t_{0})\|^2$} 
            \STATE Set $(x^{t+1}_{0}, y^{t+1}_{0}) = (x^t_{k}, y^t_{k})$.
        \ENDFOR
        \ENSURE $\hat{x}_{T}$, which is uniformly sampled  from $x^1_{0},...,x^T_{0}$.
    \end{algorithmic}
\end{algorithm}

\paragraph{Linearly-convergent algorithms for SC-SC subproblems.} 
Let $\mathcal{M}$ be any algorithm that solves the subproblem (\ref{subprob}) (denoting $(x^*, y^*)$ as the optimal solution) at a linear convergence rate such that after $N$ iterations: 
\begin{align}
    \Vert x_N - x^*\Vert^2+\Vert y_N-y^*\Vert^2
    \leq \left(1-\frac{1}{\Lambda^{\mathcal{M}}_{ \mu, L}(\tau)}\right)^N[\Vert x_0-x^*\Vert^2 + \Vert y_0-y^*\Vert^2],
\end{align}
if $\mathcal{M}$ is a deterministic algorithm; or 
taking expectation to the left-hand side above
%\begin{align*}
%    &\mathbb{E}[\Vert x_N - x^*\Vert^2+\Vert y_N-y^*\Vert^2] \\
%    &\leq \left(1-\frac{1}{\Lambda^{\mathcal{M}}_{ \mu, L}(\tau)}\right)^N[\Vert x_0-x^*\Vert^2 + \Vert y_0-y^*\Vert^2],
%\end{align*}
if $\mathcal{M}$ is randomized.  The choices for $\mathcal{M}$ include, but are not limited to, extragradient (EG) \citep{tseng1995linear}, optimistic gradient descent ascent (OGDA) \citep{gidel2018variational}, SVRG \citep{balamurugan2016stochastic}, SPD1-VR \citep{tan2018stochastic}, SVRE \citep{chavdarova2019reducing}, Point-SAGA \citep{luo2019stochastic}, and variance reduced prox-method \citep{carmon2019variance}.   For example, in the case of EG, $\Lambda^{\mathcal{M}}_{\mu, L}(\tau) = \frac{L+\max\{2L,\tau\}}{4\min\{L, \mu+\tau\}}$~\citep{tseng1995linear}.

\subsection{Convergence Analysis}
%We first present the outer-loop convergence.

In this section, we analyze the complexity of each of the three components we discussed. Let $T$ denote the outer-loop complexity, $K$  the inner-loop complexity, and $N$  the number of iterations for $\mathcal{M}$ (expected number if $\mathcal{M}$ is randomized) to solve subproblem  (\ref{subprob}). The total complexity of Algorithm \ref{catalyst ncc 1} is computed by multiplying $K, T$ and $N$. Later, we will provide a guideline for choosing parameter $\tau$ to achieve the best complexity, given an algorithm $\mathcal{M}$.

\begin{theorem}[Outer loop] \label{THM CATALYST NCSC}

Suppose function $f$ is NC-SC with strong convexity parameter $\mu$ and L-Lipschitz smooth. If we choose $\alpha_t = \frac{\mu^5}{504L^5}$ for $t>0$ and $\alpha_0 = \frac{\mu^5}{576\max\{1,L^7\}}$, the output $\hat{x}_T$ from Algorithm \ref{catalyst ncc 1} satisfies
\begin{align} \label{ncsc out complexity}
    \mathbb{E}\|\nabla\Phi(\hat{x}_T)\|^2 \leq 
    \frac{268L}{5T}\Delta + \frac{28L}{5T}D_y^0,
\end{align}
where $\Delta = \Phi(x_0) -\inf_{x}\Phi(x), D_y^0 = \|y_0 - y^*(x_0)\|^2 $ and  $y^*(x_0)=\argmax_{y\in\mathbb{R}^{d_2}} f(x_0, y)$. %Furthermore, there exists some point $x_i$ in $\{x_t\}_{t=1}^T$ such that $\|\nabla\Phi(x_i)\|^2$ is no greater than the right hand side of (\ref{ncsc out complexity}).
\end{theorem}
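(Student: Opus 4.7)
The plan is to treat Algorithm \ref{catalyst ncc 1} as an inexact proximal-point method applied to the primal function $\Phi$, and to couple primal progress with the dual tracking error $D_y^t := \|y_0^t - y^*(x_0^t)\|^2$ through a single Lyapunov inequality. Throughout, let $(x_t^*, y_t^*)$ denote the unique saddle point of the $(L,\mu)$-SC-SC auxiliary problem $\hat{f}_t$. Because $\hat{f}_t$ differs from $f$ only by an $x$-quadratic, $y_t^* = y^*(x_t^*)$; and $x_t^* = \argmin_x [\Phi(x) + L\|x-x_0^t\|^2]$ is the exact proximal-point iterate, whence by Danskin's theorem $\nabla\Phi(x_t^*) = -2L(x_t^* - x_0^t)$.

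First I would convert the adaptive stopping criterion \eqref{ncc criteion} into distance bounds. The saddle-operator associated with $\hat{f}_t$ is $\mu$-strongly monotone (since $\mu \le L$), so by Cauchy--Schwarz,
\begin{equation*}
    \|x_0^{t+1}-x_t^*\|^2 + \|y_0^{t+1}-y_t^*\|^2 \;\le\; \frac{1}{\mu^2}\|\nabla\hat{f}_t(x_0^{t+1},y_0^{t+1})\|^2 \;\le\; \frac{\alpha_t}{\mu^2}\|\nabla\hat{f}_t(x_0^t,y_0^t)\|^2 .
\end{equation*}
The regularization vanishes at the prox-center, so $\nabla\hat{f}_t(x_0^t,y_0^t) = \nabla f(x_0^t,y_0^t)$; combining this with $\nabla\Phi(x_0^t)=\nabla_x f(x_0^t,y^*(x_0^t))$ and the $L$-smoothness of $f$ bounds the right-hand side by $O(1)\|\nabla\Phi(x_0^t)\|^2 + O(L^2) D_y^t$.

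Next I would establish a one-step primal-descent inequality. The $L$-strong convexity of $\hat{\Phi}_t := \Phi + L\|\cdot -x_0^t\|^2$, inherited from the $x$-strong convexity of $\hat{f}_t$, gives $\Phi(x_t^*) \le \Phi(x_0^t) - \tfrac{3L}{2}\|x_t^* - x_0^t\|^2$. Using the $2\kappa L$-smoothness of $\Phi$ between $x_t^*$ and $x_0^{t+1}$, together with $\nabla\Phi(x_t^*) = -2L(x_t^*-x_0^t)$ and Young's inequality, I would derive
\begin{equation*}
    \Phi(x_0^{t+1}) \;\le\; \Phi(x_0^t) - \frac{1}{4L}\|\nabla\Phi(x_t^*)\|^2 + O(\kappa L)\,\|x_0^{t+1}-x_t^*\|^2 ,
\end{equation*}
and then trade $\|\nabla\Phi(x_t^*)\|^2$ for $\tfrac{1}{2}\|\nabla\Phi(x_0^{t+1})\|^2$ at the cost of an additional $O(\kappa^2 L^2)\|x_0^{t+1}-x_t^*\|^2$ term via the same smoothness bound. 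Independently, the $\kappa$-Lipschitz continuity of $y^*(\cdot)$ under $\mu$-strong concavity yields $D_y^{t+1} \le 2\|y_0^{t+1}-y_t^*\|^2 + 2\kappa^2 \|x_0^{t+1}-x_t^*\|^2$, after which Step~1 expresses both right-hand terms through $\|\nabla\Phi(x_0^t)\|^2$ and $D_y^t$.

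Choosing $\alpha_t = \Theta(\kappa^{-5})$ as prescribed by the theorem makes the coefficient of $D_y^t$ in the resulting recursion strictly below one, so the three ingredients above combine into a Lyapunov inequality of the form
\begin{equation*}
    V_{t+1} + \Omega(1/L)\,\|\nabla\Phi(x_0^{t+1})\|^2 \;\le\; V_t , \qquad V_t := \Phi(x_0^t) - \inf \Phi + cL\, D_y^t,
\end{equation*}
for a suitable absolute constant $c>0$. Telescoping over $t=0,\dots,T-1$ and using $V_0 \le \Delta + cL\,D_y^0$ yields $\sum_{t=1}^{T} \|\nabla\Phi(x_0^t)\|^2 = O(L)(\Delta + D_y^0)$, and dividing by $T$ delivers~\eqref{ncsc out complexity} since $\hat{x}_T$ is uniform over $\{x_0^1,\dots,x_0^T\}$. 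The main obstacle is balancing the four perturbation terms---inexactness of the prox step, the gradient transfer from $x_t^*$ to $x_0^{t+1}$, the movement of $y^*(x)$ between consecutive outer iterates, and the possibly large initial gradient magnitude---so that both the primal-descent coefficient and the $D_y^t$ contraction factor remain strictly positive at the prescribed $\Theta(\kappa^{-5})$ tolerance; the separate, tighter choice of $\alpha_0$ is dictated precisely by the fact that $\|\nabla\hat{f}_0(x_0^0,y_0^0)\|^2$ has not yet been reduced by any previous outer step and carries the full initial $D_y^0$.
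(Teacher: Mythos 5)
Your strategy is sound and would prove a bound of the form $\frac{CL}{T}(\Delta + D_y^0)$, but it travels a genuinely different technical route from the paper. The paper never forms a single Lyapunov function: it first proves convergence of the Moreau envelope $\|\nabla\Phi_{1/2L}(x^t)\|^2$ (Theorem \ref{ncc moreau complexity}) using the primal--dual gap $b_{t+1}=\gap_{\hat f_t}(x^{t+1},y^{t+1})$ as the inexactness measure, derives the geometric recursion $b_{t+1}\le \theta b_t + w\|\nabla\Phi_{1/2L}(x^t)\|^2$, unravels it and controls the resulting double sum, and only afterwards converts the gradient-norm criterion \eqref{ncc criteion} into the gap criterion (via Lemma \ref{criterion relation}) and transfers $\|\nabla\Phi_{1/2L}(x^t)\|^2$ to $\|\nabla\Phi(x^{t+1})\|^2$ at the cost of another $\frac{16L^3}{\mu^2}b_{t+1}$ term. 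You instead go directly through the $\mu$-strong monotonicity of the saddle operator to get distances to $(x_t^*,y_t^*)$, and fold the dual tracking error $D_y^t$ into $V_t=\Phi(x_0^t)-\inf\Phi+cL D_y^t$; this avoids the Moreau envelope, the gap, and the double-sum unraveling, and is arguably the cleaner bookkeeping. The two proofs rest on the same structural facts (prox descent up to inexactness, plus geometric contraction of the inexactness thanks to warm starts and the relative stopping rule), and your order-of-magnitude checks ($\alpha_t=\Theta(\mu^5/L^5)$ beats the required $O(\mu^4/L^4)$ threshold for both the contraction of $D_y^t$ and the absorption of the $\|\nabla\Phi(x_0^t)\|^2$ error into the previous step's descent) are consistent.

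Two points would need to be carried out explicitly to make this a complete proof. First, the $t=0$ term: your telescoping leaves an unabsorbed $O(\kappa^2 L\,\alpha_0/\mu^2)\|\nabla f(x_0,y_0)\|^2$, and you should bound it via $\|\nabla\Phi(x_0)\|^2\le 4\kappa L\,\Delta$ (from the $2\kappa L$-smoothness of $\Phi$) together with $\|\nabla f(x_0,y_0)\|^2\lesssim \|\nabla\Phi(x_0)\|^2 + L^2 D_y^0$, checking that $\alpha_0$ is small enough that the result is $O(\Delta + L D_y^0)$; the paper handles the analogous issue through the $L$-strong convexity of $\hat\Phi_0$ and the $\max\{1,L^7\}$ in $\alpha_0$ (which also covers the regime $L<1$ where some of these estimates degrade). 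Second, your sketch delivers unspecified absolute constants, whereas the statement asserts the specific values $\tfrac{268}{5}$ and $\tfrac{28}{5}$; those particular numbers fall out of the paper's choices $\theta=\tfrac27$, $w=\tfrac{5\mu^2}{112L^3}$, $\theta_0=\tfrac{\mu^2}{16L^2}$, and your route would produce different (though comparable) constants unless you tune $c$ and the Young's-inequality splittings to match.
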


This theorem implies that the algorithm finds an $\epsilon$ stationary point of $\Phi$ after inexactly solving (\ref{auxiliary prob ncc}) for $T = O\left( L(\Delta+D_y^0)\epsilon^{-2} \right)$ times. The dependency on $D_y^0$ can be eliminated if we select the initialization $y_0$ close enough to $y^*(x_0)$,  which only requires an additional logarithmic cost by maximizing a strongly concave function. %Noticeable, since the auxiliary problem is $(L,\mu)$-SC-SC, the complexity of applying Algorithm \ref{catalyst scsc} to solve the auxiliary problem  $\Tilde{O}\left(\Lambda^{\mathcal{M}}_{L, \mu, \Tilde{L}}(\tau)\sqrt{\frac{\mu+\tau}{\mu}}\right)$ by Theorem \ref{thm scsc total}, where $\Tilde{L}$ is the Lipchitz smoothness constant of the auxiliary problem. Now we are able to present the total complexity of Algorithm \ref{catalyst ncc}.

\begin{theorem}[Inner loop] \label{thm catalyst scsc}
Under the same assumptions in Theorem \ref{THM CATALYST NCSC}, if we choose $\epsilon^t_k = \frac{\sqrt{2}\mu}{2}(1-\rho)^k\gap_{\hat{f}_t}(x^t_{0}, y^t_{0})$ with $\rho < \sqrt{q}=\sqrt{\frac{\mu}{\mu+\tau}}$,  we have %$\|\nabla \hat{f}_t(x_{t,k}, y_{t,k})\|^2\leq \left[  \frac{48L^4}{\mu_x^2\mu_y^2(\sqrt{q}-p)^2} + \frac{\sqrt{2}L^2}{\mu_x\mu_y}\right](1-\rho)^{k}\|\nabla \hat{f}_t(x_{t,0}, y_{t,0})\|^2$.
%\iffalse
\begin{align*} \nonumber
    \|\nabla \hat{f}_t(x^t_{k}, y^t_{k})\|^2
    \leq \automedpar{\frac{5508L^2}{\mu^2(\sqrt{q}-\rho)^2} + \frac{18\sqrt{2}L^2}{\mu}}(1-\rho)^{k}\|\nabla \hat{f}_t(x^t_{0}, y^t_{0})\|^2.
\end{align*}
%\fi
\end{theorem}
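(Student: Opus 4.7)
\textbf{Proof Plan for Theorem \ref{thm catalyst scsc}.}

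The plan is to view the inner loop of Algorithm \ref{catalyst ncc 1} as an inexact accelerated proximal point method (APPA) applied to the $\mu$-strongly-concave dual function $\Psi_t(y)\triangleq\min_x \hat{f}_t(x,y)$. Because $\hat{f}_t$ is $L$-strongly convex in $x$ (the $L\|x-x^t_0\|^2$ term) and $\mu$-strongly concave in $y$, $\Psi_t$ is well-defined, smooth, and $\mu$-strongly concave. The inner subproblem $(\star\star)$ adds the $-\frac{\tau}{2}\|y-z_k\|^2$ regularizer, which is precisely the proximal step on $-\Psi_t$ at prox-center $z_k$, producing a well-balanced $(L,\mu+\tau)$-SC-SC problem; the $z_k$ momentum update with parameter $\sqrt{q}=\sqrt{\mu/(\mu+\tau)}$ matches G\"uler's accelerated proximal point scheme. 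So the task reduces to analyzing inexact APPA on $\Psi_t$ with errors controlled by the stopping criterion $\|\nabla \tilde{f}_{t,k}(x^t_k,y^t_k)\|^2\leq \epsilon^t_k$.

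I would carry out the argument in three steps. First, translate the gradient-norm stopping criterion into a primal-dual gap bound on $\tilde{f}_{t,k}$: since $\tilde{f}_{t,k}$ is $(L,\mu+\tau)$-SC-SC, a standard quadratic-growth argument yields $\gap_{\tilde{f}_{t,k}}(x^t_k, y^t_k)\leq \frac{1}{2L}\|\nabla_x \tilde{f}_{t,k}\|^2 + \frac{1}{2(\mu+\tau)}\|\nabla_y \tilde{f}_{t,k}\|^2 \leq \frac{\epsilon^t_k}{2\mu}$, which further gives $\|y^t_k - y^*_{t,k}\|^2 \leq \epsilon^t_k/\mu^2$ for the dual optimum $y^*_{t,k}$ of $\tilde{f}_{t,k}$. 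Second, invoke an inexact APPA Lyapunov argument in the spirit of \cite{lin2015universal,paquette2018catalyst,yang2020catalyst} adapted to the strongly-concave $\Psi_t$: with exact prox steps one obtains $\gap_{\hat{f}_t}(x^t_k,y^t_k)\leq (1-\sqrt{q})^k\gap_{\hat{f}_t}(x^t_0,y^t_0)$, and plugging in the inexact bound from step one propagates an extra additive geometric series $\sum_{j\leq k}(1-\sqrt{q})^{-j}\sqrt{\epsilon^t_j/\mu^2}$. Our choice $\epsilon^t_k=\frac{\sqrt{2}\mu}{2}(1-\rho)^k\gap_{\hat{f}_t}(x^t_0,y^t_0)$ with $\rho<\sqrt{q}$ makes this series bounded by a constant of order $1/(\sqrt{q}-\rho)$, yielding
\[
\gap_{\hat{f}_t}(x^t_k,y^t_k)\leq C(1-\rho)^k\gap_{\hat{f}_t}(x^t_0,y^t_0)
\]
with $C=O\!\left(1/(\sqrt{q}-\rho)^2\right)$, the squared factor arising because distances (rather than gaps) enter the Lyapunov function. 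Third, convert the gap bound back to a gradient-norm bound on $\hat{f}_t$: $L$-Lipschitz smoothness gives $\|\nabla \hat{f}_t(x^t_k,y^t_k)\|^2 \leq 2L^2(\|x^t_k-\hat{x}^*_t\|^2 + \|y^t_k-\hat{y}^*_t\|^2)$ at the saddle $(\hat{x}^*_t,\hat{y}^*_t)$, while $\mu$-SC-SC of $\hat{f}_t$ yields $\|x^t_k-\hat{x}^*_t\|^2 + \|y^t_k-\hat{y}^*_t\|^2 \leq (2/\mu)\gap_{\hat{f}_t}(x^t_k,y^t_k)$. Combining and using the analogous lower bound $\gap_{\hat{f}_t}(x^t_0,y^t_0)\leq \|\nabla\hat{f}_t(x^t_0,y^t_0)\|^2/\mu$ (again from quadratic growth) produces the stated factor $\frac{5508L^2}{\mu^2(\sqrt{q}-\rho)^2}+\frac{18\sqrt{2}L^2}{\mu}$, where the first term tracks the error-accumulation constant and the second comes from the conversion between gap and gradient norm at the initial point.

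The main obstacle is the second step, namely the inexact accelerated proximal point analysis on $\Psi_t$. With exact proximal steps the estimate-sequence/Lyapunov argument of G\"uler gives the clean $(1-\sqrt{q})^k$ rate, but here the inner oracle only guarantees a gradient-norm bound on the subproblem, so one must carefully propagate the approximation error $\sqrt{\epsilon^t_k/\mu^2}$ through the Lyapunov inequality and verify that the exponentially decaying schedule $(1-\rho)^k$ with $\rho<\sqrt{q}$ is tight enough to preserve linear convergence. This is the nontrivial bookkeeping that produces the $1/(\sqrt{q}-\rho)^2$ constant, and it is where our analysis differs from the nonconvex-concave Catalyst of \cite{yang2020catalyst} (where the outer problem is not strongly concave) and from the convex Catalyst of \cite{lin2015universal} (where there is no primal-dual structure). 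All remaining steps---the gradient-to-gap conversions and the final combination of constants---are routine consequences of Assumption \ref{main assumption} and the definition of $\hat{f}_t$.
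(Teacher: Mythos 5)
Your plan follows essentially the same route as the paper: the inner loop is read as an inexact accelerated proximal point method on the dual $\hat{\Psi}_t(y)=\min_x\hat{f}_t(x,y)$, the gradient-norm stopping rule is converted to a gap bound via strong convexity, the Catalyst estimate-sequence lemma of \citep{lin2018catalyst} with the geometric schedule $(1-\rho)^k$, $\rho<\sqrt{q}$, produces the $1/(\sqrt{q}-\rho)^2$ constant, and smoothness plus quadratic growth convert back to gradient norms at the start and end. The one point your sketch glosses over is that the APPA Lyapunov function controls only the dual quantities ($\hat{\Psi}^*-\hat{\Psi}(y^t_k)$ and hence $\|y^t_k-y^*\|$), not the full primal-dual gap of $\hat{f}_t$ at $(x^t_k,y^t_k)$; the primal distance $\|x^t_k-x^*\|$ must be recovered separately from the inexactness of the \emph{last} subproblem (using that $\tilde{f}_{t,k}(\cdot,y^t_k)$ and $\hat{f}_t(\cdot,y^t_k)$ differ by a constant in $x$ and so share the same minimizer $x^*(y^t_k)$) combined with the Lipschitz continuity of $x^*(\cdot)$ --- exactly the extra step the paper carries out before the final gradient-norm conversion.
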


Particularly, setting $\rho = 0.9\sqrt{q}$, Theorem \ref{thm catalyst scsc} implies after inexactly solving (\ref{subprob}) for $K = \Tilde{O}\left(\sqrt{(\tau+\mu)/\mu}\log\frac{1}{\alpha_t}\right)$ times, the stopping criterion (\ref{ncc criteion}) is satisfied. This complexity decreases with $\tau$. However, we should not choose $\tau$ too small, because the smaller $\tau$ is, the harder it is for $\mathcal{M}$ to solve (\ref{subprob}). The following theorem captures the complexity for algorithm $\mathcal{M}$ to solve the subproblem.

\iffalse
\begin{theorem}[Solving ($\star\star$)]
Under the same assumption as Theorem \ref{thm ncsc out}, with point $(x_0, y_0)$ we initialize $y$ with some algorithm $\mathcal{A}$ to find $\Tilde{y}_0$ such that $\|\Tilde{y}_0-y^*(x_0)\|\leq \frac{\epsilon}{160L}$, and we apply Algorithm \ref{catalyst ncc} with initial point $(x_0, \Tilde{y}_0)$. To find an $\epsilon$-stationary point of $f$, the total number (expected number if $\mathcal{M}$ is stochastic) of gradient evaluations is
\begin{equation*}
     N = \Tilde{O}\left(\frac{\Lambda^{\mathcal{M}}_{L, \mu, \Tilde{L}}(\tau)L\Delta }{\epsilon^2}\sqrt{\frac{\mu_y+\tau}{\mu_y}} + T_{\mathcal{A}}\right),
\end{equation*}
where   $\Tilde{L}$ is the Lipchitz smoothness (or average/individual smooth) constant of the auxiliary problem and $ T_{\mathcal{A}}$ is the complexity of initialization.
\end{theorem}
\fi

% \vspace{-3mm}
\begin{theorem}[Complexity of solving subproblems ($\star\star$)] \label{thm catalyst inner}
Under the same assumptions in Theorem \ref{THM CATALYST NCSC} and the choice of $\epsilon_k^t$ in Theorem \ref{thm catalyst scsc},  the number of iterations (expected number of iterations if $\mathcal{M}$ is stochastic) for $\mathcal{M}$ to solve (\ref{subprob}) such that $\|\nabla \Tilde{f}_{t,k}(x, y)\|^2 \leq \epsilon^t_k$ is $$N = O \left(\Lambda^{\mathcal{M}}_{\mu, L}(\tau) \log\left(\frac{\max\{1,L,\tau\}}{\min\{1,\mu \}}  \right)\right).$$
\end{theorem}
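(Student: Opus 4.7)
The plan is to bound the number of $\mathcal{M}$-steps required to drive $\|\nabla\tilde{f}_{t,k}(x,y)\|^2$ below $\epsilon^t_k$, warm-started at $(x^t_{k-1},y^t_{k-1})$, by combining the linear convergence of $\mathcal{M}$ on the SC-SC subproblem with a uniform (in $k$) bound on the ratio of initial iterate distance to $\epsilon^t_k$.

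First, I would verify the curvature properties of the inner subproblem. Since $f$ is $L$-Lipschitz smooth and the regularizer $L\|x-x^t_0\|^2$ has Hessian $2L\cdot I$, the function $\tilde{f}_{t,k}$ is $L$-strongly convex in $x$; subtracting $\tfrac{\tau}{2}\|y-z_k\|^2$ from the $\mu$-strongly concave function $f(x,\cdot)$ yields $(\mu+\tau)$-strong concavity in $y$. The joint smoothness is $L_{\star\star}=O(L+\tau)$. Denoting the saddle point of $\tilde{f}_{t,k}$ by $(x_k^*,y_k^*)$ and setting $\mu_{\star\star}=\min\{L,\mu+\tau\}$, both directions of the standard saddle-gradient bound hold:
\[
\mu_{\star\star}\|(x,y)-(x_k^*,y_k^*)\| \le \|\nabla\tilde{f}_{t,k}(x,y)\| \le L_{\star\star}\|(x,y)-(x_k^*,y_k^*)\|.
\]
Combining the right-hand inequality with the linear convergence of $\mathcal{M}$, after $N$ steps initialized at $(x^t_{k-1},y^t_{k-1})$,
\[
\mathbb{E}\|\nabla\tilde{f}_{t,k}(x_N,y_N)\|^2 \le L_{\star\star}^2\left(1-\tfrac{1}{\Lambda^{\mathcal{M}}_{\mu,L}(\tau)}\right)^N D^{(k)}_0, \qquad D^{(k)}_0 \triangleq \|(x^t_{k-1},y^t_{k-1})-(x_k^*,y_k^*)\|^2,
\]
so it suffices to take $N \ge \Lambda^{\mathcal{M}}_{\mu,L}(\tau)\cdot\log\!\left(L_{\star\star}^2 D^{(k)}_0/\epsilon^t_k\right)$.

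The bulk of the work is to show that $L_{\star\star}^2 D^{(k)}_0/\epsilon^t_k$ is bounded by a polynomial in $L,\mu,\tau$ \emph{uniformly} over $k$. I would split through the outer-problem saddle $(x_*^t,y_*^t)$ of $\hat{f}_t$ by the triangle inequality:
\[
D^{(k)}_0 \le 2\|(x^t_{k-1},y^t_{k-1})-(x_*^t,y_*^t)\|^2 + 2\|(x_*^t,y_*^t)-(x_k^*,y_k^*)\|^2.
\]
For the first term, Theorem \ref{thm catalyst scsc} gives $\|\nabla\hat{f}_t(x^t_{k-1},y^t_{k-1})\|^2 = O((1-\rho)^{k-1}\|\nabla\hat{f}_t(x^t_0,y^t_0)\|^2)$; since $\hat{f}_t$ is $L$-strongly convex in $x$ and $\mu$-strongly concave in $y$, this converts to $\|(x^t_{k-1},y^t_{k-1})-(x_*^t,y_*^t)\|^2 = O(\mu^{-2}(1-\rho)^{k-1}\|\nabla\hat{f}_t(x^t_0,y^t_0)\|^2)$. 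For the second term, the optimum of an $\mu_{\star\star}$-strongly-monotone SC-SC problem is Lipschitz in the prox-center with constant $\tau/\mu_{\star\star}$, giving $\|(x_*^t,y_*^t)-(x_k^*,y_k^*)\| \le (\tau/\mu_{\star\star})\|z_k-y_*^t\|$; a companion Lyapunov argument on the accelerated recursion $z_{k+1}=y^t_k + \tfrac{\sqrt{q}-q}{\sqrt{q}+q}(y^t_k - y^t_{k-1})$, together with the inexactness tolerated in Theorem \ref{thm catalyst scsc}, gives $\|z_k-y_*^t\|^2 = O(\mu^{-2}(1-\rho)^{k})$. Substituting $\epsilon^t_k = \tfrac{\sqrt{2}\mu}{2}(1-\rho)^k \gap_{\hat{f}_t}(x^t_0, y^t_0)$ and bounding $\|\nabla\hat{f}_t(x^t_0,y^t_0)\|^2$ in terms of $\gap_{\hat{f}_t}(x^t_0,y^t_0)$ via smoothness and strong monotonicity, the $(1-\rho)^k$ factors cancel, leaving $L_{\star\star}^2 D^{(k)}_0/\epsilon^t_k$ bounded by a polynomial in $L,\mu,\tau$. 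Taking the log yields $N = O(\Lambda^{\mathcal{M}}_{\mu,L}(\tau) \log(\max\{1,L,\tau\}/\min\{1,\mu\}))$.

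The trickiest piece is the Lyapunov analysis of the accelerated recursion under inexact subproblem solutions, which is where the choice $\rho<\sqrt{q}$ from Theorem \ref{thm catalyst scsc} becomes essential: the gap between the acceleration rate $\sqrt{q}$ and the subproblem-accuracy rate $\rho$ provides enough slack to absorb the $\epsilon^t_k$-level error while still guaranteeing geometric decay of both $\|y^t_k-y_*^t\|$ and $\|z_k-y_*^t\|$ at rate $(1-\rho)^k$, which is precisely what is needed for the cancellation above to go through independently of $k$.
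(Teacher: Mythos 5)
Your proposal is correct and reaches the theorem by the same two-step skeleton as the paper (linear convergence of $\mathcal{M}$ plus a $k$-uniform bound on the ratio of warm-start distance to target accuracy $\epsilon^t_k$), but the key warm-start estimate is obtained by a genuinely different decomposition. The paper's Lemma \ref{warm start scsc} splits $\|(x^t_{k-1},y^t_{k-1})-(x^*_k,y^*_k)\|^2$ through the \emph{previous subproblem's} saddle $(x^*_{k-1},y^*_{k-1})$, controls the first piece by $\gap_{\tilde f_{k-1}}(x^t_{k-1},y^t_{k-1})\le\hat\epsilon_{k-1}$, and controls $\|y^*_k-y^*_{k-1}\|\le\frac{\tau}{\mu+\tau}\|z_k-z_{k-1}\|$ via a consecutive-prox-center perturbation argument, with $\|z_k-z_{k-1}\|^2$ bounded by $\max\{S_{k-1},S_{k-2},S_{k-3}\}$. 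You instead split through the \emph{outer} saddle of $\hat f_t$, invoke Theorem \ref{thm catalyst scsc} as a black box for the distance of the current iterate to that saddle, and use a perturbation bound relative to the reference prox-center $y^t_*$ (for which the subproblem's saddle coincides with the outer saddle). Both routes ultimately rest on the same engine — the potential $S_k\le\frac{8}{(\sqrt q-\rho)^2}(1-\rho)^{k+1}\gap_{\hat f_t}(x^t_0,y^t_0)$ from the proof of Theorem \ref{thm catalyst scsc} — so the "companion Lyapunov argument" you flag as the trickiest piece is in fact already available there and needs no new analysis; your route is arguably more modular, at the cost of slightly looser constants (e.g.\ $\tau/\min\{L,\mu+\tau\}$ versus the paper's $\tau/(\mu+\tau)$), which is immaterial inside the logarithm. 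Two presentational points you should make explicit: the argument is an induction on $k$ (the bound for subproblem $k$ assumes the stopping criteria were met for all $i<k$, exactly as in the paper's Lemma \ref{warm start scsc}), and the first iteration $k=1$ with $z_1=y^t_0$ needs its own short base case since Theorem \ref{thm catalyst scsc}'s decay and the $z$-recursion only kick in from $k\ge1$.
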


The above result implies that the subproblems can be solved within constant iterations that only depends on $L, \mu, \tau$ and $\Lambda^{\mathcal{M}}_{\mu, L}$. This largely benefits from the use of warm-starting and stopping criterion with time-varying accuracy.  In contrast, other inexact proximal point algorithms in minimax optimization, such as \citep{yang2020catalyst, lin2020near}, fix the target accuracy, thus their complexity of solving the subproblems usually has an extra logarithmic factor in  $1/\epsilon$. 

% The main idea to prove this theorem is to show that the initial point $(x^t_{k-1},y^t_{k-1})$ is not "too far" from the optimal solution of (\ref{subprob}), and therefore we can find an inexact solution satisfying the stopping criterion with constant time only depending on $L, \tau$ and $\Lambda^{\mathcal{M}}_{\mu, L}$. 

The overall complexity of the algorithm follows immediately after combining the above three theorems: 
\begin{corollary} \label{THM CATALYST TOTAL}
Under the same assumptions in Theorem \ref{THM CATALYST NCSC} and setting in Theorem \ref{thm catalyst scsc},
the total number (expected number if $\mathcal{M}$ is randomized) of gradient evaluations for Algorithm \ref{catalyst ncc 1} to find an $\epsilon$-stationary point of $\Phi$, is
\begin{equation}
    \Tilde{O}\left(\frac{\Lambda^{\mathcal{M}}_{\mu, L}(\tau)L(\Delta+D_y^0) }{\epsilon^2}\sqrt{\frac{\mu+\tau}{\mu}} \right).
\end{equation}
\end{corollary}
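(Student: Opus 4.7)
The corollary is essentially a multiplicative compounding of the three complexity bounds already established in Theorems 4.1, 4.2, and 4.3, so the plan is assembly rather than new analysis.

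\textbf{Step 1: Determine the outer iteration count.} I would first invoke Theorem \ref{THM CATALYST NCSC}. Under the prescribed choices of $\{\alpha_t\}$, the right-hand side of (\ref{ncsc out complexity}) is $O(L(\Delta + D_y^0)/T)$. Setting this $\leq \epsilon^2$ to guarantee $\mathbb{E}\|\nabla \Phi(\hat{x}_T)\|^2 \leq \epsilon^2$, it suffices to take
\begin{equation*}
    T = O\!\left( \frac{L(\Delta + D_y^0)}{\epsilon^2}\right).
\end{equation*}

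\textbf{Step 2: Bound the inner iteration count per outer step.} For each outer iteration $t$, Theorem \ref{thm catalyst scsc} (applied with $\rho = 0.9\sqrt{q}$) implies that the stopping criterion (\ref{ncc criteion}), $\|\nabla \hat{f}_t(x^t_k,y^t_k)\|^2 \leq \alpha_t \|\nabla\hat{f}_t(x^t_0,y^t_0)\|^2$, is achieved within
\begin{equation*}
    K = O\!\left(\sqrt{\tfrac{\mu + \tau}{\mu}}\,\log\tfrac{1}{\alpha_t}\right)
\end{equation*}
inner iterations. Since the prescribed $\alpha_t$ is polynomial in $L$ and $\mu$, we have $\log(1/\alpha_t) = O(\log(L/\mu))$, which is absorbed into $\tilde{O}(\cdot)$.

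\textbf{Step 3: Bound the subroutine cost.} For each subproblem ($\star\star$), Theorem \ref{thm catalyst inner} yields a (possibly expected) cost of
\begin{equation*}
    N = O\!\left( \Lambda^{\mathcal{M}}_{\mu,L}(\tau) \log\tfrac{\max\{1,L,\tau\}}{\min\{1,\mu\}}\right) = \tilde{O}\!\left(\Lambda^{\mathcal{M}}_{\mu,L}(\tau)\right)
\end{equation*}
gradient queries to $\mathcal{M}$, using the warm-start $(x^t_{k-1},y^t_{k-1})$ and the time-varying accuracy $\epsilon^t_k$ specified in Theorem \ref{thm catalyst scsc}.

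\textbf{Step 4: Combine.} The total number of gradient evaluations is $T \cdot K \cdot N$; plugging in the three bounds and collecting the polylogarithmic factors into $\tilde{O}$ yields the claimed bound
\begin{equation*}
    \tilde{O}\!\left(\frac{\Lambda^{\mathcal{M}}_{\mu,L}(\tau) \, L(\Delta + D_y^0)}{\epsilon^2}\sqrt{\frac{\mu + \tau}{\mu}}\right).
\end{equation*}
The only nontrivial book-keeping concerns the randomized case: $K$ itself becomes a stopping time and $\hat{x}_T$ is sampled uniformly from the outer iterates, so one has to take nested expectations (inner randomness of $\mathcal{M}$ conditional on the outer trajectory, then the sampling of $\hat{x}_T$) and apply Wald-style totals; since each of the three bounds is in expectation and the inner/outer loops are independent, the product rule for expectations gives the stated bound without extra factors. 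This, together with verifying that $\log(1/\alpha_t)$ and the subproblem log factor in Step 3 are both polylogarithmic in $L,\mu,\tau$ (hence inside $\tilde{O}$), is the one place where care is needed, but it introduces no new difficulty.
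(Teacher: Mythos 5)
Your proposal is correct and follows essentially the same route as the paper: read off $T$ from Theorem \ref{THM CATALYST NCSC}, $K$ from Theorem \ref{thm catalyst scsc} with $\rho=0.9\sqrt{q}$ (noting $\log(1/\alpha_t)$ is polylogarithmic in $L,\mu$), $N$ from Theorem \ref{thm catalyst inner}, and multiply. Your extra remark on handling expectations in the randomized case is a reasonable elaboration of a point the paper leaves implicit, but it changes nothing substantive.
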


In order to minimize the total complexity, we should choose the regularization parameter $\tau$ that minimizes $\Lambda^{\mathcal{M}}_{\mu, L}(\tau)\sqrt{\mu+ \tau}$.

\subsection{Specific Algorithms and Complexities} \label{subsec algorithms}

In this subsection, we discuss specific choices for $\mathcal{M}$ and the corresponding optimal choices of $\tau$, as well as the resulting total complexities for solving NC-SC problems.  

\paragraph{Catalyst-EG/OGDA algorithm.} 
%First, the complexity for initialization of $y_0$ with accelerated gradient ascent is $\Tilde{O}\left(\sqrt{\frac{L}{\mu}}\log\left(\frac{D_y^0}{\epsilon}\right)\right)$. 
When solving NC-SC minimax problems in the general setting,  we set $\mathcal{M}$ to be either extra-gradient method (EG) or optimistic gradient descent ascent (OGDA). Hence, we have $\Lambda^{\mathcal{M}}_{\mu, L}(\tau) = \frac{L+\max\{2L,\tau\}}{4\min\{L, \mu+\tau\}}$~\citep{tseng1995linear, gidel2018variational, azizian2020tight}. Minimizing $\Lambda^{\mathcal{M}}_{\mu, L}(\tau)\sqrt{\mu+\tau}$ yields that the optimal choice for $\tau$ is $L - \mu$. This leads to a total complexity of
%$$ \Tilde{O}\left(\frac{L^{\frac{3}{2}}\Delta}{\sqrt{\mu}\epsilon^2}+  \sqrt{\frac{L}{\mu}}\log\left(\frac{D_y^0}{\epsilon}\right)\right).$$
%$$ \Tilde{O}\left(\sqrt{\kappa}L(\Delta+D_y^0)\epsilon^{-2}\right).$$
\begin{equation}
   \Tilde{O}\left(\sqrt{\kappa}L(\Delta+D_y^0)\epsilon^{-2}\right).
\end{equation}

\begin{remark}
The above complexity matches the lower bound in Theorem \ref{THM:LB_NCSC_DETER}, up to a logarithmic factor in $L$ and $\kappa$. It improves over Minimax-PPA \citep{lin2020near} by $\log^2(1/\epsilon)$, GDA \citep{lin2020gradient} by $\kappa^{\frac{3}{2}}$ and therefore achieves the best of two worlds in terms of dependency on $\kappa$ and $\epsilon$. In addition, our Catalyst-EG/OGDA algorithm does not require the bounded domain assumption on $y$, unlike \citep{lin2020near}.
\end{remark}

\paragraph{Catalyst-SVRG/SAGA algorithm.}  When solving NC-SC minimax problems in the  averaged smooth finite-sum setting,  we  set $\mathcal{M}$ to be either SVRG or SAGA. Hence, we have $\Lambda^{\mathcal{M}}_{\mu, L}(\tau) \propto n+ \big(\frac{L+\sqrt{2}\max\{2L,\tau\}}{\min\{L, \mu+\tau\}}\big)^2$~\citep{balamurugan2016stochastic}\footnote{Although \cite{balamurugan2016stochastic} assumes individual smoothness, their analysis can be extended to average smoothness.}. Minimizing $\Lambda^{\mathcal{M}}_{\mu, L}(\tau)\sqrt{\mu+\tau}$, the best choice for $\tau$ is (proportional to) $\max\Big\{{\frac{L}{\sqrt{n}}}-\mu, 0 \Big\}$, which leads to the total complexity of 
%$$\tilde{O}\left(\left(n+n^{\frac{3}{4}}\sqrt{\frac{L}{\mu}}  \right)\frac{L\Delta}{\epsilon^2} + \left(n+n^{\frac{3}{4}}\sqrt{\frac{L}{\mu}}\right)\log\left(\frac{D_y^0}{\epsilon}\right) \right).$$
%$$\tilde{O}\left(\left(n+n^{\frac{3}{4}}\sqrt{\kappa}  \right)L(\Delta+D_y^0)\epsilon^{-2}  \right).$$
\begin{equation}
  \tilde{O}\left(\left(n+n^{\frac{3}{4}}\sqrt{\kappa}  \right)L(\Delta+D_y^0)\epsilon^{-2}  \right).  
\end{equation}

\begin{remark}
According to the lower bound established in Theorem \ref{THM:LB_NCSC_FS_AS}, the dependency on $\kappa$ in the above upper bound is nearly tight, up to logarithmic factors.   Recall that SREDA \citep{luo2020stochastic} and SREDA-boost \citep{xu2020enhanced} achieve the complexity of $\Tilde{O}\left(\kappa^{2} \sqrt{n} \epsilon^{-2}+n+(n+\kappa) \log (\kappa)\right)$ for $n \geq \kappa^{2}$ and $O\left(\left(\kappa^{2}+\kappa n\right) \epsilon^{-2}\right)$ for $n \leq \kappa^{2}$. Hence, our Catalyst-SVRG/SAGA algorithm attains better complexity in the regime $n\leq \kappa^4$. Particularly, in the critical regime $\kappa = \Omega(\sqrt{n})$ arising in statistical learning \citep{shalev2014understanding}, our algorithm performs strictly better. 

%  but it is potentially sub-optimal in $n$.
\end{remark}

\section{Conclusion}
\label{sec:main_conclusion}

In this work, we take an initial step towards understanding the fundamental limits of minimax optimization in the nonconvex-strongly-concave setting for both general and finite-sum cases, and bridge the gaps between lower and upper bounds. It remains interesting to investigate whether the dependence on $n$ can be further tightened in the complexity for finite-sum NC-SC minimax optimization. 

\clearpage

\bibliographystyle{plainnat}
\bibliography{ref-new}

\clearpage

\appendix

\section{Notations}

\label{sec:Apdx_notations}
For convenience, we summarize some of the notations used in the paper.
\begin{itemize}[itemsep=2pt]
    \item SC / C / NC / WC: strongly convex, convex, nonconvex, weakly-convex.
    \item FS: finite-sum.
    \item $L$-S: $L$-Lipschitz smooth. $L$-IS / AS: $L$-Lipschitz individual / averaged smoothness.
    \item SOTA: state-of-the-art, LB / UB: lower / upper bound
    \item FO / IFO: first-order oracle, incremental first-order oracle, denoted by $\mathbb{O}_{\mathrm{FO}}$ and $\mathbb{O}_{\mathrm{IFO}}$.
    \item $\mathcal{A}$:linear-span first-order algorithm class. 
    % \item $\mathcal{A}_{\texttt{D}}$ / $\mathcal{A}_{\texttt{R}}$: linear-span deterministic / randomized first-order algorithm class. 
    \item $\Phi(x)$, $\Psi(y)$: primal and dual functions of $f(x,y)$.
    \item $ \nabla_x f $, $ \nabla_y f $: gradients of a function $F$ with respect to $x$ and $y$. Also we set $\nabla f=\autopar{\nabla_x f, \nabla_y f}$.
    \item $ \nabla_{xx}^2 f $, $ \nabla_{xy}^2 f $, $ \nabla_{yx}^2 F $, $ \nabla_{yy}^2 f $: the Hessian of $F(x,y)$ with respect to different components.
    \item $ \{\mathbf{U}^{(i)}\}_{i=1}^n \in \mathrm{\mathbf{Orth}}(a,b,n) $: a matrix sequence where if for each $ i, j\in[1, n] $ and $ i\neq j $, $ \mathbf{U}^{(i)},\mathbf{U}^{(j)}\in\mathbb{R}^{a\times b} $ and  $ \mathbf{U}^{(i)}(\mathbf{U}^{(i)})^\top=\mathbf{I}\in\mathbb{R}^{a\times a} $ and $\mathbf{U}^{(i)}(\mathbf{U}^{(j)})^\top=\mathbf{0}\in\mathbb{R}^{a\times a}$. Sometimes we use $u^{(i)}\triangleq\mathbf{U}^{(i)}x$.
    \item $e_i$: unit vector with the $i$-th element as $1$.
    \item $0$: zero scalars or vectors.
    \item 
    $
    \mathcal{X}_k=\mathrm{Span}\{e_1,e_2,\cdots,e_k\}, \mathcal{Y}_k=\mathrm{Span}\{e_{d+1},e_d,\cdots,e_{d-k+2}\}, \mathcal{X}_0=\mathcal{Y}_0=\{0\}
    $.
    %\item $\triangleq$: denote, $\Rightarrow$: imply.
    \item $a\vee b\triangleq\max\autobigpar{a, b}$, $a\wedge b\triangleq\min\autobigpar{a, b}$.
    \item $\autonorm{\cdot}$: $\ell_2$-norm.
    \item $\mathbb{N}^+$: all positive integers.
    \item $\mathbb{N}$: all nonnegative integers.
    \item $\dom f$: the domain of a function $f$.
    \item $d_1, d_2\in\mathbb{N}^+$: dimension numbers of $x$ and $y$.
    \item $x_d$: the $d$-th coordinate of $x$, $x^t$: the variable $x$ in the $t$-th iteration (in Section \ref{sec:main_LB_NCSC} and Appendix \ref{sec:Apdx_THM_LB_NCSC} only)
\end{itemize}

\section{Useful Lemmas and Proofs of Section \ref{SEC:MAIN_PRELIM}}

\begin{lemma} [Lemma B.2 \citep{lin2020near}] \label{lin's lemma}
Assume $f(\cdot, y)$ is $\mu_x$-strongly convex for $\forall y\in\mathbb{R}^{d_2}$ and $f(x, \cdot)$ is $\mu_y$-strongly concave for $\forall x\in\mathbb{R}^{d_1}$ (we will later refer to this as $(\mu_x, \mu_y)$-SC-SC)) and $f$ is $L$-Lipschitz smooth. Then we have

\begin{enumerate}[label=\alph*)]
\item $y^*(x) = \argmax_{y\in\mathbb{R}^{d_2}} f(x,y)$ is $\frac{L}{\mu_y}$-Lipschitz;
\item $\Phi(x) = \max_{y\in\mathbb{R}^{d_2}} f(x,y)$ is $\frac{2L^2}{\mu_y}$-Lipschitiz smooth and $\mu_x$-strongly convex with $\nabla\Phi(x) = \nabla_xf(x, y^*(x))$;
\item $x^*(y) = \argmin_{x\in\mathbb{R}^{d_1}} f(x,y)$ is $\frac{L}{\mu_x}$-Lipschitz;
\item $\Psi(y) = \min_{x\in\mathbb{R}^{d_1}} f(x,y)$ is $\frac{2L^2}{\mu_x}$-Lipschitiz smooth and $\mu_y$-strongly concave with $\nabla\Psi(y) = \nabla_yf(x^*(y), y)$.
\end{enumerate}

\end{lemma}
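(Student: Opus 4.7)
The plan is to prove (a) and (b) in detail; parts (c) and (d) then follow by the symmetry $f \leftrightarrow -f$ and $(x,y)\leftrightarrow(y,x)$.

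For part (a), I will use the first-order optimality conditions at $y^*(x_1)$ and $y^*(x_2)$. Since $f(x,\cdot)$ is $\mu_y$-strongly concave, both maximizers are unique and satisfy $\nabla_y f(x_1,y^*(x_1)) = 0 = \nabla_y f(x_2,y^*(x_2))$. By the $\mu_y$-strong monotonicity of $-\nabla_y f(x_1,\cdot)$,
\begin{equation*}
\mu_y \|y^*(x_1)-y^*(x_2)\|^2 \leq \langle \nabla_y f(x_1,y^*(x_1)) - \nabla_y f(x_1,y^*(x_2)),\, y^*(x_1)-y^*(x_2)\rangle.
\end{equation*}
Substituting $\nabla_y f(x_1,y^*(x_1))=0$ and $\nabla_y f(x_2,y^*(x_2))=0$, the right-hand side becomes $\langle \nabla_y f(x_2,y^*(x_2)) - \nabla_y f(x_1,y^*(x_2)),\, y^*(x_1)-y^*(x_2)\rangle$, which by Cauchy--Schwarz and the joint $L$-smoothness of $f$ is bounded by $L\|x_1-x_2\|\,\|y^*(x_1)-y^*(x_2)\|$. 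Dividing through gives the $(L/\mu_y)$-Lipschitz bound.

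For part (b), I first invoke Danskin's theorem, whose hypotheses are satisfied because $f(x,\cdot)$ is strongly concave (hence $y^*(x)$ is unique and continuous by (a)) and $f$ is continuously differentiable: this yields $\nabla \Phi(x) = \nabla_x f(x, y^*(x))$. Smoothness of $\Phi$ then follows by a two-term split:
\begin{equation*}
\|\nabla\Phi(x_1)-\nabla\Phi(x_2)\| \leq L\|x_1-x_2\| + L\|y^*(x_1)-y^*(x_2)\| \leq L\left(1+\tfrac{L}{\mu_y}\right)\|x_1-x_2\| \leq \tfrac{2L^2}{\mu_y}\|x_1-x_2\|,
\end{equation*}
using part (a) and $L\geq \mu_y$ (which is standard for $L$-smooth $\mu_y$-strongly concave functions). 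For $\mu_x$-strong convexity of $\Phi$, I note that for any $x_1,x_2$,
\begin{equation*}
\Phi(x_2) \geq f(x_2,y^*(x_1)) \geq f(x_1,y^*(x_1)) + \langle \nabla_x f(x_1,y^*(x_1)), x_2-x_1\rangle + \tfrac{\mu_x}{2}\|x_1-x_2\|^2,
\end{equation*}
where the first inequality is by optimality in the definition of $\Phi(x_2)$ and the second is the $\mu_x$-strong convexity of $f(\cdot,y^*(x_1))$. Recognizing $f(x_1,y^*(x_1))=\Phi(x_1)$ and $\nabla_x f(x_1,y^*(x_1)) = \nabla \Phi(x_1)$ completes this direction.

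Parts (c) and (d) are immediate by the symmetric argument applied to $-f(x,y)$, swapping the roles of minimization/maximization and of the strong convexity parameters. The main (mild) obstacle is justifying Danskin's theorem, i.e.~the differentiability of $\Phi$ and the formula for $\nabla\Phi$; this is where the strong concavity of $f(x,\cdot)$ is essential, since uniqueness and continuity of $y^*(x)$ are required to pass from the usual envelope formula (which only gives a subdifferential) to a gradient.
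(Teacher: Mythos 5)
The paper does not prove this lemma at all: it is imported verbatim as Lemma B.2 of \citep{lin2020near}, so there is no in-paper argument to compare against. Your proof is the standard one (optimality conditions plus strong monotonicity for the Lipschitzness of $y^*(\cdot)$, Danskin for $\nabla\Phi$, the two-term smoothness split with $L\ge\mu_y$, and the envelope inequality for strong convexity), and it is essentially correct, including the correct handling of the $2L^2/\mu_y$ constant via $L(1+L/\mu_y)\le 2L^2/\mu_y$ and the use of the paper's additive form of joint $L$-smoothness.

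One sign slip worth fixing in part (a): for a $\mu_y$-strongly concave $g=f(x_1,\cdot)$ the strong monotonicity of $-\nabla g$ reads $\mu_y\|u-v\|^2\le\langle \nabla g(v)-\nabla g(u),\,u-v\rangle$, whereas your display has $\langle\nabla_y f(x_1,y^*(x_1))-\nabla_y f(x_1,y^*(x_2)),\,y^*(x_1)-y^*(x_2)\rangle$ on the right, which is the negative of the correct quantity (and is $\le 0$). After substituting the two optimality conditions and applying Cauchy--Schwarz the final bound is unaffected, so this is a typo rather than a gap, but the displayed inequality as written is false and should have its two gradient terms swapped.
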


\begin{lemma} \label{criterion relation}
Under the same assumptions as Lemma \ref{lin's lemma}, we have
\begin{enumerate}[label=\alph*)]
\item $\gap_f(x,y) \leq \frac{L^2}{\mu_y}\|x-x^*\|^2 + \frac{L^2}{\mu_x}\|y-y^*\|^2 $, where $(x^*, y^*)$ is the optimal solution to $\min_{x\in \mathbb{R}^{d_1}}\max_{y\in \mathbb{R}^{d_2}}\ f(x,y)$.
\item $\gap_f(x,y) \leq \frac{1}{2\mu_x}\|\nabla_x f(x,y)\|^2 + \frac{1}{2\mu_y}\|\nabla_y f(x,y)\|^2$.
\item $\frac{\mu_x}{2}\|x-x^*\|^2 + \frac{\mu_y}{2}\|y-y^*\|^2 \leq \gap_f(x,y). $
\item $\|\nabla_x f(x, y)\|^2 + \|\nabla_y f(x,y)\|^2 \leq 4L^2(\|x-x^*\|^2+\|y-y^*\|^2)$. 

\end{enumerate}
\end{lemma}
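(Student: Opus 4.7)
\textbf{Proof proposal for Lemma \ref{criterion relation}.} The key is to decompose the gap through the primal and dual functions and then invoke the regularity properties established in Lemma \ref{lin's lemma}. Throughout, let $(x^*,y^*)$ be the unique saddle point, so that $\Phi(x^*)=\Psi(y^*)$, $\nabla\Phi(x^*)=0$, and $\nabla\Psi(y^*)=0$. In all four parts I will use the identity
\[
\gap_f(x,y) \;=\; \bigl[\Phi(x)-\Phi(x^*)\bigr]+\bigl[\Psi(y^*)-\Psi(y)\bigr].
\]

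For part (a), I would use that by Lemma \ref{lin's lemma}, $\Phi$ is $\frac{2L^2}{\mu_y}$-smooth and $\Psi$ is $\frac{2L^2}{\mu_x}$-smooth. Since $\nabla\Phi(x^*)=0$ and $\nabla\Psi(y^*)=0$, the standard smoothness upper bound gives $\Phi(x)-\Phi(x^*)\le \frac{L^2}{\mu_y}\|x-x^*\|^2$ and $\Psi(y^*)-\Psi(y)\le \frac{L^2}{\mu_x}\|y-y^*\|^2$, which summed yield (a). For part (c), the same decomposition but using the strong convexity lower bounds from Lemma \ref{lin's lemma} ($\Phi$ is $\mu_x$-strongly convex, $\Psi$ is $\mu_y$-strongly concave) gives $\Phi(x)-\Phi(x^*)\ge \frac{\mu_x}{2}\|x-x^*\|^2$ and $\Psi(y^*)-\Psi(y)\ge \frac{\mu_y}{2}\|y-y^*\|^2$.

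For part (b), I would bypass the primal/dual functions and instead bound the gap via a two-sided partial-minimization argument. Define $y^\circ(x)\triangleq\argmax_y f(x,y)$ and $x^\circ(y)\triangleq\argmin_x f(x,y)$. Since $f(\cdot,y)$ is $\mu_x$-strongly convex and $f(x,\cdot)$ is $\mu_y$-strongly concave, applying the Polyak--\L ojasiewicz inequality (a direct consequence of strong convexity) coordinate-wise gives
\[
f(x,y)-\min_{x'} f(x',y)\le \tfrac{1}{2\mu_x}\|\nabla_x f(x,y)\|^2,\qquad \max_{y'} f(x,y')-f(x,y)\le \tfrac{1}{2\mu_y}\|\nabla_y f(x,y)\|^2.
\]
Summing these two inequalities, the $f(x,y)$ terms cancel and the left-hand side becomes exactly $\gap_f(x,y)$.

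For part (d), I would simply add and subtract $\nabla f(x^*,y^*)=0$ and apply $L$-Lipschitz smoothness of $f$: $\|\nabla_x f(x,y)\|=\|\nabla_x f(x,y)-\nabla_x f(x^*,y^*)\|\le L(\|x-x^*\|+\|y-y^*\|)$, then square and use $(a+b)^2\le 2(a^2+b^2)$ to get $\|\nabla_x f(x,y)\|^2\le 2L^2(\|x-x^*\|^2+\|y-y^*\|^2)$. The same bound holds for $\|\nabla_y f(x,y)\|^2$, and summing gives the $4L^2$ constant. None of the four parts presents a genuine obstacle; the only subtle point worth double-checking is in (b), namely that PL holds for each coordinate slice with the correct strong (convexity/concavity) constants, which follows because strong convexity in $x$ (resp.\ concavity in $y$) is by assumption uniform in the other variable.
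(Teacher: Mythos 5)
Your proposal is correct and follows essentially the same route as the paper's proof: parts (a) and (c) via smoothness/strong convexity of $\Phi$ and $\Psi$ at the saddle point, part (b) via the coordinate-wise strong-convexity (PL-type) bounds on $f(x,\cdot)$ and $f(\cdot,y)$ summed through the decomposition of the gap, and part (d) via Lipschitz smoothness at $(x^*,y^*)$ with $(a+b)^2\le 2(a^2+b^2)$. No gaps.
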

 
\begin{proof}
\begin{enumerate}[label=\alph*)]
\item Because $\Phi(x)$ is $\frac{2L^2}{\mu_y}$-smooth by Lemma \ref{lin's lemma} and $\nabla\Phi(x^*) = 0$, we have
$
    \Phi(x) - \Phi(x^*) \leq \frac{L^2}{\mu_y}\|x-x^*\|^2.
$
Similarly, because $\Psi(y)$ is $\frac{2L^2}{\mu_x}$-smooth and $\Psi(y^*) = 0$, we have $\Psi(y^*)-\Psi(y) \leq \frac{L^2}{\mu_x}\|y-y^*\|^2$. We reach the conclusion by noting that $\gap_f(x,y) = \Phi(x) - \Psi(y) $ and $\Phi(x^*) =\Psi(y^*)$.
\item Because $f(\cdot, y)$ is $\mu_x$-strongly-convex and $\nabla_x f(x^*(y), y) = 0$, we have $f(x,y) - \min_x f(x,y) \leq \langle \nabla_x f(x^*(y), y), x-x^*(y)\rangle + \frac{1}{2\mu_x}\|\nabla_x f(x, y)-\nabla_x f(x^*(y), y)\|^2\leq\frac{1}{2\mu_x}\|\nabla_x f(x, y)\|^2$. Similarly, we have $\max_y f(x,y) - f(x,y) \leq \frac{1}{2\mu_y}\|\nabla_y f(x,y)\|^2$. Then we note that $\gap_f(x,y)=\max_y f(x,y) - f(x,y)  +  f(x,y) - \min_x f(x,y)$.
\item Because $\Phi(x)$ is $\mu_x$ strongly-convex and $\nabla\Phi(x^*) = 0$, we have $\Phi(x)\geq \Phi(x^*) +\frac{\mu_x}{2}\|x-x^*\|^2$. Similarly, because $\Psi(y)$ is $\mu_y$ strongly-concave and $\nabla\Psi(y^*) = 0$, we have $\Psi(y^*)-\Psi(y) \geq \frac{\mu_y}{2}\|y-y^*\|^2$.
\item By definition of Lipschitz smoothness, $\|\nabla_x f(x, y)\|^2 = \|\nabla_x f(x, y)-\nabla_x f(x^*, y^*)\|^2 \leq L^2(\|x-x^*\|+\|y-y^*\|)^2\leq 2L^2(\|x-x^*\|^2+\|y-y^*\|^2)$ and $\|\nabla_y f(x, y)\|^2 = \|\nabla_y f(x, y)-\nabla_y f(x^*, y^*)\|^2 \leq L^2(\|x-x^*\|+\|y-y^*\|)^2\leq 2L^2(\|x-x^*\|^2+\|y-y^*\|^2)$.
\end{enumerate}
\end{proof}

\noindent\textbf{Proof of Proposition \ref{prop regularized smooth}}

\begin{proof}
(a) and (b) directly follow from the definition of averaged smoothness and individual smoothness. \\

\noindent(c) Denote 
$$\Bar{f}(x, y)=f(x,y) + \frac{\tau_x}{2}\|x-\Tilde{x}\|^2 - \frac{\tau_y}{2}\|y-\Tilde{y}\|^2 = \frac{1}{n}\sum_{i=1}^n \left[f_i(x,y) + \frac{\tau_x}{2}\|x-\Tilde{x}\|^2 - \frac{\tau_y}{2}\|y - \Tilde{y}\|^2 \right] \triangleq \frac{1}{n}\sum_{i=1}^n \Bar{f}_i(x, y),$$
where $\Bar{f}_i(x, y) = f_i(x,y) + \frac{\tau_x}{2}\|x-\Tilde{x}\|^2 - \frac{\tau_y}{2}\|y - \Tilde{y}\|^2$. Note that for any $(x_1, y_1)$ and $(x_2, y_2)$,
\begin{align*}
    \|\nabla_x \Bar{f}_i(x_1, y_1) - \nabla_x \Bar{f}_i(x_2, y_2) \|^2\leq 2\|\nabla_x f_i(x_1, y_1) - \nabla_x f_i(x_2, y_2)\|^2 + 2\tau_x^2\|x_1-x_2\|^2,\\
    \|\nabla_y \Bar{f}_i(x_1, y_1) - \nabla_y \Bar{f}_i(x_2, y_2) \|^2\leq 2\|\nabla_y f_i(x_1, y_1) - \nabla_y f_i(x_2, y_2)\|^2 + 2\tau_y^2\|y_1-y_2\|^2. 
\end{align*}
Therefore, 
\begin{align*}
    \frac{1}{n}\sum_{i=1}^n\autonorm{\nabla \Bar{f}_i(x_1,y_1)-\nabla \Bar{f}_i(x_2,y_2)}^2\leq& \frac{2}{n}\sum_{i=1}^n\autonorm{\nabla f_i(x_1,y_1)-\nabla f_i(x_2,y_2)}^2 + 2[\tau_x^2\|x_1-x_2\|^2 +\tau_y^2\|y_1-y_2\|^2]\\
    \leq  &\left(2L^2+2\max\{\tau_x^2,\tau_y^2\}\right) \left(\|x_1-x_2\|^2+\|y_1-y_2\|^2  \right).
\end{align*}
\end{proof}

An important trick to transform the basic hard instance into the final hard instance is scaling, which will preserve the smoothness of the original function while extend the domain of the function to a high dimension, i.e., enlarging $d$, which helps to increase the lower bound. The properties of scaling is summarized in the following lemma.

\begin{lemma}[Scaling and Smoothness]
\label{lm:scaling}
    For a function $\Bar{g}(x,y)$ defined on $\mathbb{R}^{d_1}\times\mathbb{R}^{d_2}$, if $\Bar{g}$ is $L$-smooth, then for the following scaled function:
    \begin{equation}
        g(x,y)=\eta^2\Bar{g}\autopar{\frac{x}{\eta},\frac{y}{\eta}},
    \end{equation}
    then $g$ is also $L$-smooth. Furthermore if the function $\Bar{g}$ has a finite-sum form: $\Bar{g}(x,y)=\frac{1}{n}\sum_{i=1}^n\Bar{g}_i(x,y)$, if $\{\Bar{g}_i\}_{i=1}^n$ is $L$-averaged smooth, then for the following functions:
    \begin{equation}
        g_i(x,y)=\eta^2\Bar{g}_i\autopar{\frac{x}{\eta},\frac{y}{\eta}},
        \quad
        \text{and}
        \quad
        g(x,y)=
        \frac{1}{n}\sum_{i=1}^n g_i(x,y)
        =
        \frac{1}{n}\sum_{i=1}^n\eta^2\Bar{g}_i\autopar{\frac{x}{\eta},\frac{y}{\eta}},
    \end{equation}
    $\{g_i\}_{i=1}^n$ is also $L$-averaged smooth. If we further assume $\{\Bar{g}_i\}_{i=1}^n$ is $L$-individually smooth, then $\{g_i\}_{i=1}^n$ is also $L$-individually smooth.
\end{lemma}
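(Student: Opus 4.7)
The plan is to reduce each claim to a direct chain-rule computation: the scaling $g(x,y)=\eta^2\bar g(x/\eta,y/\eta)$ produces a cancellation that exactly preserves the Lipschitz constant. First I would compute the gradient. By the chain rule, $\nabla_x g(x,y)=\eta^2\cdot\tfrac{1}{\eta}\nabla_x\bar g(x/\eta,y/\eta)=\eta\,\nabla_x\bar g(x/\eta,y/\eta)$ and analogously $\nabla_y g(x,y)=\eta\,\nabla_y\bar g(x/\eta,y/\eta)$. So the $\eta^2$ in front of $\bar g$ and the two factors of $1/\eta$ from differentiating through the argument combine to a single factor of $\eta$ in $\nabla g$.

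For part (i), I would then write, for any $(x_1,y_1),(x_2,y_2)\in\mathbb{R}^{d_1}\times\mathbb{R}^{d_2}$,
\begin{equation*}
\|\nabla_x g(x_1,y_1)-\nabla_x g(x_2,y_2)\|=\eta\,\bigl\|\nabla_x\bar g(x_1/\eta,y_1/\eta)-\nabla_x\bar g(x_2/\eta,y_2/\eta)\bigr\|,
\end{equation*}
and apply $L$-smoothness of $\bar g$ to the right-hand side, which bounds it by $\eta L(\|x_1/\eta-x_2/\eta\|+\|y_1/\eta-y_2/\eta\|)=L(\|x_1-x_2\|+\|y_1-y_2\|)$. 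The identical argument with $\nabla_y$ in place of $\nabla_x$ finishes $L$-smoothness of $g$.

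For part (ii) (averaged smoothness), the same cancellation yields $\nabla g_i(x,y)=\eta\,\nabla\bar g_i(x/\eta,y/\eta)$ for each $i$, so
\begin{equation*}
\frac{1}{n}\sum_{i=1}^n\|\nabla g_i(x_1,y_1)-\nabla g_i(x_2,y_2)\|^2=\eta^2\cdot\frac{1}{n}\sum_{i=1}^n\bigl\|\nabla\bar g_i(x_1/\eta,y_1/\eta)-\nabla\bar g_i(x_2/\eta,y_2/\eta)\bigr\|^2.
\end{equation*}
Applying $L$-averaged smoothness of $\{\bar g_i\}$ bounds this by $\eta^2 L^2(\|x_1/\eta-x_2/\eta\|^2+\|y_1/\eta-y_2/\eta\|^2)=L^2(\|x_1-x_2\|^2+\|y_1-y_2\|^2)$, as desired. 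For part (iii) ($L$-individual smoothness), simply apply the argument of part (i) to each component $g_i$ separately, using $L$-smoothness of each $\bar g_i$.

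I do not anticipate a genuine obstacle here; the entire content of the lemma is the algebraic observation that the $\eta^2$ prefactor and the $1/\eta$ argument rescaling combine so that one power of $\eta$ survives in the gradient and cancels against the $1/\eta$ inside the norms $\|x_1/\eta-x_2/\eta\|$, leaving the constant $L$ unchanged. The only care needed is to keep track of the chain-rule factors and to apply the correct smoothness inequality (sum-inside-squared versus squared-inside-sum) in the averaged case.
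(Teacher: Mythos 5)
Your proposal is correct and follows essentially the same route as the paper's proof: compute $\nabla g(x,y)=\eta\,\nabla\bar g(x/\eta,y/\eta)$ via the chain rule, then let the surviving factor of $\eta$ cancel against the $1/\eta$ rescaling inside the smoothness (respectively, averaged-smoothness) inequality, and handle the individually smooth case by applying the first part componentwise. No gaps.
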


\begin{proof}
    For the first statement, note that $\nabla g\autopar{x,y}=\eta\nabla\Bar{g}\autopar{\frac{x}{\eta},\frac{y}{\eta}}$, so for any $(x_1,y_1), (x_2,y_2)\in\mathbb{R}^{d_1}\times\mathbb{R}^{d_2}$,
    \begin{equation}
        \begin{split}
            \autonorm{\nabla_x g(x_1,y_1)-\nabla_x g(x_2,y_2)}
            =\ &
            \autonorm{\eta\nabla_x g\autopar{\frac{x_1}{\eta},\frac{y_1}{\eta}}-\eta\nabla_x g\autopar{\frac{x_2}{\eta},\frac{y_2}{\eta}}}\\
            \leq\ &
            \eta L\autopar{\autonorm{\frac{x_1}{\eta}-\frac{x_2}{\eta}}+\autonorm{\frac{y_1}{\eta}-\frac{y_2}{\eta}}}
            =
            L\autopar{\autonorm{x_1-x_2}+\autonorm{y_1-y_2}},
        \end{split}
    \end{equation}
    similar conclusion also holds for $\nabla_y g$, which verifies the first conclusion. 
    
    For the averaged smooth finite-sum statement, note that $\nabla g_i\autopar{x,y}=\eta\nabla\Bar{g}_i\autopar{\frac{x}{\eta},\frac{y}{\eta}}$, so for any $(x_1,y_1), (x_2,y_2)\in\mathbb{R}^{d_1}\times\mathbb{R}^{d_2}$,
    \begin{equation}
        \begin{split}
            &\mathbb{E}\automedpar{
            \autonorm{\nabla g_i(x_1,y_1)-\nabla g_i(x_2,y_2)}^2
            }\\
            =\ &
            \mathbb{E}\automedpar{\autonorm{\eta\nabla\Bar{g}_i\autopar{\frac{x_1}{\eta}, \frac{y_1}{\eta}}-\eta\nabla\Bar{g}_i\autopar{\frac{x_2}{\eta}, \frac{y_2}{\eta}}}^2}\\
            =\ &
            \eta^2\mathbb{E}\automedpar{\autonorm{\nabla\Bar{g}_i\autopar{\frac{x_1}{\eta}, \frac{y_1}{\eta}}-\nabla\Bar{g}_i\autopar{\frac{x_2}{\eta}, \frac{y_2}{\eta}}}^2}\\
            \leq\ &
            \eta^2L^2\autopar{\autonorm{\frac{x_1}{\eta}-\frac{x_2}{\eta}}^2+\autonorm{\frac{y_1}{\eta}-\frac{y_2}{\eta}}^2}
            =
            L^2\autopar{\autonorm{x_1-x_2}^2+\autonorm{y_1-y_2}^2},
        \end{split}
    \end{equation}
    so $\{g_i\}_{i=1}^n$ is $L$-averaged smooth. 
    
    For the individually smooth case statement, note that each $g_i$ is a scaled version of $\Bar{g}_i$, which is $L$-smooth, by the conclusion for the first statement, it implies that $g_i$ is also $L$-smooth, which concludes the proof.
\end{proof}

\section{Proof of NC-SC Lower Bound}
\label{sec:Apdx_THM_LB_NCSC}
Similar to Section \ref{sec:main_LB_NCSC} in the main text, here in this section only, we denote $x_d$ as the $d$-th coordinate of $x$ and $x^t$ as the variable $x$ in the $t$-th iteration.

\subsection{Deterministic NC-SC Lower Bound}

We start from the proof several important lemmas, then proceed to the analysis of Theorem \ref{THM:LB_NCSC_DETER}.

\subsubsection{Proof of Lemma \ref{LM:NCSC_LB_F_D}}
\label{sec:Apdx_LM_NCSC_LB_F_D}

\begin{proof}
	Recall the definition of $F_d$ in \eqref{eq:LB_hard_instance_deter}, define $\Gamma_d(x)\triangleq\sum_{i=1}^{d}\Gamma(x_i)$, note that $x_i^2=x^\top e_i e_i^\top x$, and
	\begin{equation}
		\begin{split}
			\nabla_xF_d(x,y;\lambda,\alpha)
			=\ &
			\lambda_1 B_d\top y-
			\frac{\lambda_1^2\sqrt{\alpha}}{2\lambda_2}e_1+
			\frac{\lambda_1^2\alpha}{2\lambda_2}\nabla\Gamma_d(x)-
			\frac{\lambda_1^2\alpha}{2\lambda_2}e_{d+1}e_{d+1}^\top x
			\\
			\nabla_yF_d(x,y;\lambda,\alpha)
			=\ &
			\lambda_1 B_dx-2\lambda_2y,
		\end{split}
	\end{equation}
	where $ \nabla\Gamma_d(x)=(\nabla\Gamma(x_1), \nabla\Gamma(x_2), \cdots, \nabla\Gamma(x_d))^\top $. Then for the matrix norm of $B_d$, note that $\alpha\in\automedpar{0,1}$ and
	\begin{equation}
	    \begin{split}
	        &\autonorm{B_dx}=\sqrt{x_{d+1}^2+(x_d-x_{d+1})^2+\cdots+(x_1-x_2)^2+(\sqrt[4]{\alpha}x_1)^2}\\
    	    \leq\ &
    	    \sqrt{x_{d+1}^2+2\autopar{x_d^2+x_{d+1}^2+x_{d-1}^2+x_{d}^2+\cdots+x_2^2+x_3^2+x_1^2+x_2^2}+x_1^2}\\
    	    \leq\ &
    	    \sqrt{4\autopar{x_{d+1}^2+x_{d}^2+x_{d-1}^2+\cdots+x_2^2+x_1^2}}
    	    =
    	    2\autonorm{x},
	    \end{split}
	\end{equation}
	similarly we have $\autonorm{B_d^Ty}\leq 2\autonorm{y}$. Denote $C_\gamma\triangleq 360,$\footnote{The choice of $C_\gamma$ follows the setting in \citep[Proposition 3.11]{zhou2019lower}, which is an upper bound of the Lipschitz smoothness parameter of $ \Gamma_d(x) $ in \cite[Lemma 2]{carmon2019lowerII}.} so because $0\leq\alpha\leq 1$ and $\|B_d\|\leq 2$, we have ($\|\cdot\|$ here denotes the spectral norm of a matrix)
	\begin{equation}
		\|\nabla_{xx}^2F_d\|\leq \frac{\lambda_1^2}{2\lambda_2}(C_\gamma\alpha+\alpha)\leq \frac{400\lambda_1^2\alpha}{2\lambda_2}=\frac{200\lambda_1^2\alpha}{\lambda_2},
		\quad
		\|\nabla_{xy}^2F_d\|\leq 2\lambda_1,
		\quad
		\|\nabla_{yx}^2F_d\|\leq 2\lambda_1,
		\quad
		\|\nabla_{yy}^2F_d\|= 2\lambda_2,
	\end{equation}
	which proves the first two statements (i) and (ii). 
	
	For (iii),  due to the structure of $ B_d $ and concerning the activation status defined in $\mathcal{X}_k$ and $\mathcal{Y}_k$, it is easy to verify that if $ x\in\mathcal{X}_{k_1}, y\in\mathcal{Y}_{k_2} $ for $k_1, k_2\in\mathbb{N}$ and $k_1, k_2\leq d$, we have 
	$$B_dx\in\mathcal{Y}_{k_1},\quad B_d^\top y\in\mathcal{X}_{k_2+1}.$$ 
	Since the remaining components in the gradient do not affect the activation with the initial point $ (0,0)\in\mathbb{R}^{d+1}\times\mathbb{R}^{d+2} $, this proves (iii).
	
	For (iv), by substituting the parameter settings, we have $\frac{200\lambda_1^2\alpha}{\lambda_2}= L$, $ 2\lambda_1=L $ and $ 2\lambda_2=\mu $, so the function $F_d$ is $\mu$-strongly concave in $y$ and $L$-Lipschitz smooth, which concludes the proof.
\end{proof}

\subsubsection{Proof of Lemma \ref{LM:NCSC_LB_PHI}}
\label{sec:Apdx_LM_NCSC_LB_PHI}
\begin{proof}
	Recall the primal function $\Phi_d$ of $F_d$ \eqref{eq:LB_hard_instance_deter}:
	\begin{equation}
		\Phi_d(x;\lambda,\alpha)
		=
		\underbrace{\frac{\lambda_1^2}{2\lambda_2}
		\left(
		\frac{1}{2}x^\top A_dx-
		\sqrt{\alpha}x_1+
		\frac{\sqrt{\alpha}}{2}+
		\alpha\sum_{i=1}^{d}\Gamma(x_i)\right)}_{\triangleq\Phi_{d1}(x)}+
		\underbrace{\frac{(1-\alpha)\lambda_1^2}{4\lambda_2}x_{d+1}^2}_{\triangleq\Phi_{d2}(x)}.
	\end{equation}
	
	For the first statement, because $ x_d=x_{d+1}=0 $, we have
	\begin{equation}
		\nabla\Phi_d(x;\lambda,\alpha)
		=
		\nabla\Phi_{d1}(x;\lambda,\alpha)+\nabla\Phi_{d2}(x;\lambda,\alpha)
		=
		\nabla\Phi_{d1}(x;\lambda,\alpha),
	\end{equation}
	which corresponds to the hard instance in \cite[Equation 9]{carmon2019lowerII} with an extra coefficient $\frac{\lambda_1^2}{2\lambda_2}$, then we apply \cite[Lemma 3]{carmon2019lowerII} therein to attain the desired large gradient norm result, i.e.
	\begin{equation}
	    \autonorm{\nabla\Phi_d(x;\lambda,\alpha)}
	    \geq
	    \frac{\lambda_1^2}{2\lambda_2}\times \frac{\alpha^{\frac{3}{4}}}{4}
	    =
	    \frac{\lambda_1^2}{8\lambda_2}\alpha^{\frac{3}{4}}.
	\end{equation}
	
	For the second statement, we have
	\begin{equation}
		\begin{split}
			&\Phi_d(0;\lambda,\alpha)-\inf_{x\in\mathbb{R}^{d+1}}\Phi_d(x;\lambda,\alpha)\\
			=\ &
			\Phi_{d1}(0;\lambda,\alpha)-\inf_{x\in\mathbb{R}^{d+1}}\automedpar{\Phi_{d1}(x;\lambda,\alpha)+\Phi_{d2}(x;\lambda,\alpha)}\\
			\leq\ &
			\Phi_{d1}(0;\lambda,\alpha)-\inf_{x\in\mathbb{R}^{d+1}}\Phi_{d1}(x;\lambda,\alpha)\\
			\leq\ &
			\frac{\lambda_1^2}{2\lambda_2}\left(\frac{\sqrt{\alpha}}{2}+10\alpha d\right),
		\end{split}
	\end{equation}
	where the first inequality uses that $\Phi_{d2}(x;\lambda,\alpha)\geq 0$ because $ \alpha\in[0,1] $, and the last inequality applies \cite[Lemma 4]{carmon2019lowerII}, which proves the second statement.
\end{proof}

\subsubsection{Proof of Theorem \ref{THM:LB_NCSC_DETER}}
\label{sec:Apdx_THM_LB_NCSC_DETER}

The complexity for deterministic nonconvex-strongly-concave problems is defined as

\begin{equation}
    \begin{split}
        \mathrm{Compl}_\epsilon
        \autopar{
        \mathcal{F}_{\mathrm{NCSC}}^{L,\mu,\Delta},\mathcal{A},\mathbb{O}_{\mathrm{FO}}
        }
        \triangleq\ &
        \underset{f\in\mathcal{F}_{\mathrm{NCSC}}^{L,\mu,\Delta}}{\sup}\ 
    	\underset{\mathtt{A}\in{\mathcal{A}\autopar{\mathbb{O}_{\mathrm{FO}}}}}{\inf}\ 
    	T_{\epsilon}(f,\mathtt{A})
    	\\
    	=\ &
    	\underset{f\in\mathcal{F}_{\mathrm{NCSC}}^{L,\mu,\Delta}}{\sup}\ 
    	\underset{\mathtt{A}\in{\mathcal{A}\autopar{\mathbb{O}_{\mathrm{FO}}}}}{\inf}\ 
    	\inf
    	\autobigpar{T\in\mathbb{N}\ \Big|\ \autonorm{\nabla \Phi\autopar{x^T}}\leq\epsilon}.
    \end{split}
\end{equation}

As a helper lemma, we first discuss the primal function of the scaled hard instance.

\begin{lemma}[Primal of the Scaled Hard Instance]
    \label{lm:Primal_Scaled_Deter_Hard_Instance}
    With the function $F_d$ defined in \eqref{eq:LB_hard_instance_deter}, $\Phi_d$ defined in \eqref{eq:LB_hard_instance_deter_Phi} and any $\eta\in\mathbb{R}$, for the following function:
    \begin{equation}
        f(x,y)=\eta^2F_d\left(\frac{x}{\eta},\frac{y}{\eta};\lambda,\alpha\right),
    \end{equation}
    then for its primal function $\Phi(x)\triangleq\max_{y\in\mathbb{R}^{d+2}}f(x,y)$, we have
    \begin{equation}
        \Phi(x)=\eta^2\Phi_d\autopar{\frac{x}{\eta};\lambda,\alpha}.
    \end{equation}
\end{lemma}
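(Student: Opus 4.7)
The claim is a straightforward change-of-variables identity for the inner maximization defining the primal function. The plan is to unfold the definition of $\Phi$, apply the bijection $z = y/\eta$ on $\mathbb{R}^{d+2}$ (valid whenever $\eta \neq 0$), and then pull the nonnegative scalar $\eta^2$ outside the supremum. Concretely, I would write
\begin{equation*}
\Phi(x) \;=\; \max_{y \in \mathbb{R}^{d+2}} \eta^2 F_d\autopar{\tfrac{x}{\eta}, \tfrac{y}{\eta}; \lambda, \alpha} \;=\; \eta^2 \max_{z \in \mathbb{R}^{d+2}} F_d\autopar{\tfrac{x}{\eta}, z; \lambda, \alpha} \;=\; \eta^2 \Phi_d\autopar{\tfrac{x}{\eta}; \lambda, \alpha},
\end{equation*}
where the second equality uses that $z \mapsto \eta z$ is a bijection on $\mathbb{R}^{d+2}$ so the value of the supremum is unaffected, and the final equality invokes the definition \eqref{eq:LB_hard_instance_deter_Phi} of $\Phi_d$.

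The only mild subtlety is the edge case $\eta = 0$, which should be excluded since $f$ degenerates there; in the intended use (the proof of Theorem \ref{THM:LB_NCSC_DETER}), the scaling factor is always chosen strictly positive in order to match the prescribed values of $\Delta$ and $\epsilon$, so this case does not arise. I do not anticipate a real obstacle: the lemma is a bookkeeping identity whose purpose is to transfer the structural properties of $\Phi_d$ established in Lemma \ref{LM:NCSC_LB_PHI} (the lower bound on $\autonorm{\nabla \Phi_d}$ before the last two coordinates are activated, and the bounded initial gap) to the rescaled instance $f$ that will serve as the actual hard instance in the lower bound argument.
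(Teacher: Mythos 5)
Your proof is correct, but it takes a genuinely different route from the paper's. You argue by a generic change of variables: since $y\mapsto y/\eta$ is a bijection of $\mathbb{R}^{d+2}$ for $\eta\neq 0$, the supremum of $F_d\autopar{\tfrac{x}{\eta},\tfrac{y}{\eta};\lambda,\alpha}$ over $y$ equals the supremum of $F_d\autopar{\tfrac{x}{\eta},z;\lambda,\alpha}$ over $z$, and the positive factor $\eta^2$ passes through the max. The paper instead solves the inner maximization explicitly: it expands the scaled function, observes that its $y$-dependent part is exactly $\lambda_1\autoprod{B_dx,y}-\lambda_2\|y\|^2$ (the scaling factors cancel for the bilinear and quadratic terms), sets $\nabla_y f(x,y)=0$ to get the maximizer $y^*(x)=\frac{\lambda_1}{2\lambda_2}B_dx$, and substitutes back to recognize $\eta^2\Phi_d\autopar{\tfrac{x}{\eta};\lambda,\alpha}$. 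Your argument is shorter and more robust, since it uses nothing about the structure of $F_d$ beyond strong concavity in $y$ guaranteeing the max is attained (and in fact works for the bare supremum even without that); the paper's computation is longer but produces the closed form of $y^*(x)$ as a byproduct, which is not needed elsewhere. Your remark that $\eta=0$ must be excluded is apt — the expression $F_d(x/\eta,y/\eta)$ is not even defined there — and indeed the application in Theorem \ref{THM:LB_NCSC_DETER} always takes $\eta>0$.
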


\begin{proof}
    Check the scaled function,
    \begin{equation}
    \label{eq:LB_hard_instance_deter_scaled}
	\begin{split}
	    & f(x,y)\\
	    =\ &
    	\eta^2
    	\Bigg(
    	\lambda_1\autoprod{B_d\frac{x}{\eta},\frac{y}{\eta}}-
    	\lambda_2\autonorm{\frac{y}{\eta}}^2-\frac{\lambda_1^2\sqrt{\alpha}}{2\lambda_2}\autoprod{e_1,\frac{x}{\eta}}+
    	\frac{\lambda_1^2\alpha}{2\lambda_2}\sum_{i=1}^{d}\Gamma\autopar{\frac{x_i}{\eta}}
    	-\frac{\lambda_1^2\alpha}{4\lambda_2}\autopar{\frac{x_{d+1}}{\eta}}^2+
    	\frac{\lambda_1^2\sqrt{\alpha}}{4\lambda_2}
    	\Bigg)\\
    	=\ &
    	\lambda_1\autoprod{B_dx,y}-
    	\lambda_2\autonorm{y}^2
    	+
    	\eta^2
    	\Bigg(
    	-\frac{\lambda_1^2\sqrt{\alpha}}{2\lambda_2}\autoprod{e_1,\frac{x}{\eta}}+
    	\frac{\lambda_1^2\alpha}{2\lambda_2}\sum_{i=1}^{d}\Gamma\autopar{\frac{x_i}{\eta}}
    	-\frac{\lambda_1^2\alpha}{4\lambda_2}\autopar{\frac{x_{d+1}}{\eta}}^2+
    	\frac{\lambda_1^2\sqrt{\alpha}}{4\lambda_2}
    	\Bigg)
    	,
	\end{split}
\end{equation}
check the gradient over $y$ and set it to be $0$ to solve for $y^*(x)$, we have
\begin{equation}
    \nabla_y f(x,y^*(x))=\lambda_1B_dx-2\lambda_2y^*(x)=0
    \quad\Longrightarrow \quad
    y^*(x)=\frac{\lambda_1}{2\lambda_2}B_dx,
\end{equation}
so the primal function is
\begin{equation}
    \begin{split}
        & \Phi(x)=f\autopar{x,y^*(x)}\\
        =\ &
        \lambda_1\autoprod{B_dx,y^*(x)}-
    	\lambda_2\autonorm{y^*(x)}^2
    	+
    	\eta^2
    	\autopar{
    	-\frac{\lambda_1^2\sqrt{\alpha}}{2\lambda_2}\autoprod{e_1,\frac{x}{\eta}}
    	+
    	\frac{\lambda_1^2\alpha}{2\lambda_2}\sum_{i=1}^{d}\Gamma\autopar{\frac{x_i}{\eta}}
    	-
    	\frac{\lambda_1^2\alpha}{4\lambda_2}\autopar{\frac{x_{d+1}}{\eta}}^2
    	+
    	\frac{\lambda_1^2\sqrt{\alpha}}{4\lambda_2}
    	}\\
    	=\ &
    	\frac{\lambda_1^2}{4\lambda_2}\autonorm{B_dx}^2
    	+
    	\eta^2
    	\autopar{
    	-\frac{\lambda_1^2\sqrt{\alpha}}{2\lambda_2}\autoprod{e_1,\frac{x}{\eta}}+
    	\frac{\lambda_1^2\alpha}{2\lambda_2}\sum_{i=1}^{d}\Gamma\autopar{\frac{x_i}{\eta}}
    	-
    	\frac{\lambda_1^2\alpha}{4\lambda_2}\autopar{\frac{x_{d+1}}{\eta}}^2+
    	\frac{\lambda_1^2\sqrt{\alpha}}{4\lambda_2}
    	}\\
    	=\ &
    	\eta^2
    	\autopar{
    	\frac{\lambda_1^2}{4\lambda_2}\autonorm{B_d\frac{x}{\eta}}^2
    	-\frac{\lambda_1^2\sqrt{\alpha}}{2\lambda_2}\autoprod{e_1,\frac{x}{\eta}}+
    	\frac{\lambda_1^2\alpha}{2\lambda_2}\sum_{i=1}^{d}\Gamma\autopar{\frac{x_i}{\eta}}
    	-
    	\frac{\lambda_1^2\alpha}{4\lambda_2}\autopar{\frac{x_{d+1}}{\eta}}^2+
    	\frac{\lambda_1^2\sqrt{\alpha}}{4\lambda_2}
    	}\\
    	=\ &
    	\eta^2\Phi_d\autopar{\frac{x}{\eta};\lambda,\alpha},
        \end{split}
\end{equation}
which concludes the proof.
\end{proof}

Now we come to the formal statement and proof of the main theorem.

\begin{theorem}[Lower Bound for General NC-SC, Restate Theorem \ref{THM:LB_NCSC_DETER}] For any linear-span first-order algorithm $ \mathtt{A}\in\mathcal{A} $ and parameters $ L,\mu,\Delta>0 $, with a desired accuracy $ \epsilon>0 $, for the following function $ f:\mathbb{R}^{d+1}\times\mathbb{R}^{d+1}\rightarrow\mathbb{R} $:
\begin{equation}
	f(x,y)\triangleq\eta^2F_d\left(\frac{x}{\eta},\frac{y}{\eta};\lambda^*,\alpha\right),
\end{equation}
where $F_d$ is defined in \eqref{eq:LB_hard_instance_deter}, with a primal function $ \Phi(x)\triangleq\max_{y\in\mathbb{R}^{d+1}}f(x,y) $, for a small enough $ \epsilon>0 $ satisfying	
$$ \epsilon^2\leq\min\left(\frac{\Delta L}{64000}, \frac{\Delta L\sqrt{\kappa}}{38400}\right), $$ 
if we set
\begin{equation}
	\lambda^*=\left(\frac{L}{2},\frac{\mu}{2}\right), \quad
	\eta=\frac{16\mu}{L^2}\alpha^{-3/4}\epsilon, \quad
	\alpha=\frac{\mu}{100L}\in\automedpar{0,1}, \quad
	d=\left\lfloor\frac{\Delta L\sqrt{\kappa}}{12800}\epsilon^{-2}\right\rfloor\geq 3,
\end{equation}
we have 
\begin{itemize}
	\item The proposed function $ f\in\mathcal{F}_{\mathrm{NCSC}}^{L,\mu,\Delta} $.
	\item To obtain a point $ \hat{x}\in\mathbb{R}^{d+1} $ such that $ \|\nabla\Phi(\hat{x})\|\leq\epsilon $, the number of FO queries required by the algorithm $ \mathtt{A}\in\mathcal{A} $ is at least
	$2d-1=\Omega\left(\sqrt{\kappa} \Delta L \epsilon^{-2}\right)$, namely, 
	\begin{equation}
        \mathrm{Compl}_\epsilon
        \autopar{
        \mathcal{F}_{\mathrm{NCSC}}^{L,\mu,\Delta},\mathcal{A},\mathbb{O}_{\mathrm{FO}}
        }
        =
        \Omega\autopar{\sqrt{\kappa} \Delta L \epsilon^{-2}}.
    \end{equation}
\end{itemize}
\end{theorem}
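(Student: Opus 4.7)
The plan is to verify that the scaled hard instance $f(x,y) = \eta^2 F_d(x/\eta, y/\eta; \lambda^*, \alpha)$ lies in $\mathcal{F}_{\mathrm{NCSC}}^{L,\mu,\Delta}$ and then exploit the alternating zero-chain structure of $F_d$ to count how many oracle calls any linear-span algorithm must make before the primal gradient norm can fall below $\epsilon$. For membership, Lemma \ref{lm:scaling} shows that the scaling $(x,y) \mapsto \eta^2 g(x/\eta, y/\eta)$ preserves both joint Lipschitz smoothness and strong concavity in $y$, so Lemma \ref{LM:NCSC_LB_F_D}(iv) applied with $\lambda^* = (L/2,\mu/2)$ and $\alpha = \mu/(100L)$ immediately gives that $f$ is $L$-smooth and $\mu$-strongly concave in $y$. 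For the initial-gap requirement, Lemma \ref{lm:Primal_Scaled_Deter_Hard_Instance} yields $\Phi(x) = \eta^2 \Phi_d(x/\eta;\lambda^*,\alpha)$, whence Lemma \ref{LM:NCSC_LB_PHI}(ii) bounds the gap by $\eta^2 \frac{(\lambda_1^*)^2}{2\lambda_2^*}\bigl(\sqrt{\alpha}/2 + 10\alpha d\bigr)$; substituting $\eta = 16\mu L^{-2}\alpha^{-3/4}\epsilon$, simplifying with $\alpha = 1/(100\kappa)$, and using the standing hypothesis $\epsilon^2 \leq \Delta L/64000$ reduces the check to routine arithmetic showing that, for the stated $d$, the gap is at most $\Delta$.

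For the oracle-complexity half, I would start from $(x^0,y^0) = (0,0)$ and induct on $t$ using the linear-span update \eqref{eq:alg_protocol_deter} together with Lemma \ref{LM:NCSC_LB_F_D}(iii) to establish the activation pattern $x^t \in \mathcal{X}_{\lceil t/2 \rceil}$ and $y^t \in \mathcal{Y}_{\lfloor t/2 \rfloor}$. Concretely, cases (a)-(c) of property (iii) alternately permit at most one new coordinate of $x$ or of $y$ to be activated per query, so whenever $t \leq 2d-2$ both $x^t_d$ and $x^t_{d+1}$ remain zero. For such an iterate, the scaled chain rule $\nabla\Phi(x) = \eta\,\nabla\Phi_d(x/\eta;\lambda^*,\alpha)$ combined with Lemma \ref{LM:NCSC_LB_PHI}(i) yields
\begin{equation}
    \|\nabla\Phi(x^t)\| \geq \eta\cdot\frac{(\lambda_1^*)^2}{8\lambda_2^*}\,\alpha^{3/4},
\end{equation}
and plugging in the parameter choices makes the right-hand side equal to $2\epsilon$. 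Hence no iterate of index $\leq 2d-2$ can be $\epsilon$-stationary, so $T_\epsilon(f,\mathtt{A}) \geq 2d-1$; since $d = \lfloor \Delta L\sqrt{\kappa}\,\epsilon^{-2}/12800 \rfloor$, taking the supremum over $f \in \mathcal{F}_{\mathrm{NCSC}}^{L,\mu,\Delta}$ delivers $\mathrm{Compl}_\epsilon = \Omega(\sqrt{\kappa}\,\Delta L\,\epsilon^{-2})$.

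The step I expect to be the main obstacle is not any single calculation but rather orchestrating the four parameters $(\alpha,\eta,d,\lambda^*)$ so that three conflicting requirements hold simultaneously: the initial gap stays below $\Delta$, the gradient at every ``frozen-coordinate'' iterate exceeds $\epsilon$, and the smoothness/strong-concavity budget is exactly $(L,\mu)$. The $\sqrt{\kappa}$ improvement over the bare nonconvex lower bound emerges precisely from this balance: Lemma \ref{LM:NCSC_LB_PHI}(i) provides a gradient of order $(\lambda_1^*)^2\alpha^{3/4}/\lambda_2^*$ while the same lemma's gap bound scales linearly in $\alpha d$, and choosing $\alpha = \Theta(1/\kappa)$ is what lets $d$ absorb the factor $\sqrt{\kappa}$ while keeping the gap bounded. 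Verifying that the two smallness conditions on $\epsilon$ stated in the theorem are the right thresholds for both (i) $d \geq 3$ and (ii) the gap-vs-gradient tradeoff is where all constants must be nailed down; this is tedious bookkeeping but conceptually routine once the zero-chain activation schedule and the scaling identity for $\Phi$ are in hand.
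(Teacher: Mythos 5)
Your proposal mirrors the paper's proof essentially step for step: membership via Lemma \ref{lm:scaling} and Lemma \ref{LM:NCSC_LB_F_D}(iv), the primal identity from Lemma \ref{lm:Primal_Scaled_Deter_Hard_Instance}, the gap bound from Lemma \ref{LM:NCSC_LB_PHI}(ii), and the alternating zero-chain count giving $2d-1$ queries before $x_d$ can be activated. The only quibble is arithmetic: with $\lambda^*=(L/2,\mu/2)$ and $\eta=\frac{16\mu}{L^2}\alpha^{-3/4}\epsilon$ the gradient lower bound $\eta\frac{(\lambda_1^*)^2}{8\lambda_2^*}\alpha^{3/4}$ evaluates to exactly $\epsilon$ (as in the paper), not $2\epsilon$, which does not affect the argument.
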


\begin{proof}
	First, we verify the smoothness and strong concavity of the function $f$. According to Lemma \ref{LM:NCSC_LB_F_D}, $ \alpha\leq\frac{\mu}{100L} $ implies that $ F_d(x,y;\lambda^*,\alpha) $ is $ L $-smooth and $ \mu $-strongly concave in $ y $. Given that $ f $ is a scaled version of $ F_d $, by Lemma \ref{lm:scaling}, it is easy to verify that $ f $ is also $ L $-smooth and $ \mu $-strongly concave in $ y $.
	
	Then by Lemma \ref{lm:Primal_Scaled_Deter_Hard_Instance}, we have
	\begin{equation}
	    \Phi(x)=\eta^2\Phi_d\autopar{\frac{x}{\eta};\lambda^*,\alpha},
	\end{equation}
	where $\Phi_d$ is defined in \eqref{eq:LB_hard_instance_deter_Phi}. Next we check the initial primal function gap, by Lemma \ref{LM:NCSC_LB_PHI} and parameter substitution,
	\begin{equation}
		\Phi(0)-\inf_x\Phi(x)=\eta^2\left(\Phi_d(0)-\inf_x\Phi_d(x)\right)
		\leq
		\frac{\eta^2L^2}{4\mu}\left(\frac{\sqrt{\alpha}}{2}+10\alpha d\right)
		=
		\frac{64\mu}{L^2}\autopar{\frac{1}{2\alpha}+\frac{10d}{\sqrt{\alpha}}}\epsilon^2,
	\end{equation}
	by substituting $ \alpha $ and $ d $ into the RHS above, we have
	\begin{equation}
		\begin{split}
			\frac{64\mu}{L^2}\autopar{\frac{1}{2\alpha}+\frac{10d}{\sqrt{\alpha}}}\epsilon^2
			\leq\ &
			\frac{64\mu}{L^2}\left(\frac{50L}{\mu}+100\sqrt{\frac{L}{\mu}}\cdot\frac{\Delta L\sqrt{\kappa}}{12800}\epsilon^{-2}\right)\epsilon^2\\
			\leq\ &
			\frac{64}{L}\left(50+\frac{\Delta L}{128}\epsilon^{-2}\right)\epsilon^2
			\leq
			\frac{64}{L}\left(\frac{\Delta L}{64}\epsilon^{-2}\right)\epsilon^2=\Delta.
		\end{split}
	\end{equation}
	The second inequality holds because $ \epsilon $ above is set to be small enough than $\frac{\Delta L}{6400}$. We conclude that $ f\in\mathcal{F}_{\mathrm{NCSC}}^{L,\mu,\Delta} $.
	
	%Then reflect on the nonconvergence condition,
	We now discuss the lower bound argument. Based on Lemma \ref{LM:NCSC_LB_PHI} and the setting of $ \eta $, we have when $ x_d=x_{d+1}=0 $, 
	\begin{equation}
		\label{eq:NCSC_LB_nonconvergence}
		\left\|\nabla\Phi(x)\right\|
		=
		\eta\left\|\nabla \Phi_d\left(\frac{x}{\eta};\lambda^*,\alpha\right)\right\|
		\geq
		\frac{\eta L^2}{16\mu}\alpha^{3/4}=\epsilon.
	\end{equation}
	So starting from $ (x,y)=(0,0)\in\mathbb{R}^{d+1}\times\mathbb{R}^{d+2} $, we cannot get the primal stationarity convergence at least until $ x_d\neq 0 $. By the ``alternating zero-chain" mechanism\footnote{Also known as the ``Domino argument" in \cite{ibrahim2020linear}.} in Lemma \ref{LM:NCSC_LB_F_D}, each update with the linear-span algorithm interacting with the FO oracle call will activate exactly one coordinate alternatively between $ x $ and $ y $. Therefore the algorithm $ \mathtt{A} $ requires at least $ 2d-1 $ queries to FO to activate the $d$-th element of $x$, i.e., $x_d$, which implies the lower bound is (note that $\epsilon$ is small enough such that $d\geq 3$)
	\begin{equation}
	    2d-1=\Omega\autopar{\sqrt{\kappa} \Delta L \epsilon^{-2}},
	\end{equation}
	which concludes the proof. Notice that this argument works even for randomized algorithms, as long as they satisfy the linear-span assumption.
\end{proof}

\subsection{Averaged Smooth Finite-Sum NC-SC Lower Bound}
\label{sec:Apdx_LM_NCSC_LB_FS_AS}

Similar to the deterministic NC-SC case, here we still start from several important lemmas and proceed to the proof of Theorem \ref{THM:LB_NCSC_FS_AS}.

\subsubsection{Hard Instance Construction}
\label{sec:Apdx_hard_instance_FS_AS_properties}

Recall the (unscaled) hard instance in averaged smooth finite-sum case in \eqref{eq:LB_hard_instance_FS}: $H_d:\mathbb{R}^{d+2}\times\mathbb{R}^{d+1}\rightarrow\mathbb{R}$, $\Gamma_d^n:\mathbb{R}^{n(d+1)}\rightarrow\mathbb{R}$ and
\begin{equation}
    \begin{split}
        H_d(x,y;\lambda,\alpha)
    	\triangleq\ &
    	\lambda_1\autoprod{B_dx,y}-
    	\lambda_2\|y\|^2-\frac{\lambda_1^2\sqrt{\alpha}}{2\lambda_2}\autoprod{e_1,x}-
    	\frac{\lambda_1^2\alpha}{4\lambda_2}x_{d+1}^2+
    	\frac{\lambda_1^2\sqrt{\alpha}}{4\lambda_2},\\
    	\Gamma_d^n(x)
    	\triangleq\ &
    	\sum_{i=1}^n\sum_{j=i(d+1)-d}^{i(d+1)-1}\Gamma(x_j),
    \end{split}
\end{equation}
then $\bar{f}_i, \bar{f}: \mathbb{R}^{n(d+1)}\times\mathbb{R}^{n(d+2)}\rightarrow\mathbb{R}$, $\{\mathbf{U}^{(i)}\}_{i=1}^n \in \mathrm{\mathbf{Orth}}(d+1,n(d+1),n)$, $\{\mathbf{V}^{(i)}\}_{i=1}^n \in \mathrm{\mathbf{Orth}}(d+2,n(d+2),n)$ and
\begin{equation}
	\begin{split}
	    \bar{f}_i(x,y)
    	\triangleq\ &
    	H_d\autopar{\mathbf{U}^{(i)}x,\mathbf{V}^{(i)}y;\lambda,\alpha}+\frac{\lambda_1^2\alpha}{2n\lambda_2}\Gamma_d^n(x),\\
    	\bar{f}(x,y)
    	\triangleq\ &
    	\frac{1}{n}\sum_{i=1}^{n}\bar{f}_i(x,y)
    	=
    	\frac{1}{n}\sum_{i=1}^{n}\automedpar{H_d\autopar{\mathbf{U}^{(i)}x,\mathbf{V}^{(i)}y;\lambda,\alpha}+\frac{\lambda_1^2\alpha}{2n\lambda_2}\Gamma_d^n(x)}.
	\end{split}
\end{equation}
i.e., by denoting $u^{(i)}\triangleq\mathbf{U}^{(i)}x$ and note that $\autonorm{y}^2=\sum_{i=1}^{n}\autonorm{\mathbf{V}^{(i)}y}^2$,
\begin{equation}
    \label{eq:LB_AS_FS_Hard_Instance_Detailed}
    \begin{split}
        &\bar{f}(x,y)\\
        =\ &
        \frac{1}{n}\sum_{i=1}^{n}
        \Bigg[
        \lambda_1\autoprod{B_d\mathbf{U}^{(i)}x,\mathbf{V}^{(i)}y}-
    	\lambda_2\|\mathbf{V}^{(i)}y\|^2-\frac{\lambda_1^2\sqrt{\alpha}}{2\lambda_2}\autoprod{e_1,\mathbf{U}^{(i)}x}+
    	\frac{\lambda_1^2\alpha}{2n\lambda_2}\Gamma_d^n(x)-
    	\frac{\lambda_1^2\alpha}{4\lambda_2}\autopar{u^{(i)}_{d+1}}^2+
    	\frac{\lambda_1^2\sqrt{\alpha}}{4\lambda_2}\Bigg]\\
    	=\ &
    	-\frac{\lambda_2}{n}\|y\|^2
    	+
    	\frac{1}{n}\sum_{i=1}^{n}
        \Bigg[\lambda_1\autoprod{B_d\mathbf{U}^{(i)}x,\mathbf{V}^{(i)}y}-
    	\frac{\lambda_1^2\sqrt{\alpha}}{2\lambda_2}\autoprod{e_1,\mathbf{U}^{(i)}x}+
    	\frac{\lambda_1^2\alpha}{2n\lambda_2}\Gamma_d^n(x)-
    	\frac{\lambda_1^2\alpha}{4\lambda_2}\autopar{u^{(i)}_{d+1}}^2+
    	\frac{\lambda_1^2\sqrt{\alpha}}{4\lambda_2}\Bigg]
    	,
    \end{split}
\end{equation}
so $\bar{f}$ is $\frac{2\lambda_2}{n}$-strongly concave in $y$. Recall the gradient of $f_i$:
    \begin{equation}
        \begin{split}
            \nabla_x f_i(x,y)=\ &
            \lambda_1\autopar{\mathbf{U}^{(i)}}^\top B_d^\top \mathbf{V}^{(i)}y-
        	\frac{\lambda_1^2\sqrt{\alpha}}{2\lambda_2}\autopar{\mathbf{U}^{(i)}}^\top e_1+
        	\frac{\lambda_1^2\alpha}{2n\lambda_2}\nabla\Gamma_d^n(x)
        	-\frac{\lambda_1^2\alpha}{2\lambda_2}\autopar{\mathbf{U}^{(i)}}^\top e_{d+1}e_{d+1}^\top\mathbf{U}^{(i)}x,\\
        	\nabla_y f_i(x,y)=\ &
        	\lambda_1\autopar{\mathbf{V}^{(i)}}^\top B_d\mathbf{U}^{(i)}x-2\lambda_2\autopar{\mathbf{V}^{(i)}}^\top\mathbf{V}^{(i)}y,
        \end{split}
    \end{equation}
then we discuss the smoothness of $\{\bar{f}_i\}_i$.

\begin{lemma}[Properties of $\bar{f}$]
    \label{lm:LB_FS_Hard_Instance_bar_f_base_AS}
    For $n\in\mathbb{N}^+$, $ L\geq 2n\mu>0 $, if we set
    \begin{equation}
        \lambda=\lambda^*=(\lambda_1^*,\lambda_2^*)=\autopar{\sqrt{\frac{n}{40}}L,\frac{n\mu}{2}} 
        \quad\text{ and }\quad
        \alpha=\frac{n\mu}{50L},
    \end{equation}
    then the function $\{\bar{f}_i\}_i$ is $L$-averaged smooth, and $ \bar{f}(x,\cdot) $ is $ \mu $-strongly concave for any fixed $ x\in\mathbb{R}^{d+1} $.
\end{lemma}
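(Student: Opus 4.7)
The plan is to establish the two claims separately: strong concavity in $y$ is almost immediate from the structure of $\bar{f}$, whereas averaged smoothness requires a more careful gradient computation followed by a numerical check on the specific parameter choices.

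For strong concavity, I would read off the $y$-dependence directly from the expanded form \eqref{eq:LB_AS_FS_Hard_Instance_Detailed}. The only quadratic-in-$y$ term is $-\tfrac{\lambda_2}{n}\|y\|^2$, which arises from collapsing $\sum_i\|\mathbf{V}^{(i)}y\|^2=\|y\|^2$ by orthogonality of $\{\mathbf{V}^{(i)}\}$; all remaining $y$-dependent pieces are linear in $y$. Hence $\bar{f}(x,\cdot)$ is $(2\lambda_2/n)$-strongly concave, which equals $\mu$ under the choice $\lambda_2=n\mu/2$.

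For averaged smoothness, I would first write $\nabla_x\bar{f}_i$ and $\nabla_y\bar{f}_i$ explicitly and then form their differences at two points. The $y$-gradient has two terms (involving $B_d\mathbf{U}^{(i)}(x_1-x_2)$ and $\mathbf{V}^{(i)}(y_1-y_2)$) and can be bounded per-$i$ via $\|a+b\|^2\le 2\|a\|^2+2\|b\|^2$ together with $\|B_d\|\le 2$ from Lemma \ref{LM:NCSC_LB_F_D}. The $x$-gradient has three pieces, the delicate one being $\tfrac{\lambda_1^2\alpha}{2n\lambda_2}\nabla\Gamma_d^n(x)$, which couples all components of $x$; here I would exploit that $\nabla\Gamma$ is $C_\gamma$-Lipschitz with $C_\gamma=360$ (the constant already used inside Lemma \ref{LM:NCSC_LB_F_D}), so $\nabla\Gamma_d^n$ is coordinate-wise $C_\gamma$-Lipschitz, and apply a three-term norm inequality. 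Summing over $i$ and invoking the orthogonality identities $\sum_i\|\mathbf{U}^{(i)}z\|^2=\|z\|^2$, $\sum_i\|\mathbf{V}^{(i)}w\|^2=\|w\|^2$, and the trivial $\sum_i(u^{(i)}_{d+1})^2\le\|x\|^2$, I would arrive at
\[
\tfrac{1}{n}\sum_{i=1}^n\|\nabla\bar{f}_i(x_1,y_1)-\nabla\bar{f}_i(x_2,y_2)\|^2 \;\le\; L_F^2\bigl(\|x_1-x_2\|^2+\|y_1-y_2\|^2\bigr),
\]
with $L_F$ of the form given in Lemma \ref{lm:properties_hard_instance_FS_AS}(i).

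The closing step is to substitute $\lambda_1=\sqrt{n/40}\,L$, $\lambda_2=n\mu/2$, $\alpha=n\mu/(50L)$ and verify $L_F\le L$. Under the hypothesis $L\ge 2n\mu$ one has $n\mu\le L/2$, which makes $\alpha\le 1/100\le 1$ and tames the $y$-coefficient $\lambda_2^2/n=n\mu^2/4\le \mu L/2$. The main obstacle I anticipate is the numerical bookkeeping inside the $x$-coefficient: the pieces $\tfrac{C_\gamma^2\lambda_1^4\alpha^2}{n^2\lambda_2^2}$, $\tfrac{\lambda_1^4\alpha^2}{n\lambda_2^2}$ and $\tfrac{8\lambda_1^2}{n}$ must together be at most $L^2$, and the specific constants $1/40$ in $\lambda_1^2/(nL^2)$ and $1/50$ in $\alpha L/(n\mu)$ appear to be tuned precisely so that the $C_\gamma=360$ factor is absorbed by the $1/2500$ coming from $\alpha^2$, while the dominant $\tfrac{8\lambda_1^2}{n}=L^2/5$ contribution still leaves headroom under $L^2$. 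No new ideas are needed at this stage, only careful tracking.
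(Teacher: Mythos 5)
Your proposal is correct and follows essentially the same route as the paper's proof: strong concavity is read off from the collapsed $-\tfrac{\lambda_2}{n}\|y\|^2$ term, and averaged smoothness is obtained by bounding the gradient differences term-by-term with Young's inequality, $\|B_d\|\le 2$, the $C_\gamma$-Lipschitzness of $\nabla\Gamma$, and the orthogonality identities, followed by the same numerical substitution of $\lambda^*$ and $\alpha$. The constants you flag (the $1/40$, $1/50$, and $C_\gamma=360$) are handled in the paper exactly as you anticipate.
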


\begin{proof}
    For the strong concavity, note that $\bar{f}$ is $\frac{2\lambda_2}{n}$-strongly concave, so by substitution we have $\bar{f}$ is $\mu$-strongly concave in $y$. Then for the average smoothness, by definition, we have for any $(x_1,y_1), (x_2,y_2)\in\mathbb{R}^{d+1}\times\mathbb{R}^{d+1}$,
    \begin{equation}
        \begin{split}
            &\frac{1}{n}\sum_{i=1}^{n}\autonorm{\nabla f_i(x_1,y_1)-\nabla f_i(x_2,y_2)}^2\\
            =\ &
            \frac{1}{n}\sum_{i=1}^{n}\automedpar{\autonorm{\nabla_x f_i(x_1,y_1)-\nabla_x f_i(x_2,y_2)}^2+\autonorm{\nabla_y f_i(x_1,y_1)-\nabla_y f_i(x_2,y_2)}^2},
        \end{split}
    \end{equation}
    then note that $\Gamma_d^n$ and $\Gamma_d$ enjoys the same Lipschitz smoothness parameter as that of $\Gamma$, so we have
    \begin{equation}
        \begin{split}
            &\autonorm{\nabla_x f_i(x_1,y_1)-\nabla_x f_i(x_2,y_2)}^2\\
            \leq\ &
            4\autonorm{\lambda_1\autopar{\mathbf{U}^{(i)}}^\top B_d^\top \mathbf{V}^{(i)}\autopar{y_1-y_2}}^2+
        	4\autonorm{\frac{\lambda_1^2\alpha}{2n\lambda_2}\autopar{\nabla\Gamma_d^n(x_1)-\nabla\Gamma_d^n(x_2)}}^2\\
        	&\qquad\qquad\qquad\qquad\qquad\qquad\qquad +
        	4\autonorm{\frac{\lambda_1^2\alpha}{2\lambda_2}\autopar{\mathbf{U}^{(i)}}^\top e_{d+1}e_{d+1}^\top\mathbf{U}^{(i)}\autopar{x_1-x_2}}^2\\
        	=\ &
        	4\lambda_1^2\autonorm{B_d^\top \mathbf{V}^{(i)}\autopar{y_1-y_2}}^2
        	+
        	\frac{\lambda_1^4\alpha^2}{n^2\lambda_2^2}\autonorm{\nabla\Gamma_d^n(x_1)-\nabla\Gamma_d^n(x_2)}^2
        	+
        	\frac{\lambda_1^4\alpha^2}{\lambda_2^2}\autonorm{e_{d+1}e_{d+1}^\top\mathbf{U}^{(i)}\autopar{x_1-x_2}}^2\\
        	\leq\ &
        	16\lambda_1^2\autonorm{\mathbf{V}^{(i)}\autopar{y_1-y_2}}^2
        	+
        	\frac{C_\gamma^2\lambda_1^4\alpha^2}{n^2\lambda_2^2}\autonorm{x_1-x_2}^2
        	+
        	\frac{\lambda_1^4\alpha^2}{\lambda_2^2}\autonorm{\mathbf{U}^{(i)}\autopar{x_1-x_2}}^2,
        \end{split}
    \end{equation}
    and
    \begin{equation}
        \begin{split}
            &\autonorm{\nabla_y f_i(x_1,y_1)-\nabla_y f_i(x_2,y_2)}^2
            =\ 
            \autonorm{\lambda_1\autopar{\mathbf{V}^{(i)}}^\top B_d\mathbf{U}^{(i)}\autopar{x_1-x_2}-2\lambda_2\autopar{\mathbf{V}^{(i)}}^\top\mathbf{V}^{(i)}\autopar{y_1-y_2}}^2\\
            \leq\ &
            2\autonorm{\lambda_1\autopar{\mathbf{V}^{(i)}}^\top B_d\mathbf{U}^{(i)}\autopar{x_1-x_2}}^2
            +
            2\autonorm{2\lambda_2\autopar{\mathbf{V}^{(i)}}^\top\mathbf{V}^{(i)}\autopar{y_1-y_2}}^2\\
            \leq\ &
            8\lambda_1^2\autonorm{\mathbf{U}^{(i)}\autopar{x_1-x_2}}^2
            +
            8\lambda_2^2\autonorm{\mathbf{V}^{(i)}\autopar{y_1-y_2}}^2,
        \end{split}
    \end{equation}
    so we have
    \begin{equation}
        \begin{split}
            &\frac{1}{n}\sum_{i=1}^{n}\autonorm{\nabla f_i(x_1,y_1)-\nabla f_i(x_2,y_2)}^2\\
            \leq\ &
            \frac{1}{n}\sum_{i=1}^{n}
            \automedpar{\autopar{16\lambda_1^2+8\lambda_2^2}\autonorm{\mathbf{V}^{(i)}\autopar{y_1-y_2}}^2
        	+
        	\autopar{\frac{\lambda_1^4\alpha^2}{\lambda_2^2}+8\lambda_1^2}\autonorm{\mathbf{U}^{(i)}\autopar{x_1-x_2}}^2+\frac{C_\gamma^2\lambda_1^4\alpha^2}{n^2\lambda_2^2}\autonorm{x_1-x_2}^2}\\
        	=\ &
        	\frac{1}{n}\autopar{16\lambda_1^2+8\lambda_2^2}\sum_{i=1}^{n}\automedpar{\autonorm{\mathbf{V}^{(i)}\autopar{y_1-y_2}}^2}+\frac{1}{n}\autopar{\frac{\lambda_1^4\alpha^2}{\lambda_2^2}+8\lambda_1^2}\sum_{i=1}^{n}\automedpar{\autonorm{\mathbf{U}^{(i)}\autopar{x_1-x_2}}^2}+\frac{C_\gamma^2\lambda_1^4\alpha^2}{n^2\lambda_2^2}\autonorm{x_1-x_2}^2\\
        	=\ &
        	\frac{1}{n}\autopar{16\lambda_1^2+8\lambda_2^2}\autonorm{y_1-y_2}^2+\frac{1}{n}\autopar{\frac{\lambda_1^4\alpha^2}{\lambda_2^2}+8\lambda_1^2}\autonorm{x_1-x_2}^2+\frac{C_\gamma^2\lambda_1^4\alpha^2}{n^2\lambda_2^2}\autonorm{x_1-x_2}^2\\
        	\leq\ &
        	\frac{1}{n}\max\autobigpar{16\lambda_1^2+8\lambda_2^2, \frac{C_\gamma^2\lambda_1^4\alpha^2}{n\lambda_2^2}+\frac{\lambda_1^4\alpha^2}{\lambda_2^2}+8\lambda_1^2}\autopar{\autonorm{x_1-x_2}^2+\autonorm{y_1-y_2}^2},
        \end{split}
    \end{equation}
    then note that $\alpha\in\automedpar{0,1}$ because we set $ L\geq 2n\mu \geq \frac{1}{50}n\mu$, so substitute parameters into the above, we have
    \begin{equation}
        \begin{split}
            & \frac{1}{n}\max\autobigpar{16\lambda_1^2+8\lambda_2^2, \frac{C_\gamma^2\lambda_1^4\alpha^2}{n\lambda_2^2}+\frac{\lambda_1^4\alpha^2}{\lambda_2^2}+8\lambda_1^2}\\
            =\ &
            \frac{1}{n}\max\autobigpar{16\lambda_1^2+2n^2\mu^2, \frac{4C_\gamma^2\lambda_1^4\alpha^2}{n^3\mu^2}+\frac{4\lambda_1^4\alpha^2}{n^2\mu^2}+8\lambda_1^2}\\
            \leq\ &
            \frac{1}{n}\max\autobigpar{16\lambda_1^2+2n^2\mu^2, 1000000\alpha^2\frac{\lambda_1^4}{n^3\mu^2}+8\lambda_1^2}\\
            =\ &
            \frac{1}{n}\max\autobigpar{\frac{16nL^2}{40}+2n^2\mu^2, 1000000\cdot\frac{n^2\mu^2}{2500L^2}\cdot\frac{n^2L^4}{1600n^3\mu^2}+\frac{8nL^2}{40}}\\
            \leq\ &
            \max\autobigpar{\frac{2L^2}{5}+\frac{L^2}{2}, \frac{L^2}{4}+\frac{L^2}{5}}\\
            \leq\ &
            \max\autobigpar{\frac{9L^2}{10}, \frac{9L^2}{20}}
            \leq L^2,
        \end{split}
    \end{equation}
    where the first inequality is attained by the computation with the value of $C_\gamma=360$, the second inequality comes from the assumption $L\geq 2n\mu\geq 2\sqrt{n}\mu$; the last equality is attained by parameter substitution, which verifies the conclusion.
\end{proof}

Next we discuss the primal function of the finite-sum hard instance.

\begin{lemma}[Primal of Averaged Smooth Finite-Sum Hard Instance]
    \label{lm:primal_hard_instance_FS_AS}
    For the function $\bar{f}=\frac{1}{n}\sum_{i=1}^n\bar{f}_i$ defined in \eqref{eq:LB_hard_instance_FS}, define $ \bar{\Phi}(x)\triangleq \max_y \bar{f}(x,y)$, then we have
    \begin{equation}
        \bar{\Phi}(x)=\frac{1}{n}\sum_{i=1}^{n}\bar{\Phi}_i(x),
        \quad \text{where}\quad
        \bar{\Phi}_i(x) \triangleq \Phi_d\autopar{\mathbf{U}^{(i)}x},
    \end{equation}
    while $\Phi_d$ is defined in \eqref{eq:LB_hard_instance_deter_Phi}.
\end{lemma}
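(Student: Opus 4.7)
The plan is to exploit the block-diagonal structure of the orthogonal family $\{\mathbf{V}^{(i)}\}_{i=1}^n \in \mathrm{\mathbf{Orth}}(d+2,n(d+2),n)$ to show that the maximization over $y$ decouples into $n$ independent maximizations, each of which produces (up to an affine correction in $\Gamma$) a copy of $\Phi_d$ evaluated at $\mathbf{U}^{(i)}x$.

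First I would rewrite $\bar{f}(x,y)$ starting from the expansion in \eqref{eq:LB_AS_FS_Hard_Instance_Detailed}, grouping all terms that depend on $y$. The only $y$-dependent contributions are the bilinear term $\lambda_1\langle B_d \mathbf{U}^{(i)}x,\mathbf{V}^{(i)}y\rangle$ and the quadratic $-\lambda_2\|\mathbf{V}^{(i)}y\|^2$ inside $H_d$. Using that the matrices $\mathbf{V}^{(i)}$ in \eqref{eq:LB_FS_Orthogonal_Matrix} are block-selectors, the map $y \mapsto (\mathbf{V}^{(1)}y,\dots,\mathbf{V}^{(n)}y)$ is a bijection from $\mathbb{R}^{n(d+2)}$ onto $(\mathbb{R}^{d+2})^n$. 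Consequently,
\begin{equation*}
    \max_{y\in\mathbb{R}^{n(d+2)}} \sum_{i=1}^n g_i(\mathbf{V}^{(i)}y) \;=\; \sum_{i=1}^n \max_{v^{(i)}\in\mathbb{R}^{d+2}} g_i(v^{(i)}),
\end{equation*}
where each $g_i$ collects the $y$-dependent portion of the $i$-th summand in $\bar f$.

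Next I would identify each inner maximization with a shifted $\Phi_d$. Because $H_d(u,v;\lambda,\alpha) = F_d(u,v;\lambda,\alpha) - \tfrac{\lambda_1^2\alpha}{2\lambda_2}\sum_{j=1}^{d}\Gamma(u_j)$ (the $\Gamma$-sum is the only part of $F_d$ absent from $H_d$, and it is independent of $y$), the definition \eqref{eq:LB_hard_instance_deter_Phi} of $\Phi_d$ gives
\begin{equation*}
    \max_{v\in\mathbb{R}^{d+2}} H_d(u,v;\lambda,\alpha)
    \;=\; \Phi_d(u;\lambda,\alpha) \;-\; \frac{\lambda_1^2\alpha}{2\lambda_2}\sum_{j=1}^{d}\Gamma(u_j).
\end{equation*}
Setting $u = \mathbf{U}^{(i)}x$ and averaging over $i$ then yields
\begin{equation*}
    \frac{1}{n}\sum_{i=1}^n \max_{v\in\mathbb{R}^{d+2}} H_d(\mathbf{U}^{(i)}x,v;\lambda,\alpha)
    \;=\; \frac{1}{n}\sum_{i=1}^n \Phi_d(\mathbf{U}^{(i)}x;\lambda,\alpha) \;-\; \frac{\lambda_1^2\alpha}{2n\lambda_2}\,\Gamma_d^n(x),
\end{equation*}
where I use identity \eqref{eq:Gamma_nd_equivalent} to recognize the combined $\Gamma$-sum as $\Gamma_d^n(x)$.

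Finally, combining this with the $y$-independent term $\tfrac{\lambda_1^2\alpha}{2n\lambda_2}\Gamma_d^n(x)$ appearing in $\bar f$ (added back $n$ times then divided by $n$), the two $\Gamma_d^n(x)$ contributions cancel exactly, producing $\bar\Phi(x) = \tfrac{1}{n}\sum_{i=1}^n \Phi_d(\mathbf{U}^{(i)}x;\lambda,\alpha) = \tfrac{1}{n}\sum_{i=1}^n \bar\Phi_i(x)$. I do not foresee a genuine obstacle here; the only point requiring care is the bookkeeping around the factor of $n$ in the $\Gamma_d^n(x)$ term of $\bar{f}_i$, which is specifically chosen so that the unwanted $\Gamma$ contribution from decoupling is precisely cancelled, leaving a clean finite-sum of primal functions.
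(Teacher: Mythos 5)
Your proof is correct and follows essentially the same route as the paper's: both exploit the block-selector structure of the $\mathbf{V}^{(i)}$ so that the maximization over $y$ separates across the $n$ blocks, and both use the identity \eqref{eq:Gamma_nd_equivalent} so that the $\Gamma_d^n$ contributions cancel exactly. The only organizational difference is that you decouple the max first and reuse the deterministic formula \eqref{eq:LB_hard_instance_deter_Phi} via the observation $H_d = F_d - \frac{\lambda_1^2\alpha}{2\lambda_2}\Gamma_d$, whereas the paper solves the first-order condition for the global maximizer $y^*(x)$ and re-derives each $\Phi_d(\mathbf{U}^{(i)}x)$ by direct substitution; both are valid and lead to the same cancellation.
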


\begin{proof}
    By the expression of $\bar{f}$ in \eqref{eq:LB_AS_FS_Hard_Instance_Detailed}, take the gradient over $y$ and set it as $0$, denote the maximizer as $y^*(x)$, we have
    \begin{equation}
        -\frac{2\lambda_2}{n}y^*(x)+\frac{1}{n}\sum_{i=1}^{n}
        \lambda_1\autopar{\mathbf{V}^{(i)}}^\top B_d\mathbf{U}^{(i)}x=0
        \quad\Longrightarrow\quad
        y^*(x)=\frac{\lambda_1}{2\lambda_2}\sum_{i=1}^{n}\autopar{\mathbf{V}^{(i)}}^\top B_d\mathbf{U}^{(i)}x,
    \end{equation}
    so we have
    \begin{equation}
        \begin{split}
            &\bar{\Phi}(x)=\bar{f}\autopar{x,y^*(x)}\\
            =\ &
            \frac{1}{n}\sum_{i=1}^{n}
            \automedpar{\frac{\lambda_1^2}{4\lambda_2}\autonorm{B_d\mathbf{U}^{(i)}x}^2-
        	\frac{\lambda_1^2\sqrt{\alpha}}{2\lambda_2}\autoprod{e_1,\mathbf{U}^{(i)}x}+
        	\frac{\lambda_1^2\alpha}{2n\lambda_2}\Gamma_d^n(x)-
        	\frac{\lambda_1^2\alpha}{4\lambda_2}\autopar{u^{(i)}_{d+1}}^2+
        	\frac{\lambda_1^2\sqrt{\alpha}}{4\lambda_2}}\\
            =\ &
            \frac{1}{n}\sum_{i=1}^{n}
            \automedpar{\frac{\lambda_1^2}{4\lambda_2}\autonorm{B_d\mathbf{U}^{(i)}x}^2-
        	\frac{\lambda_1^2\sqrt{\alpha}}{2\lambda_2}\autoprod{e_1,\mathbf{U}^{(i)}x}+
        	\frac{\lambda_1^2\alpha}{2n\lambda_2}\sum_{j=1}^{n}\Gamma_d\autopar{\mathbf{U}^{(j)}x}-
        	\frac{\lambda_1^2\alpha}{4\lambda_2}\autopar{u^{(i)}_{d+1}}^2+
        	\frac{\lambda_1^2\sqrt{\alpha}}{4\lambda_2}}\\
            =\ &
            \frac{1}{n}\sum_{i=1}^{n}
            \automedpar{\frac{\lambda_1^2}{4\lambda_2}\autonorm{B_d\mathbf{U}^{(i)}x}^2-
        	\frac{\lambda_1^2\sqrt{\alpha}}{2\lambda_2}\autoprod{e_1,\mathbf{U}^{(i)}x}+
        	\frac{\lambda_1^2\alpha}{2\lambda_2}\Gamma_d\autopar{\mathbf{U}^{(i)}x}-
        	\frac{\lambda_1^2\alpha}{4\lambda_2}\autopar{u^{(i)}_{d+1}}^2+
        	\frac{\lambda_1^2\sqrt{\alpha}}{4\lambda_2}}\\
        	=\ &
        	\frac{1}{n}\sum_{i=1}^{n}
            \automedpar{\frac{\lambda_1^2}{2\lambda_2}
            \autopar{\frac{1}{2}\autopar{\mathbf{U}^{(i)}x}^\top A_d\mathbf{U}^{(i)}x-
        	\sqrt{\alpha}\autoprod{e_1,\mathbf{U}^{(i)}x}+
        	\alpha\Gamma_d\autopar{\mathbf{U}^{(i)}x}
        	+
        	\frac{1-\alpha}{2}\autopar{u^{(i)}_{d+1}}^2+
        	\frac{\sqrt{\alpha}}{2}}
            }\\
            =\ &
            \frac{1}{n}\sum_{i=1}^{n}\Phi_d\autopar{\mathbf{U}^{(i)}x},
        \end{split}
    \end{equation}
    where the third equality follows from \eqref{eq:Gamma_nd_equivalent}, and $A_d$ and $\Phi_d$ are defined in \eqref{eq:matrix_A_d_definition} and \eqref{eq:LB_hard_instance_deter_Phi}, which concludes the proof.
\end{proof}

The above two lemmas proves the statements in Lemma \ref{lm:properties_hard_instance_FS_AS}. Before we present the main theorem, we first discuss the behavior of the scaled hard instance, which will be used in the final lower bound analysis.

\begin{lemma}[Primal of the Scaled Finite-Sum Hard Instance]
    \label{lm:Primal_Scaled_FS_AS_Hard_Instance}
    With the function $\bar{f}(x,y)$ and $\bar{f}_i(x,y)$ defined in \eqref{eq:LB_hard_instance_FS}, $ \bar{\Phi}(x)\triangleq \max_y \bar{f}(x,y)$, then for any $\eta\in\mathbb{R}$ and the following function:
    \begin{equation}
        f(x,y)=\frac{1}{n}\sum_{i=1}^{n}f_i(x,y)
        =\frac{1}{n}\sum_{i=1}^{n}\eta^2\bar{f}_i\autopar{\frac{x}{\eta},\frac{y}{\eta}},
    \end{equation}
    then for its primal function $\Phi(x)\triangleq\max_{y\in\mathbb{R}^{d+1}}f(x,y)$, we have
    \begin{equation}
        \Phi(x)=\frac{1}{n}\sum_{i=1}^{n}\Phi_i(x),
        \quad \text{where}\quad
        \Phi_i(x) =
        \eta^2\bar{\Phi}_i\autopar{\frac{x}{\eta}}
        =
        \eta^2\Phi_d\autopar{\frac{1}{\eta}\mathbf{U}^{(i)}x}.
    \end{equation}
\end{lemma}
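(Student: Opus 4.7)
The plan is a short computation that reduces the scaled finite-sum case to the already-established unscaled one (Lemma \ref{lm:properties_hard_instance_FS_AS}, or equivalently Lemma \ref{lm:primal_hard_instance_FS_AS}). First I would observe that the scaling commutes with the finite-sum averaging: since each component $f_i(x,y) = \eta^2 \bar{f}_i(x/\eta, y/\eta)$ is defined through the same rescaling map, we have
$$ f(x,y) = \frac{1}{n}\sum_{i=1}^n \eta^2 \bar{f}_i\!\left(\tfrac{x}{\eta}, \tfrac{y}{\eta}\right) = \eta^2 \bar{f}\!\left(\tfrac{x}{\eta}, \tfrac{y}{\eta}\right). $$
This is the same reduction used in Lemma \ref{lm:Primal_Scaled_Deter_Hard_Instance} for the deterministic case; the finite-sum structure does not interfere because the rescaling is a common factor that can be pulled outside of the average.

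Next I would compute the primal by a change of variables in the inner maximization. Writing $y' = y/\eta$ and noting that this is a bijection on $\mathbb{R}^{n(d+2)}$,
$$ \Phi(x) = \max_{y \in \mathbb{R}^{n(d+2)}} f(x,y) = \max_{y' \in \mathbb{R}^{n(d+2)}} \eta^2 \bar{f}\!\left(\tfrac{x}{\eta}, y'\right) = \eta^2 \bar{\Phi}\!\left(\tfrac{x}{\eta}\right). $$
Then I invoke Lemma \ref{lm:properties_hard_instance_FS_AS}(iv), which gives $\bar{\Phi}(u) = \tfrac{1}{n}\sum_{i=1}^n \Phi_d(\mathbf{U}^{(i)} u)$, with $u = x/\eta$, to obtain
$$ \Phi(x) = \eta^2 \cdot \frac{1}{n}\sum_{i=1}^n \Phi_d\!\left(\tfrac{1}{\eta}\mathbf{U}^{(i)} x\right) = \frac{1}{n}\sum_{i=1}^n \Phi_i(x), $$
which is exactly the claim, with $\Phi_i(x) = \eta^2 \bar\Phi_i(x/\eta) = \eta^2 \Phi_d(\tfrac{1}{\eta}\mathbf{U}^{(i)} x)$.

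There is essentially no technical obstacle here: the lemma is a bookkeeping statement that the scaling trick (already verified for smoothness in Lemma \ref{lm:scaling} and for the deterministic primal in Lemma \ref{lm:Primal_Scaled_Deter_Hard_Instance}) passes through the finite-sum averaging. The only thing to be careful about is that the maximization is taken over the full space $\mathbb{R}^{n(d+2)}$ so that the substitution $y' = y/\eta$ is a bijection — which it is, since $\eta \neq 0$ in the intended application. The lemma then slots directly into the scaled hard-instance construction in the proof of Theorem \ref{THM:LB_NCSC_FS_AS}, playing the same role that Lemma \ref{lm:Primal_Scaled_Deter_Hard_Instance} played in the deterministic lower-bound argument.
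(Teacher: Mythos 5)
Your proof is correct. The route differs slightly from the paper's in execution: the paper re-derives the maximizer $y^*(x)$ of the scaled objective from the first-order condition $\nabla_y f(x,y^*(x))=0$, substitutes it back, and expands term by term until the expression is recognized as $\frac{1}{n}\sum_i \eta^2\Phi_d(\frac{1}{\eta}\mathbf{U}^{(i)}x)$, whereas you first pull the common factor $\eta^2$ out of the average to get $f(x,y)=\eta^2\bar f(x/\eta,y/\eta)$, then perform the bijective change of variables $y'=y/\eta$ inside the maximization (valid since $\eta\neq 0$ and $\eta^2>0$, so the max commutes with the positive scaling), and finally invoke the unscaled identity $\bar\Phi(u)=\frac{1}{n}\sum_i\Phi_d(\mathbf{U}^{(i)}u)$. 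Your version is shorter and avoids recomputing the optimality condition for the scaled instance, at the cost of relying on the already-proved unscaled lemma as a black box; the paper's explicit computation is self-contained but essentially repeats the algebra of Lemma \ref{lm:primal_hard_instance_FS_AS} with the scaling carried along. Both arguments are sound and yield the same conclusion.
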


\begin{proof}
    Based on \eqref{eq:LB_AS_FS_Hard_Instance_Detailed}, we can write out the formulation of $f$:
    \begin{equation}
        \label{eq:LB_AS_FS_Hard_Instance_Scaled_Detailed}
        \begin{split}
            &f(x,y)
            =
            \eta^2\bar{f}\autopar{\frac{x}{\eta},\frac{y}{\eta}}\\
            =\ &
            \eta^2\Bigg(
        	-\frac{\lambda_2}{n}\autonorm{\frac{y}{\eta}}^2
        	+
        	\frac{1}{n}\sum_{i=1}^{n}
            \Bigg[\lambda_1\autoprod{B_d\mathbf{U}^{(i)}\frac{x}{\eta},\mathbf{V}^{(i)}\frac{y}{\eta}}-
        	\frac{\lambda_1^2\sqrt{\alpha}}{2\lambda_2}\autoprod{e_1,\mathbf{U}^{(i)}\frac{x}{\eta}}+
        	\frac{\lambda_1^2\alpha}{2n\lambda_2}\Gamma_d^n\autopar{\frac{x}{\eta}}\\
        	&\qquad\qquad\qquad\qquad\qquad\qquad\qquad\qquad\qquad\quad
        	-
        	\frac{\lambda_1^2\alpha}{4\lambda_2}\autopar{\frac{u^{(i)}_{d+1}}{\eta}}^2+
        	\frac{\lambda_1^2\sqrt{\alpha}}{4\lambda_2}\Bigg]\Bigg)\\
        	=\ &
        	-\frac{\lambda_2}{n}\autonorm{y}^2
        	+
        	\frac{1}{n}\sum_{i=1}^{n}
            \lambda_1\autoprod{B_d\mathbf{U}^{(i)}x,\mathbf{V}^{(i)}y}
            +
        	\eta^2\Bigg(
        	\frac{1}{n}\sum_{i=1}^{n}
            \Bigg[-
        	\frac{\lambda_1^2\sqrt{\alpha}}{2\lambda_2}\autoprod{e_1,\mathbf{U}^{(i)}\frac{x}{\eta}}
        	+
        	\frac{\lambda_1^2\alpha}{2n\lambda_2}\Gamma_d^n\autopar{\frac{x}{\eta}}\\
        	&\qquad\qquad\qquad\qquad\qquad\qquad\qquad\qquad\qquad\quad
        	-
        	\frac{\lambda_1^2\alpha}{4\lambda_2}\autopar{\frac{u^{(i)}_{d+1}}{\eta}}^2+
        	\frac{\lambda_1^2\sqrt{\alpha}}{4\lambda_2}\Bigg]\Bigg)
        	,
        \end{split}
    \end{equation}
    check the gradient over $y$ and set it to be $0$ to solve for $y^*(x)$, we have
    \begin{equation}
        \nabla_y f(x,y^*(x))=-\frac{2\lambda_2}{n}y^*(x)+\frac{\lambda_1}{n}\sum_{i=1}^{n}
        \autopar{\mathbf{V}^{(i)}}^\top B_d\mathbf{U}^{(i)}x=0
        \quad\Longrightarrow \quad
        y^*(x)=\frac{\lambda_1}{2\lambda_2}\sum_{i=1}^{n}
        \autopar{\mathbf{V}^{(i)}}^\top B_d\mathbf{U}^{(i)}x,
    \end{equation}
    which implies that
    \begin{equation}
        \begin{split}
            \mathbf{V}^{(i)}y^*(x)
            =\ &
            \frac{\lambda_1}{2\lambda_2}\sum_{j=1}^{n}
        	\mathbf{V}^{(i)}\autopar{\mathbf{V}^{(j)}}^\top B_d\mathbf{U}^{(j)}x
        	=
        	\frac{\lambda_1}{2\lambda_2} B_d\mathbf{U}^{(i)}x\\
        	\autonorm{y^*(x)}^2
        	=\ &
        	\frac{\lambda_1^2}{4\lambda_2^2}\sum_{i=1}^{n}\autonorm{B_d\mathbf{U}^{(i)}x}^2,
        \end{split}
    \end{equation}
    so the primal function is
    \begin{equation}
        \begin{split}
            &\Phi(x)=f(x,y^*(x))
            =
            \eta^2\bar{f}\autopar{\frac{x}{\eta},\frac{y^*(x)}{\eta}}\\
        	=\ &
        	\frac{\lambda_1^2}{4\lambda_2n}\sum_{i=1}^{n}\autonorm{B_d\mathbf{U}^{(i)}x}^2
            +
        	\frac{\eta^2}{n}\sum_{i=1}^{n}
            \Bigg[-
        	\frac{\lambda_1^2\sqrt{\alpha}}{2\lambda_2}\autoprod{e_1,\mathbf{U}^{(i)}\frac{x}{\eta}}
        	+
        	\frac{\lambda_1^2\alpha}{2n\lambda_2}\Gamma_d^n\autopar{\frac{x}{\eta}}
        	-
        	\frac{\lambda_1^2\alpha}{4\lambda_2}\autopar{\frac{u^{(i)}_{d+1}}{\eta}}^2+
        	\frac{\lambda_1^2\sqrt{\alpha}}{4\lambda_2}\Bigg]\\
        	=\ &
        	\frac{\eta^2}{n}\sum_{i=1}^{n}
            \Bigg[
            \frac{\lambda_1^2}{4\lambda_2}\autonorm{B_d\mathbf{U}^{(i)}\frac{x}{\eta}}^2
            -
        	\frac{\lambda_1^2\sqrt{\alpha}}{2\lambda_2}\autoprod{e_1,\mathbf{U}^{(i)}\frac{x}{\eta}}
        	+
        	\frac{\lambda_1^2\alpha}{2n\lambda_2}\Gamma_d^n\autopar{\frac{x}{\eta}}
        	-
        	\frac{\lambda_1^2\alpha}{4\lambda_2}\autopar{\frac{u^{(i)}_{d+1}}{\eta}}^2+
        	\frac{\lambda_1^2\sqrt{\alpha}}{4\lambda_2}\Bigg]\\
        	=\ &
        	\frac{1}{n}\sum_{i=1}^{n}\autopar{\eta^2\Phi_d\autopar{\frac{1}{\eta}\mathbf{U}^{(i)}x}}
        	,
        \end{split}
    \end{equation}
    where the last equality directly applies the conclusion in Lemma \ref{lm:primal_hard_instance_FS_AS}, which concludes the proof.
\end{proof}

\subsubsection{Proof of Theorem \ref{THM:LB_NCSC_FS_AS}}
\label{sec:Apdx_THM_LB_NCSC_FS_AS}

Recall that the complexity for averaged smooth finite-sum nonconvex-strongly-concave problems is defined as

\begin{equation}
    \begin{split}
        \mathrm{Compl}_\epsilon\autopar{\mathcal{F}_{\mathrm{NCSC}}^{L,\mu,\Delta},\mathcal{A},\mathbb{O}_{\mathrm{IFO}}^{L,\mathrm{AS}}}
    	\triangleq\ &
        \underset{f\in\mathcal{F}_{\mathrm{NCSC}}^{L,\mu,\Delta}}{\sup}\ 
    	\underset{\mathtt{A}\in{\mathcal{A}\autopar{\mathbb{O}_{\mathrm{IFO}}^{L,\mathrm{AS}}}}}{\inf}\
    	\mathbb{E}\ T_{\epsilon}(f,\mathtt{A})\\
    	=\ &
    	\underset{f\in\mathcal{F}_{\mathrm{NCSC}}^{L,\mu,\Delta}}{\sup}\
    	\underset{\mathtt{A}\in{\mathcal{A}\autopar{\mathbb{O}_{\mathrm{IFO}}^{L,\mathrm{AS}}}}}{\inf}\
    	\mathbb{E}\ \inf
    	\autobigpar{T\in\mathbb{N}\ \Big|\ \autonorm{\nabla \Phi\autopar{x^T}}\leq\epsilon}.
    \end{split}
\end{equation}
Based on the discussion of the properties of the hard instance, we come to the final statement and proof of the theorem. 

\begin{theorem}[Lower Bound for Finite-Sum AS NC-SC, Restate Theorem \ref{THM:LB_NCSC_FS_AS}] 
For any linear-span first-order algorithm $ \mathtt{A}\in\mathcal{A} $, and parameters $ L,\mu,\Delta>0 $ with a desired accuracy $ \epsilon>0 $, for the following function $ f:\mathbb{R}^{(d+1)}\times\mathbb{R}^{(d+1)}\rightarrow\mathbb{R} $:
	\begin{equation}
		f_i(x,y)=\eta^2\bar{f}_i\left(\frac{x}{\eta},\frac{y}{\eta}\right),
		\quad
		f(x,y)=\frac{1}{n}\sum_{i=1}^{n}f_i(x,y)
	\end{equation}
	where $\bar{f}_i$ is defined as \eqref{eq:LB_hard_instance_FS} and $\autobigpar{\mathbf{U}^{(i)}}_{i=1}^n\in\mathrm{\mathbf{Orth}}\autopar{d+1,(d+1)n,n}$ is defined in \eqref{eq:LB_FS_Orthogonal_Matrix}, with its primal function $ \Phi(x)\triangleq\max_{y\in\mathbb{R}^{d+1}}f(x,y) $, for small enough $ \epsilon>0 $ satisfying	
	\begin{equation}
	    \epsilon^2\leq\min\autopar{
    	\frac{\sqrt{\alpha}L^2\Delta}{76800n\mu}, \frac{\alpha L^2\Delta}{1280n\mu}, \frac{L^2\Delta}{\mu}
    	},
	\end{equation}
	if we set $L\geq 2n\mu>0$ and
	\begin{equation}
		\lambda^*=\autopar{\sqrt{\frac{n}{40}}L,\frac{n\mu}{2}}, \quad
		\eta=\frac{160\sqrt{2n}\mu}{L^2}\alpha^{-\frac{3}{4}}\epsilon, \quad
		\alpha=\frac{n\mu}{50L}, \quad
		d=\left\lfloor\frac{\sqrt{\alpha}L^2\Delta}{25600n\mu}\epsilon^{-2}\right\rfloor\geq 3,
	\end{equation}
	we have 
	\begin{itemize}
		\item The function $ f\in\mathcal{F}_{\mathrm{NCSC}}^{L,\mu,\Delta} $, $ \{f_i\}_{i=1}^n $ is $ L $-averaged smooth.
		\item In the worst case, the algorithm $ \mathcal{A} $ requires at least $\Omega\autopar{n+\sqrt{n\kappa} \Delta L \epsilon^{-2}}$ IFO calls to attain a point $ \hat{x}\in\mathbb{R}^{d+1} $ such that $ \mathbb{E}\|\nabla\Phi(\hat{x})\|\leq\epsilon $, i.e.,
		\begin{equation}
		    \mathrm{Compl}_\epsilon\autopar{\mathcal{F}_{\mathrm{NCSC}}^{L,\mu,\Delta},\mathcal{A},\mathbb{O}_{\mathrm{IFO}}^{L,\mathrm{AS}}}=\Omega\autopar{n+\sqrt{n\kappa} \Delta L \epsilon^{-2}}.
		\end{equation}
	\end{itemize}
\end{theorem}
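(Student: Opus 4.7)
My strategy combines the scaled finite-sum hard instance engineered in \eqref{eq:LB_hard_instance_FS}--\eqref{eq:LB_FS_nonconvergence} with a per-block activation count that holds for every realization of the algorithm's randomness, so the same bound transfers to the expected-complexity definition in \eqref{eq:defn_LB_FS}. The first step is routine bookkeeping: Lemma~\ref{lm:properties_hard_instance_FS_AS}(iii) together with the averaged-smooth case of Lemma~\ref{lm:scaling} certifies that $\{f_i\}$ is $L$-averaged smooth and $f$ is $\mu$-strongly concave in $y$ for the chosen $\lambda^*,\alpha$. By Lemma~\ref{lm:Primal_Scaled_FS_AS_Hard_Instance} we have $\Phi(x)=\eta^2\bar\Phi(x/\eta)$, so combining with \eqref{eq:hard_instance_FS_initial_gap} gives
$$\Phi(0)-\inf_x\Phi(x)\leq \eta^2\cdot\frac{\lambda_1^2}{2\lambda_2}\left(\tfrac{\sqrt{\alpha}}{2}+10\alpha d\right),$$
and a direct substitution of the stated $\eta,\alpha,d$ under the hypothesis on $\epsilon^2$ bounds this by $\Delta$, placing $f$ in $\mathcal{F}_{\mathrm{NCSC}}^{L,\mu,\Delta}$.

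Second, I transport the non-stationarity lower bound through the rescaling. Since $\nabla\Phi(x)=\eta\,\nabla\bar\Phi(x/\eta)$, inequality~\eqref{eq:LB_FS_nonconvergence} implies that whenever the index set $\mathcal{I}\triangleq\{i:u^{(i)}_d=u^{(i)}_{d+1}=0\}$ with $u^{(i)}=\mathbf{U}^{(i)}(x/\eta)$ satisfies $|\mathcal{I}|>n/2$, one has $\|\nabla\Phi(x)\|\geq \eta\lambda_1^2\alpha^{3/4}/(\sqrt{128n}\,\lambda_2)$, which a short calculation shows equals exactly $\epsilon$ under the chosen $\eta,\lambda^*,\alpha$. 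Consequently, any $\epsilon$-stationary iterate must correspond to at least $n/2$ indices $i$ for which $u^{(i)}_d$ or $u^{(i)}_{d+1}$ has been activated.

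The crux---and main technical obstacle---is showing that activations localize to the queried block despite the shared regularizer $\Gamma_d^n(x)$. Examining $\nabla f_i$, its $x$-component splits into an $H_d$-piece lying in $\mathrm{Range}((\mathbf{U}^{(i)})^\top)$ and obeying the alternating zero-chain from Lemma~\ref{LM:NCSC_LB_F_D}(iii) \emph{restricted to block $i$}, plus the coupling term proportional to $\nabla\Gamma_d^n(x)$. Because $\Gamma'(0)=0$, the coupling term has support contained in the already-activated coordinates of $x$ and never opens a new one; together with the orthogonality $\mathbf{U}^{(i)}(\mathbf{U}^{(j)})^\top=0$ for $j\neq i$ (and the $\mathbf{V}^{(j)}$-analog), this forces each IFO query at index $i$ to activate at most one new coordinate of $(\mathbf{U}^{(i)}x,\mathbf{V}^{(i)}y)$ and none elsewhere. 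By the alternating structure, activating $u^{(i)}_d$ therefore requires at least $2d-1$ IFO queries to index $i$.

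Finally I count. For every realization of the algorithm's (possibly randomized) sequence of component choices, let $T^\ast$ be its stopping time. Step~2 requires at least $n/2$ indices to individually accumulate $2d-1$ queries, so $T^\ast\geq(n/2)(2d-1)$ holds deterministically and thus $\mathbb{E}[T^\ast]\geq(n/2)(2d-1)$. Substituting $d=\lfloor\sqrt{\alpha}L^2\Delta/(25600n\mu\epsilon^2)\rfloor$ with $\sqrt{\alpha}=\Theta(\sqrt{n/\kappa})$ yields $\mathbb{E}[T^\ast]=\Omega(\sqrt{n\kappa}\Delta L\epsilon^{-2})$; the hypothesis $\epsilon^2\leq\sqrt{\alpha}L^2\Delta/(76800n\mu)$ forces $d\geq 3$ so that $(n/2)(2d-1)\geq 5n/2=\Omega(n)$, and combining the two contributions produces the claimed $\Omega(n+\sqrt{n\kappa}\Delta L\epsilon^{-2})$ lower bound.
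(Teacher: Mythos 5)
Your proposal is correct, and its core — the scaled instance from \eqref{eq:LB_hard_instance_FS}, the verification via Lemmas \ref{lm:properties_hard_instance_FS_AS}, \ref{lm:scaling} and \ref{lm:Primal_Scaled_FS_AS_Hard_Instance}, the non-stationarity bound \eqref{eq:LB_FS_nonconvergence} calibrated so the threshold is exactly $\epsilon$, and the count of $\tfrac{n}{2}(2d-1)$ block-local alternating activations — is the same as the paper's Case~2. You deviate in two places, both legitimately. First, the paper obtains the additive $\Omega(n)$ term from an entirely separate hard instance (its Case~1: $h_i(x,y)=\theta\langle v_i,x\rangle+L\langle x,y\rangle-\tfrac{\mu}{2}\|y\|^2$ with disjointly supported $v_i$), then takes the max of the two cases; you instead extract $\Omega(n)$ from the same instance via $d\geq 3\Rightarrow\tfrac{n}{2}(2d-1)\geq\tfrac{5n}{2}$. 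Under the theorem's stated hypotheses (which force $d\geq 3$) your shortcut is valid and more economical; the paper's two-instance decomposition is the more robust route if one wants the $\Omega(n)$ term to survive in regimes where the dimension bound degenerates. Second, for randomized algorithms the paper appeals to Yao's minimax principle, whereas you observe that the activation count holds pathwise for every realization of the index sequence, so the bound on $\mathbb{E}\,T_\epsilon$ is immediate; your argument is the cleaner formalization and is implicitly what the paper's "for any fixed index sequence" remark is doing. Your treatment of the shared coupling term $\nabla\Gamma_d^n$ via $\Gamma'(0)=0$ and the orthogonality of the $\mathbf{U}^{(i)},\mathbf{V}^{(i)}$ correctly fills in the per-block zero-chain claim that the paper states only briefly.
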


\begin{proof}
    We divide our proof into two cases.
    
    \paragraph{Case 1}
	The first case builds an $\Omega(n)$ lower bound from a special case of NC-SC function. Consider the following function: $x,y\in\mathbb{R}^d$ and
    \begin{equation}
        \label{eq:NCSC_FS_hard_instance_Case_1}
        h_i(x,y)\triangleq \theta\autoprod{v_i,x}+L\autoprod{x,y}-\frac{\mu}{2}\|y\|^2,
        \quad
        h(x,y)\triangleq\frac{1}{n}\sum_{i=1}^n h_i(x,y),
    \end{equation}
    where $\theta\leq\sqrt{\frac{2L^2n^2\Delta}{\mu d}}$, $0<\mu\leq L$,  the dimension number $d$ is set as a multiple of $n$, and $v_i\in\mathbb{R}^d$ is defined as 
    \begin{equation}
        v_i\triangleq
        \begin{bmatrix}
            0 & \cdots & 0 & 1 & \cdots & 1 & 0 & \cdots & 0
        \end{bmatrix}^\top,
    \end{equation}
    such that elements with indices from $\frac{i-1}{n}d+1$ to $\frac{i}{n}d$ are 1 and the others are all $0$, namely,  there are $\frac{d}{n}$ non-zero elements. 
    
    It is easy to see that the function $h_i$ is $\mu$-strongly convex and $L$-smooth in both $x$ and $y$. For the initial value gap, denote $\varphi\triangleq\max_y h$. We have
    \begin{equation}
        \varphi(x)
        =
        \frac{1}{n}\sum_{i=1}^n\left(\theta\autoprod{v_i,x}+\frac{L^2}{2\mu}\|x\|^2\right)
        =
        \frac{L^2}{2\mu}\|x\|^2+\frac{\theta}{n}\sum_{i=1}^n\autoprod{v_i,x},
    \end{equation}
    which is a strongly convex function, and its optimal point $x^*$ is 
    \begin{equation}
        x^*=-\frac{\mu\theta}{L^2n}\sum_{i=1}^n v_i,
        \quad
        \varphi^*=-\frac{\mu\theta^2}{2L^2n^2}\left\|\sum_{i=1}^n v_i\right\|^2.
    \end{equation}
    % so we have 
    Based on the setting of $\theta$,
    \begin{equation}
        \varphi(0)-\varphi^*=\frac{\mu\theta^2}{2L^2n^2}\left\|\sum_{i=1}^n v_i\right\|^2
        =
        \frac{\mu\theta^2d}{2L^2n^2}
        \leq
        \Delta.
    \end{equation}
    Hence,  we have $h\in\mathcal{F}_{\mathrm{NCSC}}^{L,\mu,\Delta}$. Then based on the expression of $\nabla_x h_i$ and $\nabla_y h_i$, we have that, starting from $(x,y)=(0,0)$ and denoting $\{i_1, i_2, \cdots, i_t\}$ as the index of IFO sequence for $t$ queries, then the output $(\hat{x}_t,\hat{y}_t)$ will be
    \begin{equation}
        \hat{x}_t,\hat{y}_t\in\mathrm{Span}\{v_{i_1}, v_{i_2}, \cdots, v_{i_t}\}.
    \end{equation}
    then note that each $v_i$ contains only $\frac{d}{n}$ non-zero elements, by the expression of the gradient of the primal function $\nabla \varphi$, we have that if $t\leq n/2$, then there must be at least $\frac{n}{2}\times\frac{d}{n}=\frac{d}{2}$ zero elements in $\hat{x}_t$, which implies that for $\epsilon^2\leq\frac{L^2\Delta}{\mu}$,
    \begin{equation}
        \|\nabla \varphi(\hat{x}_t)\|
        =
        \left\|\frac{L^2}{\mu}\hat{x}_t+\frac{\theta}{n}\sum_{i=1}^n v_i\right\|
        \geq
        \frac{\theta}{n}\sqrt{\frac{d}{2}}
        \geq
        \epsilon,
    \end{equation}
    where we follow the setting of $\theta$ above. So we proved that it requires $\Omega(n)$ IFO calls to find an $\epsilon$-stationary point.
    
    \paragraph{Case 2}
    The second case provides an $\Omega(\sqrt{n\kappa}\Delta L\epsilon^{-2})$ lower bound concerning the second term in the result. Throughout the case, we assume $L\geq 2n\mu>0$ as that in Lemma \ref{lm:LB_FS_Hard_Instance_bar_f_base_AS}. 
	
	Here we still use the hard instance constructed in \eqref{eq:LB_hard_instance_FS}, note that $ \nabla f_i(x,y)=\eta\nabla \bar{f}_i\autopar{\frac{x}{\eta},\frac{y}{\eta}} $ is a scaled version of $\bar{f}_i$, which is $L$-averaged smooth by Lemma \ref{lm:LB_FS_Hard_Instance_bar_f_base_AS}, so by Lemma \ref{lm:scaling} we have $ \{f_i\}_i $ is also $ L $-average smooth. The for the strong concavity, note that $\bar{f}$ is $ \mu $-strongly concave on $ y $, so as the scaled version, $f$ is also $ \mu $-strongly concave on $ y $. 
	
	Then for the primal function of $f$, let $ \Phi(x)\triangleq\max_y f(x,y) $, by Lemma \ref{lm:primal_hard_instance_FS_AS} and Lemma \ref{lm:Primal_Scaled_FS_AS_Hard_Instance}, we have 
	\begin{equation}
	    \Phi(x)
	    =
	    \eta^2\bar{\Phi}\autopar{\frac{x}{\eta}}
	    =
	    \frac{1}{n}\sum_{i=1}^{n}\eta^2\bar{\Phi}_i\autopar{\frac{x}{\eta}},
	\end{equation}
	where $\bar{\Phi}$ and $\bar{\Phi}_i$ follow the definition in Lemma \ref{lm:primal_hard_instance_FS_AS},
	
	%Then first we use the nonconvergence argument, 
	We first justify the lower bound argument by lower bounding the norm of the gradient. Recall the definition of $\mathcal{I}$ (see \eqref{eq:LB_FS_nonconvergence}), which is the index set such that $ u^{(i)}_d=u^{(i)}_{d+1}=0,\ \forall i\in\mathcal{I} $ while $ u^{(i)}=\mathbf{U}^{(i)}x $. By substituting the parameters in the statement above into \eqref{eq:LB_FS_nonconvergence} and Lemma \ref{LM:NCSC_LB_PHI}, we have that when the size of the set $\mathcal{I}$, i.e., $ |\mathcal{I}|>n/2 $ (note that scaling does not affect the activation status),
	\begin{equation}
		\begin{split}
		    \|\nabla \Phi(x)\|^2
    		=\ &
    		\autonorm{\eta\nabla \bar{\Phi}\autopar{\frac{x}{\eta}}}^2
    		=
    		\eta^2\autonorm{\nabla \bar{\Phi}\autopar{\frac{x}{\eta}}}^2\\
    		\geq\ &
    		\frac{51200n\mu^2}{L^4}\alpha^{-\frac{3}{2}}\epsilon^2\cdot \frac{\lambda_1^4}{128n\lambda_2^2}\alpha^{\frac{3}{2}}\\
    		=\ &
    		\frac{51200n\mu^2}{L^4}\alpha^{-\frac{3}{2}}\epsilon^2\cdot \frac{L^4}{51200n\mu^2}\alpha^{\frac{3}{2}}
    		=
    		\epsilon^2.
		\end{split}
	\end{equation}
	
	Next, we upper bound the starting optimality gap. By substitution of parameter settings and the initial gap of $\bar{\Phi}$ in \eqref{eq:hard_instance_FS_initial_gap}, also recall the setting of $\epsilon$, we have
	\begin{equation}
		\begin{split}
			\Phi(0)-\Phi^*
			=\ &
			\eta^2\left(\bar{\Phi}(0)-\inf_{x\in\mathbb{R}^{d+1}}\bar{\Phi}(x)\right)
			=
			\frac{51200n\mu^2}{L^4}\alpha^{-\frac{3}{2}}\epsilon^2\cdot\frac{nL^2}{40n\mu}\left(\frac{\sqrt{\alpha}}{2}+10\alpha d\right)\\
			=\ &
			\frac{1280n\mu}{L^2}\left(\frac{1}{2\alpha}+\frac{10d}{\sqrt{\alpha}}\right)\epsilon^2
			=
			\frac{640n\mu\epsilon^2}{\alpha L^2}+\frac{12800n\mu d\epsilon^2}{L^2\sqrt{\alpha}}\\
			\leq\ &
			\frac{640n\mu}{\alpha L^2}\cdot\frac{\alpha L^2\Delta}{1280n\mu}+\frac{12800n\mu\epsilon^2}{L^2\sqrt{\alpha}}\cdot\frac{\sqrt{\alpha}L^2\Delta}{25600n\mu}\epsilon^{-2}\\
			\leq\ &
			\frac{\Delta}{2}+\frac{\Delta}{2}
			= \Delta,
		\end{split}
	\end{equation}
	so we conclude that $ f\in\mathcal{F}_{\mathrm{NCSC}}^{L,\mu,\Delta} $, i.e. the function class requirement is satisfied.
	
	To show the lower bound, by previous analysis and the choice of \eqref{eq:LB_FS_Orthogonal_Matrix}, the activation process for each component will also mimic the "alternating zero-chain" mechanism (see Lemma \ref{LM:NCSC_LB_F_D}) independently. So we have, by the lower bound argument \eqref{eq:LB_FS_nonconvergence}, it requires to activate at least half of the components through until their $d$-th elements (or at least half of $\{u^{(i)}\}_i$ are not activated through until the $d$-th element, note that each $u^{(i)}$ corresponds to an unique part of $x$ with length $(d+1)$) for the primal stationarity convergence of the objective function, which takes (note that $2\lfloor x\rfloor-1\geq x$ when $x\geq 3$)
	\begin{equation}
		T=\frac{n}{2}(2d-1)
		\geq
		\frac{n}{2}\cdot\frac{\sqrt{\alpha}L^2\Delta}{25600n\mu}\epsilon^{-2}
		=
		\Omega\autopar{\sqrt{\alpha}\Delta L\kappa\epsilon^{-2}}
		=
		\Omega\autopar{\sqrt{n \kappa}\Delta L\epsilon^{-2}}
	\end{equation}
	IFO oracle queries. So we found that for any fixed index sequence $\{i_t\}_{t=1}^T$, the output $ z^{T+1} $ from a randomized algorithm\footnote{Note that randomization does not affect the lower bound, as long as the algorithm satisfies the linear-span assumption.} must not be an approximate stationary point, which verifies the $\Omega\autopar{n\vee \sqrt{n\kappa}\Delta L\epsilon^{-2}}$ or $\Omega\autopar{n+\sqrt{n\kappa}\Delta L\epsilon^{-2}}$ lower bound by combining the two cases discussed above together. We conclude the proof by applying Yao's minimax theorem \citep{yao1977probabilistic}, the lower bound will also hold for a randomized index sequence incurred by IFOs.
\end{proof}

\section{Proof of NC-SC Catalyst}

\subsection{Outer-loop Complexity}
In this section, we first introduce a few useful definitions. The Moreau envelop of a function $F$ with a positive parameter $\lambda>0$ is: 
$$
    F_{\lambda}(x) = \min_{z\in\mathbb{R}^{d_1}}  F(z) + \frac{1}{2\lambda}\Vert z - x \Vert^2.$$
We also define the proximal point of $x$:
$$\operatorname{prox}_{\lambda F}(x)=\argmin_{z\in\mathbb{R}^{d_1}}\left\{F(z)+\frac{1}{2 \lambda}\|z-x\|^{2}\right\}.$$
When $F$ is differentiable and $\ell$-weakly convex, for $\lambda \in(0,1 / \ell)$ we have
\begin{equation}
    \nabla F(\prox_{\lambda F} (x)) = \nabla  F_\lambda(x) = \lambda^{-1}(x - \prox_{\lambda F}(x)).
\end{equation}
Thus a small gradient $\|\nabla F_\lambda (x)\|$ implies that $x$ is near a point $\prox_{\lambda F}(x)$ that is nearly stationary for $F$. Therefore $\|\nabla F_\lambda(x)\|$ is also commonly used as a measure of stationarity. We refer readers to \citep{drusvyatskiy2019efficiency} for more discussion on Moreau envelop.

In this subsection, we use $(x^t, y^t)$ as a shorthand for $(x^t_0, y^t_0)$. We will denote $(\hat{x}^t, \hat{y}^t)$ as the optimal solution to the auxiliary problem (\ref{auxiliary prob ncc}) at $t$-th iteration: $\min_{x\in \mathbb{R}^{d_1}}\max_{y\in \mathbb{R}^{d_2}} \left[\hat{f}_{t}(x,y)\triangleq  f(x,y) + L\Vert x - x^t\Vert^2 \right]$. It is easy to observe that $\hat{x}^t = \prox_{\Phi/2L}(x^t)$. Define $\hat{\Phi}_t(x) = \max_y f(x,y) + L\|x-x^t\|^2 $. In the following theorem, we show the convergence of the Moreau envelop $\|\nabla\Phi_{1/2L}(x)\|^2$ when we replace the inexactness measure (\ref{ncc criteion}) by another inexactness measure $\gap_{\hat{f}_{t}}(x^{t+1}, y^{t+1})\leq \beta_t(\|x^t-\hat{x}^t\|+\|y^t-\hat{y}^t\|^2)$. Later we will show this inexactness measure can be implied by (\ref{ncc criteion}) with our choice of $\beta_t$ and $\alpha_t$.

\begin{theorem} \label{ncc moreau complexity}
Suppose function $f$ is NC-SC with strong convexity parameter $\mu$ and L-Lipschitz smooth. If we replace the stopping criterion (\ref{ncc criteion}) by $\gap_{\hat{f}_{t}}(x^{t+1}, y^{t+1})\leq \beta_t(\|x^t-\hat{x}^t\|^2+\|y^t-\hat{y}^t\|^2)$ with $\beta_t = \frac{\mu^4}{28L^3}$ for $t>0$ and $\beta_0 = \frac{\mu^4}{32L^4\max\{1,L\}}$, then iterates from Algorithm \ref{catalyst ncc 1} satisfy
\begin{align} \label{ncsc moreau sum}
    \sum_{t=0}^{T-1} \|\nabla\Phi_{1/2L}(x^t)\|^2 \leq 
    \frac{87L}{5}\Delta_0 + \frac{7L}{5}D_y^0,
\end{align}
where $D_y^0 = \|y^0 - y^*(x^0)\|^2$ and $\Delta_0 = \Phi(x^0) -\inf_{x}\Phi(x) $. 
\end{theorem}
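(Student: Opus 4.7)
The plan is to exploit three structural features of Algorithm~\ref{catalyst ncc 1}: (i) since $\hat{f}_t(\cdot,y)$ is $L$-strongly convex uniformly in $y$, $\hat{\Phi}_t$ is $L$-strongly convex with minimizer $\hat{x}^t = \prox_{\Phi/(2L)}(x^t)$, so $\|\nabla\Phi_{1/2L}(x^t)\|^2 = 4L^2\|x^t-\hat{x}^t\|^2$; (ii) the penalty $L\|x-x^t\|^2$ does not involve $y$, so $\hat{y}^t = y^*(\hat{x}^t)$; (iii) the gap-based stopping rule $\gap_{\hat{f}_t}(x^{t+1},y^{t+1}) \leq \beta_t(\|x^t-\hat{x}^t\|^2+\|y^t-\hat{y}^t\|^2)$ can be converted into both a functional descent on $\Phi$ and a fast contraction for $D_y^t = \|y^t-y^*(x^t)\|^2$. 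Combining these into a Lyapunov potential allows $\sum_t \|x^t-\hat{x}^t\|^2$ to be telescoped, and \eqref{ncsc moreau sum} then follows after multiplying by $4L^2$.

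\textbf{One-step descent.} By $L$-strong convexity of $\hat{\Phi}_t$ at its minimizer, together with $\hat{\Phi}_t(x^t)=\Phi(x^t)$, we have $\hat{\Phi}_t(\hat{x}^t) \leq \Phi(x^t) - \tfrac{L}{2}\|x^t-\hat{x}^t\|^2$. Writing $\gap_t\triangleq\gap_{\hat{f}_t}(x^{t+1},y^{t+1})$, the definition of the gap gives $\hat{\Phi}_t(x^{t+1}) - \min_x\hat{f}_t(x,y^{t+1})\leq\gap_t$, and since $\min_x\hat{f}_t(x,y^{t+1})\leq\hat{f}_t(\hat{x}^t,\hat{y}^t)=\hat{\Phi}_t(\hat{x}^t)$ while $\Phi(x^{t+1})\leq\hat{\Phi}_t(x^{t+1})$, we obtain $\Phi(x^{t+1})\leq\Phi(x^t)-\tfrac{L}{2}\|x^t-\hat{x}^t\|^2+\gap_t$. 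Applying the stopping-criterion bound on $\gap_t$ together with $\|y^t-\hat{y}^t\|^2\leq 2D_y^t+2\kappa^2\|x^t-\hat{x}^t\|^2$ (which follows from $\hat{y}^t = y^*(\hat{x}^t)$ and the $\kappa$-Lipschitz continuity of $y^*$ from Lemma~\ref{lin's lemma}) yields
\[
\Phi(x^{t+1}) \leq \Phi(x^t) - \bigl[\tfrac{L}{2} - \beta_t(1+2\kappa^2)\bigr]\|x^t-\hat{x}^t\|^2 + 2\beta_t D_y^t.
\]

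\textbf{Dual recursion.} Because the inner maximizer in $y$ of $\hat{f}_t$ coincides with $y^*$, $\mu$-strong concavity of $\hat{f}_t(x^{t+1},\cdot)$ gives $\tfrac{\mu}{2}\|y^{t+1}-y^*(x^{t+1})\|^2 \leq \gap_t$, so
\[
D_y^{t+1} \leq \tfrac{2\beta_t}{\mu}\bigl[(1+2\kappa^2)\|x^t-\hat{x}^t\|^2 + 2 D_y^t\bigr].
\]

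\textbf{Telescoping and bookkeeping.} With $\beta_t=\mu^4/(28L^3)$ we have $\beta_t(1+2\kappa^2)\ll L$ and $\beta_t/\mu\ll 1$, so the potential $V_t \triangleq \Phi(x^t) - \inf\Phi + c\,D_y^t$ for a suitable $O(1)$ constant $c$ satisfies $V_{t+1}\leq V_t - \Theta(L)\|x^t-\hat{x}^t\|^2$ for $t\geq 1$, using the $D_y$ recursion to absorb the $2\beta_t D_y^t$ error into the $-c D_y^t$ discount. The still smaller $\beta_0=\mu^4/(32L^4\max\{1,L\})$ is tuned so that a single outer iteration already forces $D_y^1$ to be dominated by a tiny multiple of $\Delta_0$ plus a negligible multiple of $D_y^0$; separately, the $t=0$ inequality yields $\|x^0-\hat{x}^0\|^2\lesssim \Delta_0/L+(\beta_0/L)D_y^0$. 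Summing the $t=0$ bound with the Lyapunov telescoping for $t\geq 1$ and multiplying by $4L^2$ produces \eqref{ncsc moreau sum}. The principal obstacle is constant tuning: a naively symmetric Lyapunov (e.g.\ taking $c\propto L$) would put an $O(L^2)$ weight on $D_y^0$, so the two-scale splitting $\beta_0\ll\beta_t$ paired with a \emph{constant} $c$ is what delivers the advertised $O(L)$ coefficient $\tfrac{7L}{5}$ on $D_y^0$.
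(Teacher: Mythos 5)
Your first two steps are correct and track the paper's argument closely: the one-step descent $\Phi(x^{t+1})\le\Phi(x^t)-\tfrac{L}{2}\|x^t-\hat x^t\|^2+\gap_{\hat f_t}(x^{t+1},y^{t+1})$ is the paper's inequality (\ref{bound x to prox 3}) rearranged, and your dual recursion $D_y^{t+1}\le\tfrac{2}{\mu}\gap_{\hat f_t}(x^{t+1},y^{t+1})$ combined with $\|y^t-\hat y^t\|^2\le 2D_y^t+2\kappa^2\|x^t-\hat x^t\|^2$ is an equivalent repackaging of the paper's recursion on $b_t=\gap_{\hat f_{t-1}}(x^t,y^t)$ (the paper routes the warm-start error through $\|y^t-\hat y^{t-1}\|^2\le\tfrac{2}{\mu}b_t$ rather than through $D_y^t$, but $D_y^t\le\tfrac{2}{\mu}b_t$, so the two are interchangeable).

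The gap is in your final bookkeeping step. A Lyapunov function $V_t=\Phi(x^t)-\inf\Phi+c\,D_y^t$ with a single time-uniform weight $c$ cannot deliver the stated coefficient $\tfrac{7L}{5}$ on $D_y^0$. To absorb the term $2\beta_t D_y^t$ in the descent for $t\ge 1$ you need $2\beta_t\left(1+\tfrac{2c}{\mu}\right)\le c$, hence $c\ge 2\beta_t=\tfrac{\mu^4}{14L^3}$ (in particular a genuinely $O(1)$ constant $c$ already fails once $\mu$ is large, e.g.\ $\mu=L>14$). Taking the minimal admissible $c\approx\tfrac{\mu^4}{12L^3}$ and the resulting descent modulus $\gamma\ge\tfrac{3L}{8}$, the telescoped bound is $\tfrac{4L^2}{\gamma}(\Delta_0+c\,D_y^0)$, whose $D_y^0$-coefficient is $\approx\tfrac{8\mu^4}{9L^2}$; for $\mu=L=10$ this is about $89$, versus the claimed $\tfrac{7L}{5}=14$. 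Your closing explanation is also off on two counts: in a uniform Lyapunov the weight seen by $D_y^0$ is dictated by the larger $\beta_{t\ge 1}$, not by $\beta_0$, so the "two-scale splitting" does not help; and $\beta_0\ll\beta_t$ is in fact false for $L<7/8$. The repair — and what the paper actually does — is to unroll rather than uniformly weight: for $t\ge 1$ one has $\gap_{\hat f_t}(x^{t+1},y^{t+1})\le\tfrac{4\beta_t}{\mu}\gap_{\hat f_{t-1}}(x^t,y^t)+\beta_t(1+2\kappa^2)\|x^t-\hat x^t\|^2$, so after summing the geometric series $D_y^0$ enters only through the $t=0$ gap and is multiplied only by the tiny $\beta_0$, giving a coefficient of order $L\beta_0\le\tfrac{7\mu}{9}\le\tfrac{7L}{5}$. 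With that change (plus the direct bound $\|x^0-\hat x^0\|^2\le\tfrac{2\Delta_0}{L}$ for the $t=0$ term, since $\tfrac{L}{2}-\beta_0(1+2\kappa^2)$ can be negative when $L$ is small), your decomposition does yield the theorem.
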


\begin{proof}
Define $b_{t+1} = \gap_{\hat{f_t}}(x^{t+1}, y^{t+1})$. %From the stopping criterion of solving subproblem in Algorithm \ref{catalyst ncc} and Lemma \ref{criterion relation}, we have 
%\begin{equation}
%    b_{t+1} \leq 
%\end{equation}
By Lemma 4.3 in \citep{drusvyatskiy2019efficiency}, 
\begin{align} \nonumber
    \Vert \nabla \Phi_{1/2L}(x^t)\Vert^2 =  4L^2 \Vert x^t - \prox_{\Phi/2L}(x^t)\Vert^2 \leq & 8L [\hat{\Phi}_t(x^t) - \hat{\Phi}_t(\prox_{\Phi/2L}(x^t))] \\ \nonumber
    \leq & 8L [\hat{\Phi}_t(x^t) -\hat{\Phi}_t(x^{t+1}) + b_{t+1}] \\ \nonumber
    = & 8L\big\{\Phi(x^t) - \left[\Phi(x^{t+1})+L\Vert x^{t+1}-x^t\Vert^2 \right]+b_{t+1}\big\}\\  \label{bound x to prox 3}
    \leq & 8L [\Phi(x^t) - \Phi(x^{t+1}) +b_{t+1}],
\end{align}
where in the first inequality we use $L$-strongly convexity of $\hat{\Phi}_t$. Then, for $t\geq 1$
\begin{align*}
    \|y^t - \hat{y}^t\|^2 \leq& 2\|y^t - \hat{y}^{t-1}\|^2 + 2\|y^*(\hat{x}^{t-1})-y^*(\hat{x}^t)\|^2 \\
    \leq & 2\|y^t - \hat{y}^{t-1}\|^2 + 2\left(\frac{L}{\mu}\right)^2\|\hat{x}^t-\hat{x}^{t-1}\|^2\\
    \leq &  2\|y^t - \hat{y}^{t-1}\|^2 + 4\left(\frac{L}{\mu}\right)^2\|\hat{x}^t-x^t\|^2 + 4\left(\frac{L}{\mu}\right)^2\|x^t-\hat{x}^{t-1}\|^2 \\
    \leq &  \frac{8L}{\mu^2}b_{t} + 4\left(\frac{L}{\mu}\right)^2\|\hat{x}^t-x^t\|^2,
\end{align*}
where we use Lemma \ref{lin's lemma} in the second inequality, and $(L,\mu)$-SC-SC of $\Tilde{f}_{t-1}(x,y)$ and Lemma \ref{criterion relation} in the last inequality. Therefore, 
\begin{equation} \label{ncc dist dif}
    \|x^t - \hat{x}^t\|^2 + \|y^t - \hat{y}^t\|^2 \leq \frac{8L}{\mu^2}b_{t} + \left(\frac{4L^2}{\mu^2} + 1\right)\|\hat{x}^t-x^t\|^2.
\end{equation}
%Since the auxiliary problem is $(L,\mu)$-SC-SC and $3L$-smooth, by Lemma \ref{criterion relation},
By our stopping criterion and $ \Vert \nabla \Phi_{1/2L}(x^t)\Vert^2 =  4L^2 \Vert x^t - \hat{x}^t\Vert^2$, for $t\geq 1$
\begin{equation*}
    b_{t+1} \leq \beta_t \left[ \|x_{t} - \hat{x}^t\|^2 + \|y_{t} - \hat{y}^t\|^2 \right] \leq \frac{8L \beta_t}{\mu^2}b_{t} + \beta_t\left(\frac{1}{\mu^2} + \frac{1}{4L^2}\right)\Vert \nabla \Phi_{1/2L}(x^t)\Vert^2.
\end{equation*}
%where we use stopping criterion of the auxiliary problem (\ref{auxiliary prob ncc}) in the second inequality. Combining with (\ref{ncc dist dif}) and noting that $\|\hat{x}^t-x^t\| = \frac{1}{4L^2}\|\nabla \Phi_{1/2L}(x^t)\|$, for $t\geq 1$
%\begin{equation}
%    b_{t+1}  \leq \frac{72L^3 \alpha_t}{\mu^3}b_{t} + \frac{9L^2\alpha_t}{\mu}\left(\frac{1}{\mu^2} + \frac{1}{4L^2}\right)\Vert \nabla \Phi_{1/2L}(x^t)\Vert^2.
%\end{equation}
Define $\theta = \frac{2}{7}$ and $w = \frac{5\mu^2}{112L^3}$. It is easy to verify that as $\beta_t = \frac{\mu^4}{28L^3}$, then $\frac{8L \beta_t}{\mu^2}\leq \theta$ and $\beta_t\left(\frac{1}{\mu^2} + \frac{1}{4L^2}\right)\leq w$. We conclude the following recursive bound
%So we can write 
\begin{equation}\label{eqn:rec_outer_loop}
b_{t+1} \leq \theta b_t + w\Vert \nabla \Phi_{1/2L}(x^t)\Vert^2.    
\end{equation} 
For $t=0$,
\begin{equation}  \label{ncc y0 bound}
    \|y^0-\hat{y}^0\|^2\leq 2\|y^0-y^*(x^0)\|^2 + 2\|\hat{y}^0-y^*(x^0)\|^2\leq 2\|y^0-y^*(x^0)\|^2  + 2\left(\frac{L}{\mu}\right)^2\|x^0-\hat{x}^0\|^2.
\end{equation}
Because $\Phi(x)+L\|x-x^0\|^2$ is $L$-strongly convex, we have
\begin{equation*}
    \left(\Phi(\hat{x}^0) +L\|\hat{x}^0-x^0\|^2 \right) + \frac{L}{2}\|\hat{x}^0-x^0\|^2\leq \Phi(x^0) = \Phi^* + (\Phi(x^0)-\Phi^*) \leq \Phi(\hat{x}^0) + (\Phi(x^0)-\Phi^*).
\end{equation*}
This implies $\|\hat{x}^0-x^0\|^2\leq \frac{L}{2}(\Phi(x^0)-\Phi^*)$. Then combining with (\ref{ncc y0 bound})
\begin{equation*}
    \|y^0-\hat{y}^0\|^2 + \|x^0-\hat{x}^0\|^2 \leq \left(\frac{L^3}{\mu^2}+\frac{L}{2}\right)(\Phi(x^0)-\Phi^*) + 2\|y^0-y^*(x^0)\|^2.
\end{equation*}
Hence, by the stopping criterion,
\begin{equation*}
    b_1  \leq \beta_0\left(\frac{L^3}{\mu^2}+\frac{L}{2}\right)(\Phi(x^0)-\Phi^*) + 2\beta_0\|y^0-y^*(x^0)\|^2.
\end{equation*}
Define $\theta_0 = \frac{\mu^2}{16L^2}$ . With $\beta_0 = \frac{\mu^4}{32L^4\max\{1,L\}}$, $\beta_0\left(\frac{L^3}{\mu^2}+\frac{L}{2}\right)\leq \theta_0$ and $2\beta_0\leq \theta_0$. So we can write 
$$b_1 \leq \theta_0(\Phi(x^0)-\Phi^*) + \theta_0\|y^0-y^*(x^0)\|^2.$$
Unravelling \eqref{eqn:rec_outer_loop},
%Iterating updates of $\{b_t\}_t$, 
we have for $t\geq1$
\begin{align} 
    b_{t+1} \leq \theta^tb_1 + w\sum_{k=1}^t\theta^{t-k}\|\nabla\Phi_{1/2L}(x_k)\|^2\leq  \theta^{t}\theta_0(\Phi(x^0)-\Phi^*) + \theta^{t}\theta_0\|y^0-y^*(x^0)\|^2 +w\sum_{k=1}^t\theta^{t-k}\|\nabla\Phi_{1/2L}(x_k)\|^2.
\end{align}
Summing from $t=0$ to $T-1$,
\begin{align} \nonumber
    \sum_{t=0}^{T-1}b_{t+1} &= \sum_{t=1}^{T-1}b_t + b_1\\ \nonumber
    &\leq \theta_0\sum_{t=0}^{T-1}\theta^t[\Phi(x^0)-\Phi^*] +\theta_0\sum_{t=0}^{T-1}\theta^t\|y^0-y^*(x^0)\|^2 + w\sum_{t=1}^{T-1}\sum_{k=1}^t\theta^{t-k}\|\nabla\Phi_{1/2L}(x_k)\|^2 \\ \label{rewrite b_t}
    &\leq \theta_0\sum_{t=0}^{T-1}\theta^t[\Phi(x^0)-\Phi^*] +\theta_0\sum_{t=0}^{T-1}\theta^t\|y^0-y^*(x^0)\|^2 +w\sum_{t=1}^{T-1}\frac{1}{1-\theta}\|\nabla\Phi_{1/2L}(x^t)\|^2,
\end{align}
where we use $\sum_{t=1}^{T-1}\sum_{k=1}^t\theta^{t-k}\|\nabla\Phi_{1/2L}(x_k)\|^2 = \sum_{k=1}^{T-1}\sum_{t=k}^T\theta^{t-k}\|\nabla\Phi_{1/2L}(x_k)\|^2 \leq \sum_{k=1}^{T-1}\frac{1}{1-\theta}\|\nabla\Phi_{1/2L}(x_k)\|^2$.
Now, by telescoping (\ref{bound x to prox 3}),
\begin{equation*}
    \frac{1}{8L}\sum_{t=0}^{T-1}\|\nabla\Phi_{1/2L}(x^t)\|^2 \leq \Phi(x^0)-\Phi^* + \sum_{t=0}^{T-1}b_{t+1}.
\end{equation*}
Plugging (\ref{rewrite b_t}) in,
\begin{equation}
    \frac{1}{8L}\sum_{t=0}^{T-1}\|\nabla\Phi_{1/2L}(x^t)\|^2 -w\sum_{t=1}^{T-1}\frac{1}{1-\theta}\|\nabla\Phi_{1/2L}(x^t)\|^2 \leq \left(1+\frac{\theta_0}{1-\theta}\right)[\Phi(x^0)-\Phi^*]+\frac{\theta_0}{1-\theta}\|y^0-y^*(x^0)\|^2.
\end{equation}
Plugging in $w\leq \frac{5}{112L}$, $\frac{1}{1-\theta}=\frac{7}{5}$ and $\theta_0\leq \frac{1}{16}$
\begin{align*}
    \frac{1}{16L}\sum_{t=0}^{T-1}\|\nabla\Phi_{1/2L}(x^t)\|^2 \leq \frac{87}{80}[\Phi(x^0)-\Phi^*]+\frac{7}{80}\|y^0-y^*(x^0)\|^2.
\end{align*}
\end{proof}

\noindent\textbf{Proof of Theorem \ref{THM CATALYST NCSC}}

\begin{proof}
We first show that criterion (\ref{ncc criteion}) implies the criterion in Theorem \ref{ncc moreau complexity}. By Lemma \ref{criterion relation}, as $\hat{f}_t$ is $(L, \mu)$-SC-SC and $3L$-smooth,
\begin{equation*}
    2\mu \gap_{\hat{f}_{t}}(x^{t+1}, y^{t+1})\leq \|\nabla \hat{f}_t(x^{t+1}, y^{t+1})\|^2 \leq \alpha_t\|\nabla \hat{f}_t(x^t, y^t)\|^2 \leq 36L^2\alpha_t(\|x^t-\hat{x}^t\|^2+\|y^t-\hat{y}^t\|^2),
\end{equation*}
therefore, 
\begin{equation*}
    \gap_{\hat{f}_{t}}(x^{t+1}, y^{t+1}) \leq \frac{18L^2\alpha_t}{\mu}(\|x^t-\hat{x}^t\|^2+\|y^t-\hat{y}^t\|^2),
\end{equation*}
which implies $\gap_{\hat{f}_{t}}(x^{t+1}, y^{t+1})\leq \beta_t(\|x^t-\hat{x}^t\|^2+\|y^t-\hat{y}^t\|^2)$ by our choice of $\{\beta_t\}_t$ and $\{\alpha_t\}_t$. 

We still use $b_{t+1} = \gap_{\hat{f_t}}(x^{t+1}, y^{t+1})$ as in the proof of Theorem (\ref{ncc moreau complexity}). First, note that 
\begin{align} \nonumber
    \|\nabla\Phi(x^{t+1})\|^2 &\leq 2\|\nabla\Phi(x^{t+1})-\nabla \Phi(\hat{x}^t)  \|^2 + 2\|\nabla\Phi(\hat{x}^t)\|^2\\ \nonumber
    &\leq 2\left(\frac{2L^2}{\mu} \right)\|x^{t+1}-\hat{x}^t\|^2 + 2\|\nabla \Phi_{1/2L}(x^t)\|^2 \\ 
    &\leq \frac{16L^3}{\mu^2}b_{t+1} + 2\|\nabla \Phi_{1/2L}(x^t)\|^2.
\end{align}
where in the second inequality we use Lemma \ref{lin's lemma} and Lemma 4.3 in \citep{drusvyatskiy2019efficiency}. Summing from $t=0$ to $T-1$, we have
\begin{equation} \label{moreau to primal convergence}
    \sum_{t=0}^{T-1} \|\nabla\Phi(x^{t+1})\|^2 \leq \frac{16L^3}{\mu^2}\sum_{t=0}^{T-1} b_{t+1} + 2\sum_{t=0}^{T-1} \|\nabla \Phi_{1/2L}(x^t)\|^2.
\end{equation}
Applying (\ref{rewrite b_t}), we have
\begin{align*}
    \frac{16L^3}{\mu^2}\sum_{t=0}^{T-1} b_{t+1} \leq \frac{16L^3\theta_0}{\mu^2}\sum_{t=0}^{T-1}\theta^t[\Phi(x^0)-\Phi^*] +\frac{16L^3\theta_0}{\mu^2}\sum_{t=0}^{T-1}\theta^t\|y^0-y^*(x^0)\|^2 +\frac{16L^3w}{\mu^2}\sum_{t=1}^{T-1}\frac{1}{1-\theta}\|\nabla\Phi_{1/2L}(x^t)\|^2.
\end{align*}
Plugging in $\theta_0 = \frac{\mu^2}{16L^2}$, $\theta = \frac{2}{7}$ and $w = \frac{5\mu^2}{112L^3}$, 
\begin{equation*}
    \frac{16L^3}{\mu^2}\sum_{t=0}^{T-1} b_{t+1} \leq \frac{7L}{5}[\Phi(x^0)-\Phi^*] + \frac{7L}{5}\|y^0-y^*(x^0)\|^2 + \sum_{t=1}^{T-1}\|\nabla\Phi_{1/2L}(x^t)\|^2.
\end{equation*}
Plugging back into (\ref{moreau to primal convergence}),
\begin{equation*}
    \sum_{t=0}^{T-1} \|\nabla\Phi(x^{t+1})\|^2 \leq \frac{7L}{5}[\Phi(x^0)-\Phi^*] + \frac{7L}{5}\|y^0-y^*(x^0)\|^2 + 3\sum_{t=0}^{T-1}\|\nabla\Phi_{1/2L}(x^t)\|^2.
\end{equation*}
Applying Theorem \ref{ncc moreau complexity},
\begin{equation*}
    \frac{1}{T}\sum_{t=1}^T \|\nabla\Phi(x^{t+1})\|^2 \leq \frac{268L}{5T}[\Phi(x^0)-\Phi^*] + \frac{28L}{5T}\|y^0-y^*(x^0)\|^2.
\end{equation*}
\end{proof}

\subsection{Complexity of solving auxiliary problem (\ref{auxiliary prob ncc}) and proof of Theorem \ref{thm catalyst scsc}}

In this layer, we apply an inexact proximal point algorithm to solve the $(L,\mu)$-SC-SC and $3L$-smooth auxiliary problem: $\min_x\max_y \hat{f}_t(x,y)$. Throughout this subsection, we suppress the outer-loop index $t$ without confusion, i.e. we use $\hat{f}$ instead of $\hat{f}_t$ and $\Tilde{f}_{k} = \hat{f}(x,y) - \frac{\tau}{2}\Vert y - z_k\Vert^2$ instead of $\Tilde{f}_{t,k}$. Accordingly, we also omit the superscript in $(x^t_k, y^t_k)$ and $\epsilon^t_k$.

Before we prove Theorem \ref{thm catalyst scsc}, we present a lemma from \citep{lin2018catalyst}. The inner loop to solve (\ref{auxiliary prob ncc}) can be considered as applying Catalyst for strongly-convex minimization in \citep{lin2018catalyst} to the function $-\hat{\Psi}(y) = -\min_{x\in\mathbb{R}^{d}}\hat{f}(x, y)$. The following lemma captures the convergence of Catalyst framework in minimization, which we present in Algorithm \ref{catalyst min}. 

\begin{algorithm}[ht] 
    \caption{ Catalyst for Strongly-Convex Minimization}
    \setstretch{1.25}
    \begin{algorithmic}[1] \label{catalyst min}
        \REQUIRE function $h$, initial point $x_0$, strong-convexity constant $\mu$, parameter $\tau>0$
        \STATE Initialization: $q = \frac{\mu}{\mu +\tau},  z_1 = x_0$,  $\alpha_1 = \sqrt{q}$.
        \FORALL{$k = 1,2,..., K$}
            \STATE Find an inexact solution $x_k$ to the following problem with algorithm $\mathcal{M}$ 
            \begin{equation*} 
                \min_{x\in\mathbb{R}^d}\Tilde{h}_k(x) \triangleq \left[h(x) + \frac{\tau}{2}\Vert x - z_k\Vert^2 \right]
            \end{equation*}
            % \vspace{-2mm}
            such that 
            \begin{align} 
                \Tilde{h}_k(x_k) -  \min_{x\in\mathbb{R}^d}\Tilde{h}_k(x)\leq \epsilon_k.
            \end{align}
            % \vspace{-3mm}
            \STATE Choose $\alpha_{k+1}\in [0,1]$ such that 
           $
                \alpha_{k+1}^2 = (1-\alpha_{k+1})\alpha_k^2 + q\alpha_{k+1}.
           $
           \STATE $z_{k+1} = x_k + \beta_k(x_k-x_{k-1})$ where $\beta_k = \frac{\alpha_k(1-\alpha_k)}{\alpha_k^2 + \alpha_{k+1}}.$
        \ENDFOR
        \ENSURE $x_K$.
    \end{algorithmic}
\end{algorithm}

\begin{lemma} [\citep{lin2018catalyst}] \label{lemma catalyst min}
Consider the problem $\min_{x\in\mathbb{R}^d} h(x)$. Assume function $h$ is $\mu$-strongly convex. Define $A_k = \prod_{i=1}^k (1-\alpha_i)$, $\eta_k = \frac{\alpha_k-q}{1-q}$ and a sequence $\{v_t \}_t$ with $v_0 = x_0$ and $v_k = x_{k-1} + \frac{1}{\alpha_k}(x_k- x_{k-1})$ for $k>1$. Consider the potential function: $S_k = h(x_k) - h(x^*) + \frac{\eta_{k+1}\alpha_{k+1}\tau}{2(1-\alpha_{k+1})}\|x^*-v_k\|^2$, where $x^*$ is the optimal solution. After running Algorithm \ref{catalyst min} for $K$ iterations, we have
\begin{equation} 
    \frac{1}{A_K}S_K \leq  \left(\sqrt{S_0}+ 2\sum_{t=1}^K \sqrt{\frac{\epsilon_k}{A_k}}\right)^2.
\end{equation}

\end{lemma}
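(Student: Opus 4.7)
The proof I have in mind follows the classical estimate-sequence analysis of Nesterov's accelerated proximal method, extended to the inexact setting via a square-root telescoping device of the sort introduced by Schmidt, Le Roux and Bach for inexact accelerated gradient methods.

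First, I would associate to Algorithm \ref{catalyst min} an estimate sequence of quadratic lower models $\phi_k(x) = \phi_k^* + \frac{\gamma_k}{2}\|x - v_k\|^2$, where $\gamma_k$ follows an appropriate recursion tied to $\mu$ and $\tau$, and $v_k$ is the canonical center of the estimate sequence, which explains the choices $v_0 = x_0$ and $v_k = x_{k-1} + \alpha_k^{-1}(x_k - x_{k-1})$ appearing in the lemma statement. The momentum recursion $\alpha_{k+1}^2 = (1-\alpha_{k+1})\alpha_k^2 + q\alpha_{k+1}$ and the extrapolation $z_{k+1} = x_k + \beta_k(x_k - x_{k-1})$ are then designed so that, in the \emph{exact} case ($\epsilon_k = 0$), the invariant $\phi_k^* \geq h(\hat{x}_k)$ holds for the exact prox iterate $\hat{x}_k \triangleq \argmin_x \Tilde{h}_k(x)$.

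Second, I would establish the exact-case potential descent $S_k \leq (1-\alpha_k)\,S_{k-1}$, equivalently $S_k/A_k \leq S_{k-1}/A_{k-1}$, by combining the defining inequalities of the estimate sequence with the $(\mu+\tau)$-strong convexity of $\Tilde{h}_k$ and the identities linking $v_k$, $z_{k+1}$, and $x_k$. Third, to incorporate inexactness, the $(\mu+\tau)$-strong convexity of $\Tilde{h}_k$ yields $\|x_k - \hat{x}_k\|^2 \leq 2\epsilon_k/(\mu+\tau)$; propagating this perturbation through the descent inequality produces cross-terms which, by Young's inequality, assemble into a perfect-square form
\begin{equation*}
    S_k / A_k \;\leq\; \left(\sqrt{S_{k-1}/A_{k-1}} \;+\; 2\sqrt{\epsilon_k / A_k}\right)^{2}.
\end{equation*}

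Taking square roots and telescoping from $k=1$ to $K$ yields $\sqrt{S_K/A_K} \leq \sqrt{S_0} + 2\sum_{k=1}^{K} \sqrt{\epsilon_k/A_k}$, which squares to the stated bound. The main obstacle is the third step: identifying the correct composite potential $S_k$, including precisely the quadratic-distance term $\frac{\eta_{k+1}\alpha_{k+1}\tau}{2(1-\alpha_{k+1})}\|x^* - v_k\|^2$ appearing in the lemma statement, so that the inexact-proximal error enters the descent inequality exactly in the perfect-square form required for the $\sqrt{\cdot}$ telescoping. This demands careful bookkeeping of the cross-terms between the function-value gap $h(x_k) - h(x^*)$, the quadratic distance $\|x^* - v_k\|^2$, and the subproblem residual $\epsilon_k$, and is the key technical insight distinguishing the inexact analysis from the standard exact-case Nesterov argument.
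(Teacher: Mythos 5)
There is no in-paper proof to compare against here: the lemma is imported verbatim from \citep{lin2018catalyst} (the convergence analysis of inexact accelerated proximal point iterations for strongly convex minimization), and the paper's ``proof'' is the citation itself. Your outline does track the strategy of that reference: an approximate estimate sequence with centers $v_k$, the exact-case contraction $S_k \le (1-\alpha_k)S_{k-1}$ (equivalently monotonicity of $S_k/A_k$), and the Schmidt--Le Roux--Bach square-root device to absorb the subproblem errors $\epsilon_k$. So the approach is the right one.

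As written, however, your proposal is a plan rather than a proof, and the gap sits exactly where you flag it. You assert, but never derive, the per-step perfect square $S_k/A_k \le \bigl(\sqrt{S_{k-1}/A_{k-1}} + 2\sqrt{\epsilon_k/A_k}\bigr)^2$. That is not what the estimate-sequence computation directly yields: propagating the inexactness $\Tilde{h}_k(x_k)-\min_x\Tilde{h}_k(x)\le\epsilon_k$ through the descent inequality produces something of the form $S_k \le (1-\alpha_k)S_{k-1} + \epsilon_k + O\bigl(\sqrt{\epsilon_k\,\gamma_k}\,\|v_k - x^*\|\bigr)$, whose cross-term involves $\|v_k-x^*\|$ and hence $\sqrt{S_k}$ itself, so the inequality is implicit in $S_k$. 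Closing it requires either solving the resulting quadratic in $\sqrt{S_k/A_k}$ at each step (which is where the factor $2$ in front of $\sum_k\sqrt{\epsilon_k/A_k}$ comes from) or, as \citep{lin2018catalyst} actually does, accumulating all steps into one inequality of the form $u_K^2 \le S_0 + \sum_i\lambda_i u_i$ and invoking the recursion lemma of Schmidt, Le Roux and Bach to conclude $u_K\le\sqrt{S_0}+\sum_i\lambda_i$. Without carrying out that derivation, neither the constant $2$ nor the specific coefficient $\frac{\eta_{k+1}\alpha_{k+1}\tau}{2(1-\alpha_{k+1})}$ in the potential is justified. Since the lemma is a quoted external result, citing \citep{lin2018catalyst} suffices for the paper; but as a standalone argument your text leaves the decisive step unproved.
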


\bigskip
Before we step into the proof of Theorem \ref{thm catalyst scsc}, we introduce several notations. We denote the dual function of $\hat{f}$ by $\hat{\Psi}(y) = \min_x \hat{f}(x, y)$. We denote the dual function of $\Tilde{f}_k(x,y)$ by  $\Tilde{\Psi}_k(y) =\min_x\Tilde{f}_k(x,y)= \min_{x}\hat{f}(x, y) - \frac{\tau}{2}\Vert y-z_k\Vert^2 = \hat{\Psi}(y) - \frac{\tau}{2}\Vert y-z_k\Vert^2$.  Let $y_k^* = \argmax_y \Tilde{\Psi}_k(y)$. We also define $(x^*, y^*)$ as the optimal solution to $\min_x\max_y \hat{f}(x,y)$

\bigskip

{\noindent \textbf{Proof of Theorem \ref{thm catalyst scsc}}}\\
\begin{proof}
When the criterion $\|\nabla \Tilde{f}_{k}(x^{k}, y^{k})\|^2 \leq \epsilon_k$ is satisfied, by Lemma \ref{criterion relation}, $$\gap_{\Tilde{f}_k}(x_k, y_k) \leq \frac{1}{2\mu}\|\nabla \Tilde{f}_{k}(x^{k}, y^{k})\|^2 \leq \frac{1}{2\mu}\epsilon_k =\frac{\sqrt{2}}{4}(1-\rho)^k\gap_{\hat{f}}(x_0, y_0) =\hat{\epsilon}_k,$$
where we define $\hat{\epsilon}_k = \frac{\sqrt{2}}{4}(1-\rho)^k\gap_{\hat{f}}(x_0, y_0) $.

The auxiliary problem (\ref{subprob}) can be considered as $\max_{y} \hat{\Psi}(y)$. We see $\gap_{\Tilde{f}_k}(x_k, y_k) \leq \hat{\epsilon}_k$ implies $\max_{y} \Tilde{\Psi}_k(y) - \Tilde{\Psi}_k(y_k) \leq \hat{\epsilon}_k$. By choosing $\alpha_1 = \sqrt{q}$ in Algorithm \ref{catalyst min}, it is easy to check that $\alpha_k = \sqrt{q}$ and $\beta_k = \frac{\sqrt{q}-q}{\sqrt{q}+q}$, for all $k$.  So this inner loop can be considered as applying Algorithm \ref{catalyst min} to $-\Tilde{\Psi}(y)$ and Lemma \ref{lemma catalyst min} can guarantee the convergence of the dual function. Define $S_k = \hat{\Psi}(y^*) - \hat{\Psi}(y_k) + \frac{\eta_{t+1}\alpha_{t+1}\tau}{2(1-\alpha_{t+1})}\|y^*-v_k\|^2$ with $\eta_k = \frac{\alpha_k-q}{1-q}$. Lemma \ref{lemma catalyst min} gives rise to 
\begin{equation}  \label{dec 5}
    \frac{1}{A_K}S_K \leq \left(\sqrt{S_0}+ 2\sum_{k=1}^K \sqrt{\frac{\hat{\epsilon}_k}{A_k}}\right)^2.
\end{equation}
Note that $A_k = \prod_{i=1}^k (1-\alpha_i) = (1-\sqrt{q})^k$ and 
$$\frac{\eta_k\alpha_k\tau}{2(1-\alpha_k)} = \frac{\sqrt{q}-q}{1-q}\frac{\sqrt{q}\tau}{2(1-\sqrt{q})} = \frac{\sqrt{q}-q}{\tau/(\mu+\tau)}\frac{\sqrt{q}\tau}{2(1-\sqrt{q})} = \frac{q(\mu+\tau)}{2} = \frac{\mu}{2}.$$
So $S_0 = \hat{\Psi}(y^*) - \hat{\Psi}(y_0) + \frac{\mu}{2}\|y^* - y_0\|^2 \leq 2(\hat{\Psi}(y^*) - \hat{\Psi}(y_0))$. Then with $\epsilon_k = \frac{\sqrt{2}}{4}(1-\rho)^k\gap_{\hat{f}}(x_0, y_0)$, and we have
\begin{align}
    \text{Right-hand side of } (\ref{dec 5}) \leq &\left( \sqrt{2(\hat{\Psi}(y^*) - \hat{\Psi}(y_0))}  + \sum_{t=1}^T\sqrt{2\left(\frac{1-\rho}{1-\sqrt{q}}\right)^t\gap_{\hat{f}}(x_0, y_0)}\right)^2 \\
    \leq & 2\left(1+ \sum_{k=1}^K\left( \sqrt{\frac{1-\rho}{1-\sqrt{q}}}\right)^k \right)^2 \gap_{\hat{f}}(x_0, y_0) \\
    \leq & 2\left( \frac{\left(\sqrt{\frac{1-\rho}{1-\sqrt{q}}}\right)^{K+1}}{\sqrt{\frac{1-\rho}{1-\sqrt{q}}}-1} \right)^2 \gap_{\hat{f}}(x_0, y_0) \leq 2\left( \frac{\sqrt{\frac{1-\rho}{1-\sqrt{q}}}}{\sqrt{\frac{1-\rho}{1-\sqrt{q}}}-1} \right)^2 \left( \frac{1-\rho}{1-\sqrt{q}} \right)^K \gap_{\hat{f}}(x_0, y_0).
\end{align}
Plugging back into (\ref{dec 5}),
\begin{equation} \label{dec 6}
    S_K \leq  2\left( \frac{1}{\sqrt{1-\rho} - \sqrt{1-\sqrt{q}}} \right)^2 (1-\rho)^{K+1} \gap_{\hat{f}}(x_0, y_0) \leq  \frac{8}{(\sqrt{q}-\rho)^2}(1-\rho)^{K+1} \gap_{\hat{f}}(x_0, y_0),
\end{equation}
where the second inequality is due to $\sqrt{1-x} + \frac{x}{2}$ is decreasing in $[0,1]$. Note that 
\begin{align} \nonumber
    \|x_K-x^*\|^2 \leq & 2\|x_K - x^*(y_K)\|^2 + 2\|x^*(y_K)-x^*(y^*)\|^2 \\ \nonumber
    \leq &\frac{4}{L}[\hat{f}(x_K, y_K) - \hat{f}(x^*(y_K),y_K)] + 18\|y_K-y^*\|^2 \\
    \leq & \frac{4}{L}\hat{\epsilon}_K + 18\|y_K-y^*\|^2.
\end{align}
where in the second inequality we use Lemma \ref{lin's lemma}. Then,
\begin{align}
     \|x_K-x^*\|^2 +  \|y_K-y^*\|^2 \leq 19\|y_K-y^*\|^2 +\frac{4}{L}\hat{\epsilon}_K.
\end{align}
Because $\|y_K-y^*\|^2 \leq \frac{2}{\mu}[\hat{\Psi}(y^*)-\hat{\Psi}(y_K)] \leq \frac{2}{\mu}S_K$,  by plugging in (\ref{dec 6}) and the definition of $\hat{\epsilon}_k$, we get 
\begin{align} \nonumber
    \Vert x_K-x^*\Vert^2 + \Vert y_K- y^*\Vert^2 \leq \left( \frac{306}{\mu(\sqrt{q}-\rho)^2} + \frac{\sqrt{2}}{L} \right)(1-\rho)^{K}\gap_{\hat{f}}(x_0, y_0). 
\end{align}
By Lemma \ref{criterion relation}, we have 
$$
\Vert x_K-x^*\Vert^2 + \Vert y_K- y^*\Vert^2 \geq \frac{1}{36L^2}\|\nabla \hat{f}(x_{K}, y_{K})\|^2 
\quad \text{ and }\quad
\gap_{\hat{f}}(x_0, y_0) \leq \frac{1}{2\mu}\|\nabla \hat{f}(x_{0}, y_{0})\|^2.
$$ 
Then we finish the proof.
\end{proof}

\subsection{Complexity of solving subproblem (\ref{subprob}) and proof of Theorem \ref{thm catalyst inner}}

As in the previous subsection, we suppress the outer-loop index $t$. Define $\hat{\Phi}(x) = \max_y\hat{f}(x,y)$, $\hat{\Psi}(y) = \min_x \hat{f}(x,y)$ and $\hat{\Phi}^* = \min_x \hat{\Phi}(x) = \max_y\hat{\Psi}(y) = \hat{\Psi}^*$. We still define $\Tilde{\Psi}_k(y) =\min_x\Tilde{f}_k(x,y)= \min_{x}\hat{f}(x, y) - \frac{\tau}{2}\Vert y-z_k\Vert^2 = \hat{\Psi}(y) - \frac{\tau}{2}\Vert y-z_k\Vert^2$, and $\Tilde{\Phi}_k(x) = \max_y \Tilde{f}_k(x,y)$. Let $(x^*, y^*)$ be the optimal solution to $\min_x\max_y \hat{f}(x,y)$ and $(x_k^*, y_k^*)$  the optimal solution to $\min_x\max_y \Tilde{f}_k(x,y)$. Also, in this subsection, we denote $x^*(y) = \argmin_x \hat{f}(x, y)$ and $y^*(x) = \argmax_y \hat{f}(x,y)$. Recall that we defined a potential function $S_k = \hat{\Psi}(y^*) - \hat{\Psi}(y_k) + \frac{\mu}{2}\|y^*-v_k\|^2$ in the proof of Theorem \ref{thm catalyst scsc}.

The following lemma shows that the initial point we choose to solve (\ref{subprob}) for $\mathcal{M}$ at iteration $k$ is not far from the optimal solution of (\ref{subprob}) if the stopping criterion is satisfied for every iterations before $k$.

\begin{lemma} [Initial distance of the warm-start] \label{warm start scsc}
Under the same assumptions as Theorem \ref{thm catalyst scsc}, with accuracy $\epsilon_k$ specified in Theorem \ref{thm catalyst scsc}, we assume that for $\forall i<k$, $\|\nabla \Tilde{f}_{i}(x_{i}, y_{i})\|^2 \leq \epsilon_i$.
%we solve the subproblem (\ref{subprob}) till $\epsilon^{(t)}$ specified in Theorem \ref{thm catalyst scsc} for each $t$. As we set the 
At iteration $k$, solving the subproblem \eqref{subprob} from initial point $(x_{k-1}, y_{k-1})$, we have 
\begin{equation*}
    \|x_{k-1} - x_k^*\|^2 +  \|y_{k-1} - y_k^*\|^2 \leq C_k \epsilon_k,
\end{equation*}
where $C_1 = \left[ \frac{72\sqrt{2}}{\mu^2} + \frac{74\sqrt{2}}{(2\tau+\mu)\mu}\right] \frac{1}{1-\rho}$, $C_t = \frac{2}{\mu\min\{L, \mu+\tau \}}\frac{1}{1-\rho} + \frac{288\sqrt{2}\tau^2\max\{40L^2, 9\tau^2 + 4L^2 \}}{(\mu+\tau)^2L^2\mu^2(\sqrt{q}-\rho)^2)} \frac{1}{(1-\rho)^2}   $ for $t>1$.
\end{lemma}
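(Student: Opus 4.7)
The plan is to split the warm-start distance through the saddle point $(x_{k-1}^*, y_{k-1}^*)$ of the previous subproblem, writing
\begin{equation*}
\|x_{k-1} - x_k^*\|^2 + \|y_{k-1} - y_k^*\|^2 \leq 2\bigl[\|x_{k-1} - x_{k-1}^*\|^2 + \|y_{k-1} - y_{k-1}^*\|^2\bigr] + 2\bigl[\|x_{k-1}^* - x_k^*\|^2 + \|y_{k-1}^* - y_k^*\|^2\bigr].
\end{equation*}
The first bracket captures the inexactness of the $(k{-}1)$-th solve, and the second measures how far the saddle point moves when the prox-center is updated from $z_{k-1}$ to $z_k$. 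Since $\tilde f_{k-1}$ is $(L,\mu+\tau)$-SC-SC, the stopping criterion $\|\nabla\tilde f_{k-1}(x_{k-1},y_{k-1})\|^2\leq\epsilon_{k-1}$ together with Lemma \ref{criterion relation}(b) and (c) yields a bound of order $\epsilon_{k-1}/[\mu\min\{L,\mu+\tau\}]$ for the first bracket; combined with $\epsilon_{k-1}=\epsilon_k/(1-\rho)$, this already accounts for the first summand in $C_k$.

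For the second bracket I use the sensitivity of the saddle point to the prox-center. The maximizer $y_k^*$ of the strongly concave function $\tilde\Psi_k(y)=\hat\Psi(y)-\tfrac{\tau}{2}\|y-z_k\|^2$ satisfies $\nabla\hat\Psi(y_k^*)=\tau(y_k^*-z_k)$; subtracting the analogous identity for $k{-}1$, taking an inner product with $y_k^*-y_{k-1}^*$, and invoking the $\mu$-strong concavity of $\hat\Psi$ from Lemma \ref{lin's lemma} gives $\|y_k^*-y_{k-1}^*\|\leq \tfrac{\tau}{\mu+\tau}\|z_k-z_{k-1}\|$. Since $x_k^*=\argmin_x\hat f(x,y_k^*)$ and $\hat f$ is $L$-strongly convex in $x$ and $3L$-smooth, Lemma \ref{lin's lemma}(c) makes $x^*(\cdot)$ Lipschitz with constant $3$, so $\|x_k^*-x_{k-1}^*\|\leq 3\|y_k^*-y_{k-1}^*\|$. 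It then remains to control $\|z_k-z_{k-1}\|^2$: expanding the Nesterov-type update $z_k=y_{k-1}+\tfrac{\sqrt q-q}{\sqrt q+q}(y_{k-1}-y_{k-2})$ and applying the triangle inequality through $y^*$ reduces the problem to bounding $\|y_{k-j}-y^*\|^2$ for $j=1,2,3$. These are exactly the quantities estimated by the potential $S_k$ inside the proof of Theorem \ref{thm catalyst scsc}: under the standing hypothesis $\|\nabla\tilde f_i(x_i,y_i)\|^2\leq\epsilon_i$ for $i<k$, that argument gives $S_k\leq \tfrac{8}{(\sqrt q-\rho)^2}(1-\rho)^{k+1}\gap_{\hat f}(x_0,y_0)$ and hence $\|y_k-y^*\|^2\leq 2S_k/\mu$. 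Re-expressing $\gap_{\hat f}(x_0,y_0)$ in terms of $\epsilon_k$ via $\epsilon_k=\tfrac{\sqrt 2\mu}{2}(1-\rho)^k\gap_{\hat f}(x_0,y_0)$ yields a bound of order $\tfrac{\epsilon_k}{\mu^2(\sqrt q-\rho)^2(1-\rho)^2}$, which after multiplication by the $\tfrac{\tau^2}{(\mu+\tau)^2}$ Lipschitz factor (and its variant from bounding $\|x_k^*-x_{k-1}^*\|$ through the smoothness of $\hat f$ rather than the Lipschitz constant of $x^*(\cdot)$) recovers the second summand of $C_k$.

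The base case $k=1$ is simpler: since $z_1=y_0$, one checks directly that $\gap_{\tilde f_1}(x_0,y_0)\leq\gap_{\hat f}(x_0,y_0)$, and applying Lemma \ref{criterion relation}(c) to the $(L,\mu+\tau)$-SC-SC function $\tilde f_1$ together with $\epsilon_1=\tfrac{\sqrt 2\mu}{2}(1-\rho)\gap_{\hat f}(x_0,y_0)$ matches the claimed expression for $C_1$. The main technical obstacle lies in the fourth step above: the Nesterov extrapolation couples $z_k$ to three successive iterates, so a naive triangle inequality would lose extra powers of $(1-\rho)^{-1}$, and one has to align the estimates so that the geometric rate from the potential $S_k$ propagates cleanly; the somewhat awkward constant $\max\{40L^2,9\tau^2+4L^2\}$ appearing in $C_k$ is precisely what results from taking the better of the two routes to bound $\|x_k^*-x_{k-1}^*\|$ (through $x^*(\cdot)$ or through the joint smoothness of $\hat f$).
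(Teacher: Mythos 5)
Your argument for $k>1$ is essentially the paper's own proof: the same splitting through the previous saddle point $(x_{k-1}^*,y_{k-1}^*)$ with the first bracket controlled by the stopping criterion and Lemma \ref{criterion relation}, the same sensitivity bound $\|y_k^*-y_{k-1}^*\|\le\frac{\tau}{\mu+\tau}\|z_k-z_{k-1}\|$ obtained from the optimality conditions of $\tilde{\Psi}_k,\tilde{\Psi}_{k-1}$ plus strong concavity of $\hat{\Psi}$, and the same control of $\|z_k-z_{k-1}\|^2$ via the potential $S_k$ from the proof of Theorem \ref{thm catalyst scsc}; your bookkeeping of the $(1-\rho)^{-1}$ and $(1-\rho)^{-2}$ factors is consistent with the stated $C_t$. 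Where you genuinely diverge is the base case: since $z_1=y_0$ gives $\gap_{\tilde{f}_1}(x_0,y_0)\le\gap_{\hat{f}}(x_0,y_0)$, applying Lemma \ref{criterion relation}(c) to the $(L,\mu+\tau)$-SC-SC function $\tilde{f}_1$ yields $\|x_0-x_1^*\|^2+\|y_0-y_1^*\|^2\le\frac{2\sqrt{2}}{\mu\min\{L,\mu+\tau\}(1-\rho)}\epsilon_1$, which is \emph{tighter} than the stated $C_1$ (the paper instead detours through the saddle point of $\hat{f}$ and the strong concavity of $\tilde{\Psi}_1$), so your base case is valid even though it does not literally ``match'' $C_1$. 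The one loose end is $k=2$: the extrapolation $z_2-z_1$ involves only $y_1$ and $y_0$, so your reference to $\|y_{k-j}-y^*\|^2$ for $j=1,2,3$ would point to a nonexistent $y_{-1}$; one replaces the missing potential term by $\gap_{\hat{f}}(x_0,y_0)$ via $\|y_0-y^*\|^2\le\frac{2}{\mu}\gap_{\hat{f}}(x_0,y_0)$, exactly as the paper does in its separate $k=2$ case, and the same constant survives.
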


\begin{proof} We separate the proof into two cases: $k=1$ and $k>1$. \\
\textbf{Case $k=1$: } 
Note that $z_1 = y_0$, and therefore the subproblem at the first iteration is
\begin{equation} 
\min_{x}\max_{y}\left[  \Tilde{f}_1(x, y)  = \hat{f}(x,y) - \frac{\tau}{2}\|y-y_0\|^2\right].
\end{equation}
Since $x_1^* = \argmin_x \Tilde{f}_1(x, y_1^*) = \argmin_x \hat{f}(x, y_1^*)$ and $x^* = \argmin_x \hat{f}(x, y^*)$, by Lemma \ref{lin's lemma} we have $\|x^* - x_1^*\| \leq 3\|y^* - y_1^*\|$. Furthermore, 
\begin{align}\nonumber
     \|x_0 - x_1^*\|^2+\|y_0-y_1^*\|^2 \leq & 2\|x_0-x^*\|^2 + 2\|x^* - x_1^*\|^2 + \|y_0-y_1^*\|^2  \\   \nonumber
    \leq & 2\|x_0-x^*\|^2 + 18\|y^* - y_1^*\|^2 + \|y_0-y_1^*\|^2 \\ \nonumber
    \leq & 2\|x_0-x^*\|^2 + 36\|y_0 - y^*\|^2 + 37\|y_0 - y_1^*\|^2 \\ \label{initial dist iter 1}
    \leq & \frac{72}{\mu}\gap_{\hat{f}}(x_0, y_0) + 37\|y_0 - y_1^*\|^2,
\end{align}
where in the last inequality we use Lemma \ref{criterion relation}. It remains to bound $\|y_0 - y_1^*\|$. Since $\hat{\Psi}(y) - \frac{\tau}{2}\|y - y_0\|^2$ is $(\mu+\tau)$ strongly-concave w.r.t.~$y$, we have
\begin{equation}
    \left( \hat{\Psi}(y_1^*) - \frac{\tau}{2}\|y_1^* - y_0\|^2 \right) - \frac{\tau+\mu}{2}\|y_1^* - y_0\|^2 \geq \hat{\Psi}(y_0) = \hat{\Psi}^* - [\hat{\Psi}^* - \hat{\Psi}(y_0)] \geq \hat{\Psi}(y_1^*)  - [\hat{\Psi}^* - \hat{\Psi}(y_0)],
\end{equation}
It further implies
\begin{equation}
    \left(\tau + \frac{\mu}{2}\right) \|y_1^* - y_0\|^2 \leq \hat{\Psi}^* - \hat{\Psi}(y_0) \leq \gap_{\hat{f}}(x_0, y_0).
\end{equation}
Plugging back into (\ref{initial dist iter 1}), we have
\begin{align*}
    \|x_0 - x_1^*\|^2+\|y_0-y_1^*\|^2 \leq & \left[ \frac{72}{\mu} + \frac{74}{2\tau+\mu}\right]\gap_{\hat{f}}(x_0, y_0) \\
    \leq &  \left[ \frac{72\sqrt{2}}{\mu^2} + \frac{74\sqrt{2}}{(2\tau+\mu)\mu}\right] \frac{1}{1-\rho}\epsilon_1.
\end{align*}

{\noindent\textbf{Case $k>1$:}} From the proof of Theorem \ref{thm catalyst scsc}, we see that $\|\nabla \Tilde{f}_{i}(x^t_{i}, y_{i})\|^2 \leq \epsilon_i$ implies $\gap_{\Tilde{f}_i}(x_i, y_i) \leq \hat{\epsilon}_i$ where $\hat{\epsilon}_i =\frac{\sqrt{2}}{4}(1-\rho)^i\gap_{\hat{f}}(x_0, y_0)$. 
Note that $\Tilde{f}_k$ is $(L, \mu+\tau)$-SC-SC and $(L+\max\{2L,\tau\})$-smooth. Then
\begin{equation} \label{initial dist x}
    \begin{split}
        \Vert x_{k-1} - x_k^*\Vert^2 
        \leq\ &
        2\Vert x_{k-1} - x^*(y_{k-1}^*)\Vert^2 + 2\Vert x^*(y_{k-1}^*)-x^*(y_k^*)\Vert^2 \\
        \leq\ & 
        2\Vert x_{k-1} - x_{k-1}^*\Vert^2 + 2\left(\frac{L+\max\{2L,\tau\}}{L}\right)^2 \Vert y_k^* - y_{k-1}^*\Vert^2.
    \end{split}
\end{equation}
Furthermore,
\begin{align} \nonumber
   & \|x_{k-1} - x_k^*\|^2+ \|y_{k-1}- y_k^*\|^2 \leq \|x_{k-1} - x_k^*\|^2+ 2\|y_{k-1}- y_{k-1}^*\|^2 +2\|y_{k-1}^*- y_k^*\|^2\\ \nonumber
   \leq & 2\Vert x_{k-1} - x_{k-1}^*\Vert^2 + 2\|y_{k-1}- y_{k-1}^*\|^2 + 2\left[\left(\frac{L+\max\{2L,\tau\}}{L}\right)^2+1\right] \Vert y_k^* - y_{k-1}^*\Vert^2 \\\label{initial dist bound}
   \leq & \frac{4\hat{\epsilon}_{k-1}}{\min\{L, \mu+\tau \}} + \max\left\{20, \frac{9\tau^2}{2L^2}+2 \right\} \Vert y_k^* - y_{k-1}^*\Vert^2.
\end{align}
Now we want to bound $\|y_{k-1}^* - y_k^*\|$. By optimality condition, we have for $\forall y$,
\begin{equation}
    (y-y_k^*)^\top \nabla \Tilde{\Psi}_t(y_k^*) \leq 0, \quad (y-y_{k-1}^*)^\top \nabla \Tilde{\Psi}_{t-1}(y_{k-1}^*) \leq 0.
\end{equation}
Choose $y$ in the first inequality to be $y_{k-1}^*$, $y$ in the second inequality to be $y_k^*$, and sum them together, we have 
\begin{equation}
    (y_k^* - y_{k-1}^*)^\top (\nabla \Tilde{\Psi}_{k-1}(y_{k-1}^*) - \nabla \Tilde{\Psi}_k (y_k^*))\leq 0.
\end{equation}
Using $\nabla \Tilde{\Psi}_k(y) = \nabla_y \hat{f}(x^*(y), y) - \tau(y-z_k)$, we have
\begin{equation} \label{y optimality}
    (y_k^* - y_{k-1}^*)^\top (\nabla_y \hat{f}(x^*(y_{k-1}^*), y_{k-1}^*) - \tau(y_{k-1}^* - z_{k-1}) - \nabla_y \hat{f}(x^*(y_k^*), y_k^*) + \tau(y_k^* -z_k))\leq 0.
\end{equation}
By strong concavity of $\hat{\Psi}(y) = \max_{x}\hat{f}(x, y)$, we have 
\begin{equation}
    (y_k^* - y_{k-1}^*)^\top (\nabla \hat{\Psi}(y_k^*) - \nabla \hat{\Psi}(y_{k-1}^*)) \leq -\mu\|y_k^* - y_{k-1}^*\|^2.
\end{equation}
Adding to (\ref{y optimality}), we have
\begin{equation}
    (y_k^* - y_{k-1}^*)^\top [\tau(y_k^* - z_k) - \tau(y_{k-1}^* - z_{k-1})]\leq -\mu\|y_k^* - y_{k-1}^*\|^2
\end{equation}
Rearranging, 
\begin{equation}
    \frac{\tau}{\mu+\tau}(y_k^*-y_{k-1}^*)^\top(z_{k-1}-z_k) \geq \|y_k^* - y_{k-1}^*\|^2. 
\end{equation}
Further with $(y_k^*-y_{k-1}^*)^\top(z_{k-1}-z_k) \leq \|y_k^*-y_{k-1}^*\|\|z_{k-1}-z_k\| $, we have 
\begin{equation}  \label{optimal y dist bound by z}
    \|y_k^* - y_{k-1}^*\| \leq \frac{\tau}{\mu+\tau}\|z_{k-1}-z_k\|.
\end{equation}
From updates of $\{z_k\}_k$, we have for $t>2$
\begin{align} \nonumber 
    \|z_k - z_{k-1}\| = &\left\Vert y_{k-1} + \frac{\sqrt{q}-q}{\sqrt{q}+q}(y_{k-1}-y_{k-2}) - y_{k-2} - \frac{\sqrt{q}-q}{\sqrt{q}+q}(y_{k-2}-y_{k-3})\right\Vert \\ \nonumber 
    \leq & \left(1+\frac{\sqrt{q}-q}{\sqrt{q}+q}\right)\|y_{k-1} - y_{k-2}\| + \frac{\sqrt{q}-q}{\sqrt{q}+q}\|y_{k-2}-y_{k-3}\| \\ \nonumber 
    \leq & 2\|y_{k-1} - y_{k-2}\| + \|y_{k-2}-y_{k-3}\| \\  
    \leq & 6\max\{\|y_{k-1} - y^*\|, \|y_{k-2} - y^*\| , \|y_{k-3} - y^*\| \} 
\end{align}
Therefore,
\begin{align*}
    \|z_k - z_{k-1}\|^2 \leq & 36\max\{\|y_{k-1} - y^*\|^2, \|y_{k-2} - y^*\|^2 , \|y_{k-3} - y^*\|^2 \}  \\
    \leq & \frac{72}{\mu}\max\{\hat{\Psi}(y_{k-1})-\hat{\Psi}^*, \hat{\Psi}(y_{k-2})-\hat{\Psi}^*, \hat{\Psi}(y_{k-3})-\hat{\Psi}^*\} \\
    \leq & \frac{72}{\mu} \max\{S_{k-1}, S_{k-2},S_{k-3}\},
\end{align*}
where in the second inequality we use strongly concavity of $\hat{\Psi}$ and in the last we use $\hat{\Psi}(y_k) - \hat{\Psi}^*\leq S_k$. Combining with (\ref{optimal y dist bound by z}) and (\ref{initial dist bound}), we have 
\begin{equation}
     \|x_{k-1} - x_k^*\|^2+ \|y_{k-1} - y_k^*\|^2 \leq \frac{4\hat{\epsilon}_{k-1}}{\min\{L, \mu+\tau \}} + \frac{36\tau^2\max\{40L^2, 9\tau^2 + 4L^2 \}}{(\mu+\tau)^2L^2\mu} \max\{S_{k-1}, S_{k-2},S_{k-3}\}.
\end{equation}
Plugging in $S_{k} \leq \frac{8}{(\sqrt{q}-\rho)^2}(1-\rho)^{k+1} \gap_{\hat{f}}(x_0, y_0)$ from the proof of Theorem \ref{thm catalyst scsc} and from definition of $\epsilon_k$ and $\hat{\epsilon}_k$, we have 
\begin{equation}
     \|x_{k-1}- x_k^*\|^2+ \|y_{k-1} - y_k^*\|^2 \leq \bigg\{ \frac{2}{\mu\min\{L, \mu+\tau \}}\frac{1}{1-\rho} + \frac{288\sqrt{2}\tau^2\max\{40L^2, 9\tau^2 + 4L^2 \}}{(\mu+\tau)^2L^2\mu^2(\sqrt{q}-\rho)^2)} \frac{1}{(1-\rho)^2}  \bigg\} \epsilon_k.
\end{equation}
It is left to discuss the case $t=2$. Similarly, we have
\begin{equation*} 
    \|z_2 - z_{1}\| = \left\Vert y_{1} + \frac{\sqrt{q}-q}{\sqrt{q}+q}(y_{1}-y_{0}) - y_0\right\Vert   =  \left(1+\frac{\sqrt{q}-q}{\sqrt{q}+q}\right)\|y_{1} - y_{0}\| \leq  4\max\{\|y_{1} - y^*\|, \|y_{0} - y^*\| \} 
\end{equation*}
Then 
\begin{equation*}
    \begin{split}
        & \|z_2 - z_{1}\|^2 \leq  16\max\{\|y_{1} - y^*\|^2, \|y_{0} - y^*\|^2  \}\\  
        \leq\ & 
        \frac{32}{\mu}\max\{\hat{\Psi}(y_{1})-\hat{\Psi}^*, \hat{\Psi}(y_{0})-\hat{\Psi}^* \} 
        \leq  
        \frac{32}{\mu} \max\{S_1, \gap_{\hat{f}}(x_0, y_0)  \},
    \end{split}
\end{equation*}
Combining with (\ref{optimal y dist bound by z}) and (\ref{initial dist bound}), we have 
\begin{equation}
     \|x_1 - x_2^*\|^2+ \|y_1 - y_2^*\|^2 \leq \frac{4\hat{\epsilon}_{1}}{\min\{L, \mu+\tau \}} + \frac{16\tau^2\max\{40L^2, 9\tau^2 + 4L^2 \}}{(\mu+\tau)^2L^2\mu}  \max\{S_1, \gap_{\hat{f}}(x_0, y_0)\}.
\end{equation}
Plugging in $S_{1} \leq \frac{8}{(\sqrt{q}-\rho)^2}(1-\rho)^{2} \gap_{\hat{f}}(x_0, y_0)$ and definition of $\epsilon_2$ and $\hat{\epsilon}_1$, we have 
\begin{equation}
     \|x_1 - x_2^*\|^2+ \|y_1 - y_2^*\|^2 \leq \bigg\{ \frac{2}{\mu\min\{L, \mu+\tau \}}\frac{1}{1-\rho} + \frac{128\sqrt{2}\tau^2\max\{40L^2, 9\tau^2 + 4L^2 \}}{(\mu+\tau)^2L^2\mu^2(\sqrt{q}-\rho)^2)}  \bigg\} \epsilon_2.
\end{equation}

\end{proof}

\iffalse
\begin{lemma}[Inner-loop complexity for SC-SC objectives] \label{inner-loop complexity scsc}
Under the same assumptions as Theorem \ref{thm catalyst scsc}, we assume algorithm $\mathcal{M}$ can solve the subproblem (\ref{subprob}) when $f$ is $(\mu_x,\mu)$-SC-SC and $L$-Lipschitiz at a linear convergence rate depending on $\tau$: (here $(x_k, y_k)$ and $(x^*, y^*)$ denote iterate and optimal solution in solving the subproblem)
\begin{equation} \label{deter M rate}
    \Vert x_k - x^*\Vert^2+\Vert y_k-y^*\Vert^2 
    \leq \left(1-\frac{1}{\Lambda^{\mathcal{M}}_{\mu_x, \mu, L}(\tau)}\right)^k[\Vert x_0-x^*\Vert^2 + \Vert y_0-y^*\Vert^2]
\end{equation}
if $\mathcal{M}$ is deterministic, and 
\begin{equation} \label{stoc M rate}
    \mathbb{E}[\Vert x_k - x^*\Vert^2+\Vert y_k-y^*\Vert^2]\leq \left(1-\frac{1}{\Lambda^{\mathcal{M}}_{\mu_x, \mu, L}(\tau)}\right)^k[\Vert x_0-x^*\Vert^2 + \Vert y_0-y^*\Vert^2]
\end{equation}
if $\mathcal{M}$ is stochastic. Let $K_t(\epsilon)$ denote the number of iterations (expected number of iterations if $\mathcal{M}$ is stochastic) for $\mathcal{M}$ to find a point $(x,y)$ such that $\gap_{\Tilde{f}_t}(x, y) \leq \epsilon$ for subproblem (\ref{subprob}) at $t$-th iteration. Then
$K_t(\epsilon^{(t)})$ is $O \left(\Lambda^{\mathcal{M}}_{L, \mu, \Tilde{L}}(\tau) \log\left(\frac{\max\{1,\ell,\tau\}}{\min\{1,\mu_x,\mu \}}  \right)\right).$
\end{lemma}
\fi

{\noindent\textbf{Proof of Theorem \ref{thm catalyst inner}}}\\
\begin{proof}
We separate our arguments for the deterministic and stochastic settings. Inside this proof, $(x_{(i)}, y_{(i)})$ denotes the $i$-th iterate of $\mathcal{M}$ in solving the subproblem: $\min_x\max_y \Tilde{f}_k(x,y)$. We use $(x_k^*, y_k^*)$ to denote the optimal solution as before. We pick $(x_{(0)}, y_{(0)})$ to be $(x_{k-1}, y_{k-1})$.

 \paragraph{Deterministic setting.} 
 The subproblem is $(L+\max\{2L, \tau\})$-Lipschitz smooth and $(L,\mu+\tau)$-SC-SC. By Lemma \ref{criterion relation}, after $N$ iterations of algorithm $\mathcal{M}$, 
\begin{align*} 
    \|\nabla\Tilde{f}_k(x_{(N)}, y_{(N)})\|^2 &\leq 4(L+\max\{2L, \tau\})^2[\Vert x_{(N)}-x_k^*\Vert^2 + \Vert y_{(N)}-y_k^*\Vert^2] \\  \label{linear convergence after t}
    & \leq 4(L+\max\{2L, \tau\})^2 \left(1-\frac{1}{\Lambda^\mathcal{M}_{ \mu, L}(\tau)}  \right)^N[\Vert x_{k-1}-x_k^*\Vert^2 + \Vert y_{k-1}-y_k^*\Vert^2].
\end{align*}
 
Choosing 
\begin{align*}
N = &\Lambda^{\mathcal{M}}_{\mu, L}(\tau)\log\frac{4(L+\max\{2L, \tau\})^2 (\Vert x_{k-1}-x_k^*\Vert^2 +\Vert y_{k-1}-y_k^*\Vert^2)}{\epsilon_k} \\
\leq &  \Lambda^{\mathcal{M}}_{\mu, L}(\tau)\log\frac{4(L+\max\{2L, \tau\})^2 C_t\epsilon_k}{\epsilon_k} = \Lambda^{\mathcal{M}}_{\mu_x, \mu, L}(\tau)\log\left( 4(L+\max\{2L, \tau\})^2C_t \right) ,
\end{align*}
where $C_t$ is specified in Lemma \ref{warm start scsc},  we have $\|\nabla\Tilde{f}_k(x_{(N)}, y_{(N)})\|^2  \leq \epsilon_k$. 

\paragraph{Stochastic setting.}  With the same reasoning as in deterministic setting and applying Appendix B.4 of \citep{lin2018catalyst}, after
$$N = \Lambda^{\mathcal{M}}_{\mu, L}(\tau)\log\frac{4(L+\max\{2L, \tau\})^2 (\Vert x_{k-1}-x_k^*\Vert^2 +\Vert y_{k-1}-y_k^*\Vert^2)}{\epsilon_k}+1 $$
iterations of $\mathcal{M}$, we have $\|\nabla\Tilde{f}_k(x_{(N)}, y_{(N)})\|^2  \leq \epsilon_k$. 

\end{proof}

\subsection{Total complexity}

\textbf{Proof of Corollary \ref{THM CATALYST TOTAL}}

\begin{proof}
From Theorem \ref{THM CATALYST NCSC}, the number of outer-loop calls to find an $\epsilon$-stationary point of $\Phi$ is $T = O\left( L(\Delta+D_y^0)\epsilon^{-2} \right)$. From Theorem \ref{thm catalyst scsc}, by picking $\rho =0.9\sqrt{q}= 0.9\sqrt{\mu/(\mu +\tau)} $, we have
\begin{equation}
    \|\nabla \hat{f}_t(x^t_{k}, y^t_{k})\|^2  
    \leq \left[  \frac{5508L^2}{\mu^2(\sqrt{q}-\rho)^2} + \frac{18\sqrt{2}L^2}{\mu}\right](1-\rho)^{k}\|\nabla \hat{f}_t(x^t_{0}, y^t_{0})\|^2.
\end{equation}
Therefore, to achieve $ \|\nabla \hat{f}_t(x^{t}_{K}, y^{t}_{K})\|^2 \leq \alpha_t\|\nabla \hat{f}_t(x^t_{0}, y^t_{0})\|^2$, we need to solve (\ref{subprob})
$$K = 0.9\sqrt{(\tau+\mu)/\mu}\log\frac{\left[  \frac{5508L^2}{\mu^2(\sqrt{q}-\rho)^2} + \frac{18\sqrt{2}L^2}{\mu}\right]}{\alpha_t} = O\left(\sqrt{(\tau+\mu)/\mu} \log\left(\frac{\max\{1,L,\tau\}}{\min\{1,\mu \}}  \right)\right)$$
times, where $\alpha_t$ is defined as in Theorem \ref{THM CATALYST NCSC}. Finally, Theorem \ref{thm catalyst inner} implies that solving (\ref{subprob}) needs $N = O\left(\Lambda^{\mathcal{M}}_{\mu, L}(\tau) \log\left(\frac{\max\{1,L,\tau\}}{\min\{1,\mu \}}  \right)\right)$ gradient oracles. The total complexity is .
\begin{equation}
    T\cdot K\cdot N = O\left(\frac{\Lambda^{\mathcal{M}}_{\mu, L}(\tau)L(\Delta+D_y^0) }{\epsilon^2}\sqrt{\frac{\mu+\tau}{\mu}}  \log^2\left(\frac{\max\{1,L,\tau\}}{\min\{1,\mu \}}\right)\right).
\end{equation}

\end{proof}

\end{document}